\documentclass[12pt,twoside]{amsart}
\usepackage{amssymb,amsmath,amsthm}
\usepackage{graphicx}
\usepackage{enumerate}

\voffset=-1cm \oddsidemargin 0cm \evensidemargin 0cm
\textheight 22cm \textwidth 16.5cm

\newcommand{\norm}[1]{\left\lVert #1 \right\rVert}
\def\A{\mathcal A}
\def\B{\mathcal B}
\def\F{\mathcal F}
\def\overF{\overline{\mathcal F}}
\def\G{\mathcal G}
\def\I{\mathcal I}
\def\K{\mathcal K}
\def\mL{\mathcal L}
\def\N{\mathcal N}
\def\Ps{\mathcal P}
\def\W{\mathcal W}
\def\dps{\displaystyle}
\def\ep{\varepsilon}
\newtheorem{theorem}{Theorem}[section]
\newtheorem{lemma}[theorem]{Lemma}
\newtheorem{definition}[theorem]{Definition}
\newtheorem{corollary}[theorem]{Corollary}
\newtheorem{proposition}[theorem]{Proposition}
\newtheorem{remark}[theorem]{Remark}



\begin{document}

\title[Weaker relatives of the BAP for a Banach operator ideal]{Weaker relatives of the bounded approximation property for a Banach operator ideal}

\author{Silvia Lassalle, Eve Oja and Pablo Turco}

\address{Departamento de Matem\'{a}tica, Universidad de San Andr\'{e}s,
Vito Dumas 284, (B1644BID) Victoria, Buenos Aires, Argentina, FCEN - UBA and IMAS - CONICET.}
\email{slassalle@udesa.edu.ar}

\address{Faculty of Mathematics and Computer Science, University of Tartu,
J. Liivi 2, 50409 Tartu, Estonia; Estonian Academy of Sciences, Kohtu 6, 10130
Tallinn, Estonia}
\email{eve.oja@ut.ee}

\address{IMAS - CONICET, Pab I, Facultad de Cs. Exactas y Naturales, Universidad de Buenos
Aires, (1428) Buenos Aires, Argentina.}
\email{paturco@dm.uba.ar}

\begin{abstract} Fixed a Banach operator ideal $\A$, we introduce and investigate two new approximation properties, which are strictly weaker than the bounded approximation property (BAP) for $\A$ of Lima, Lima and Oja (2010). We call them the weak BAP for $\A$ and the local BAP for $\A$, showing that the latter is in turn strictly weaker than the former. Under this framework, we address the question of approximation properties passing from dual spaces to underlying spaces. We relate the weak and local BAPs for $\A$ with approximation properties
given by tensor norms and show that the Saphar BAP of order $p$ is the weak BAP for the ideal of absolutely $p^*$-summing operators, $1\leq p\leq\infty$, $1/p + 1/{p^*}=1$.
\end{abstract}

\keywords{bounded approximation properties, Banach operator ideals}
\subjclass[2010]{46B28, 46B20,  47L05, 47L20}

\maketitle

\section{Introduction}

Let $X$ be a Banach space and let $1\le \lambda <\infty$. We denote by $\A=(\A, \norm{\cdot}_\A)$ a Banach operator ideal. As usual, $\mathcal L, \mathcal F, \overline{\mathcal F}$, $\K$ and $\W$ are the ideals of bounded, finite rank, approximable, compact and weakly compact linear operators, respectively; all considered with the supremum norm $\|\cdot\|$.

Recall that $X$ has the \textit{approximation property} (AP for short) if its identity map $I_X$ can be uniformly approximated by finite rank operators on compact sets, i.e., there exists a net $(S_\alpha)$ in $\F(X):=\F(X;X)$ such that $S_\alpha \to I_X$ uniformly on compact subsets of $X$. If the net $(S_\alpha)$ can be chosen to satisfy also that $\sup_\alpha \|S_\alpha\| \le \lambda$, then $X$ is said to have the $\lambda$-\textit{bounded approximation property} ($\lambda$-BAP). The 1-BAP is called the \textit{metric approximation property} (MAP). If $X$ has the $\lambda$-BAP for some $\lambda$, then $X$ is said to have the \textit{bounded approximation property} (BAP).

In \cite{LO}, Lima and Oja defined the weak BAP and used it, among others, to approach the famous problem: {\it are the {\rm AP} and the {\rm MAP} equivalent on a dual space?}

Recall that $X$ has the \textit{weak $\lambda$-bounded approximation property} (weak $\lambda$-BAP) if for every Banach space $Y$ and for each operator $T$ in $\W(X;Y)$, there exists a net $(S_\alpha)$ in $\F(X)$ such that $S_\alpha \to I_X$ uniformly on compact subsets of $X$ and $\limsup_\alpha \|TS_\alpha\| \le \lambda\|T\|$. In \cite{LLO1}, Lima, Lima and Oja, continuing to approach the above-mentioned problem, extended the weak BAP as follows.

\begin{definition}[Lima--Lima--Oja]
{\rm A Banach space $X$ has the} $\lambda$-bounded approximation property for $\A$ {\rm ($\lambda$-BAP for $\A$) if for every Banach space $Y$ and for each operator $T$ in $\A(X;Y)$, there exists a net $(S_\alpha)$ in $\F(X)$ such that $S_\alpha \to I_X$ uniformly on compact subsets of $X$ and $\limsup_\alpha \|TS_\alpha\|_\A \le \lambda\|T\|_\A.$}
\end{definition}

The BAP for $\A$ allows the understanding of several known approximation properties in terms of Banach operator ideals and their geometry. For instance, the $\lambda$-BAP is clearly the $\lambda$-BAP for $\mathcal L$, and it is also the $\lambda$-BAP for the ideal $\mathcal{I}$ of integral operators \cite[Theorem~2.1]{LLO1}. The weak $\lambda$-BAP is by definition the $\lambda$-BAP for $\W$, and it is also the $\lambda$-BAP for $\K$ \cite[Theorem~2.4]{LO} and for the ideal $\N$ of nuclear operators \cite[Theorem~3.1]{LLO1}.

From \cite{LO} and \cite{LLO1} it is clear that in the special cases of $\A$ mentioned above, the $\lambda$-BAP for $\A$ is equivalent to its (at least formal) weakening, where the uniform convergence $S_\alpha \to I_X$ on compact subsets of $X$ is replaced by the pointwise convergence. In turn, a weakening of this weakening was occasionally also considered in \cite{LLO1}. Namely, in~\cite[Problem~5.5]{LLO1}, the authors wondered if given an arbitrary Banach operator ideal $\A$, the $\lambda$-BAP for $\A$ could be equivalent to a seemingly weaker property where no ``global'' behavior for the approximating net is required. We shall call this property the local $\lambda$-BAP for $\A$ (see Definition~\ref{def:local for A} below). Problem~5.5 of \cite{LLO1} (see also \cite[Problem 4.1]{Oja_CIDAMA}) has an obvious positive answer if $\A=\mL$. The answer is also positive if $\A =\mathcal W$ or $\A=\K$ \cite[Theorem~3.6]{Oja_strong}.

One of our main aims in the present paper is to show that these two weakenings are not formal (see Sections~2 and~4 below). So, it makes sense to introduce the following concepts.

\begin{definition}
{\rm A Banach space $X$ has the} weak $\lambda$-bounded approximation property for $\A$ {\rm (weak $\lambda$-BAP for $\A$ ) if for every Banach space $Y$ and for each operator $T$ in $\A(X;Y)$, there exists a net $(S_\alpha)$ in $\F(X)$ such that $S_\alpha \to I_X$ pointwise and $\limsup_\alpha \|TS_\alpha\|_\A \le \lambda\|T\|_\A$. }
\end{definition}

\begin{definition}\label{def:local for A}
{\rm A Banach space $X$ has the} local $\lambda$-bounded approximation property for $\A$ {\rm (local $\lambda$-BAP for $\A$) if for every Banach space $Y$ and for each operator $T$ in $\A(X;Y)$, there exists a net $(T_\alpha) $ in $\mathcal F(X;Y)$ such that $T_\alpha \to T$ pointwise and $\limsup_\alpha \|T_\alpha\|_\A \le \lambda \|T\|_\A$.}
\end{definition}

Remark that the local $\lambda$-BAP for $\K$ was considered in \cite{Oja_strong} under the name of condition $c^*_\lambda$. It is interesting and also important to note that the local $\lambda$-BAP for the ideal $\Ps_p$ of absolutely $p$-summing operators was considered, implicitly, without giving any name, already in 1972 by Saphar \cite{Sap}. Namely, in~\cite[Theorem~2]{Sap}, Saphar characterized his $\lambda$-BAP of order $p$ (this is, by definition, the $\lambda$-BAP which is given by the Chevet--Saphar tensor norm $g_p$; see Section~\ref{Sec: tensor norms}) as follows. For $1\le p \le \infty$ we denote by $p^*$ the conjugate index of $p$, i.e., $1/p + 1/{p^*}=1$ with the usual convention that $p^*=1$ if $p=\infty$.

\begin{theorem}[Saphar] \label{Saphar_orig}
Let $1\le \lambda < \infty$ and $1\le p \le \infty$. A Banach space $X$ has the $\lambda$-BAP of order $p^*$ if and only if $X$ has the local $\lambda$-BAP for $\Ps_p$.
\end{theorem}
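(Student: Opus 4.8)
The plan is to read off both properties from the Chevet--Saphar tensor norm $g_p$ via the trace duality
\[
\bigl(E\hat\otimes_{g_p}F\bigr)'=\Ps_p(E;F^*)\qquad\text{(isometrically)},
\]
which identifies $\Ps_p$ with the maximal Banach ideal belonging to $g_p$. Write $\pi_p(\cdot)=\norm{\cdot}_{\Ps_p}$. Since $g_{p^*}=(g_p)'$, the $\lambda$-BAP of order $p^*$ --- the $\lambda$-BAP given by $g_{p^*}$, as set up in Section~\ref{Sec: tensor norms} --- should translate into a statement about absolutely $p$-summing operators out of $X$; concretely, I would first check, by a short computation with the above trace duality (performed in finite dimensions, i.e.\ on the local forms of the properties involved), that $X$ has the $\lambda$-BAP of order $p^*$ if and only if there is a net $(S_\gamma)$ in $\F(X)$ with $S_\gamma\to I_X$ pointwise and
\[
\limsup_\gamma \pi_p(\rho S_\gamma)\le\lambda\,\pi_p(\rho)\qquad\text{for every Banach space }W\text{ and every }\rho\in\Ps_p(X;W).
\]
(This is the identification of the $\lambda$-BAP of order $p^*$ with the weak $\lambda$-BAP for $\Ps_p$.)

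With this in hand, the implication ``$\lambda$-BAP of order $p^*$ $\Rightarrow$ local $\lambda$-BAP for $\Ps_p$'' costs nothing: given such a net and $T\in\Ps_p(X;Y)$, put $T_\gamma:=TS_\gamma\in\F(X;Y)$; then $T_\gamma\to T$ pointwise because $T$ is continuous, and taking $\rho=T$ in the displayed inequality yields $\limsup_\gamma\norm{T_\gamma}_{\Ps_p}=\limsup_\gamma\pi_p(TS_\gamma)\le\lambda\,\pi_p(T)=\lambda\norm{T}_{\Ps_p}$.

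For the converse I would manufacture the net by a directed-system argument. Assume the local $\lambda$-BAP for $\Ps_p$ and index a net by triples $\gamma=(E,\Phi,\ep)$ with $E\subseteq X$ finite-dimensional, $\Phi\subseteq B_{X^*}$ finite and $\ep>0$, ordered in the obvious way. For a fixed $\gamma$ one wants $S_\gamma\in\F(X)$ that is close to $I_X$ on $E$ (in the norm of $X$ and after pairing with $\Phi$) and satisfies $\pi_p(\rho S_\gamma)\le(\lambda+\ep)\pi_p(\rho)$ for every $p$-summing $\rho$ whose relevant behaviour is captured by $(E,\Phi)$. To build it, use Pietsch's domination and factorization theorems to package the data at stage $\gamma$ into a single test operator $u_\gamma\in\Ps_p(X;L_p(\mu_\gamma))$ of the canonical form factoring through $C(K_\gamma)\to L_p(\mu_\gamma)$, apply the local $\lambda$-BAP for $\Ps_p$ to $u_\gamma$ to obtain $v_\gamma\in\F(X;L_p(\mu_\gamma))$ with $v_\gamma\to u_\gamma$ pointwise on a prescribed finite set and $\pi_p(v_\gamma)\le(\lambda+\ep)\pi_p(u_\gamma)$, and then transport $v_\gamma$ back to an operator $S_\gamma\in\F(X)$ using the concrete structure of the model $C(K_\gamma)\to L_p(\mu_\gamma)$. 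One then checks that $S_\gamma\to I_X$ pointwise and that, for a fixed $\rho\in\Ps_p(X;W)$, $\pi_p(\rho S_\gamma)\le(\lambda+\ep')\pi_p(\rho)$ holds once $\gamma$ is large enough to resolve $\rho$; running $\gamma$ gives $\limsup_\gamma\pi_p(\rho S_\gamma)\le\lambda\,\pi_p(\rho)$, which by the first paragraph is the $\lambda$-BAP of order $p^*$.

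The main obstacle is precisely this converse: the passage from the local statement --- where the finite-rank approximant of each individual $T$ is arbitrary --- to one net $(S_\gamma)$ in $\F(X)$ controlling $\pi_p(\rho S_\gamma)$ simultaneously for \emph{all} $p$-summing $\rho$. Such an upgrade fails for a general Banach operator ideal (indeed, the local $\lambda$-BAP is in general strictly weaker than the weak one); it goes through here only because $\Ps_p$ is ``accessible'' through Pietsch's factorization $C(K)\to L_p(\mu)$, which lets a finite-rank approximation produced on the range side be carried back to $X$ without spoiling the $p$-summing norm. Making the ``transport back to $\F(X)$'' step precise, and verifying the uniformity over all $\rho$ in the limit, is the technical core of the argument; throughout one must keep the trace-duality identification $\bigl(E\hat\otimes_{g_p}F\bigr)'=\Ps_p(E;F^*)$ restricted to finite dimensions, that is, argue with the local (finite-dimensional) reformulation of the $\lambda$-BAP of order $p^*$.
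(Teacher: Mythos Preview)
Your plan has two issues. First, a definitional slip: the condition ``there is a single net $(S_\gamma)$ in $\F(X)$ with $S_\gamma\to I_X$ pointwise and $\limsup_\gamma\pi_p(\rho S_\gamma)\le\lambda\,\pi_p(\rho)$ for \emph{every} $\rho\in\Ps_p(X;W)$'' is \emph{not} the weak $\lambda$-BAP for $\Ps_p$ as defined in the paper---that property only asks for a net depending on the individual operator $T$. What you wrote is a priori strictly stronger, so your parenthetical identification is wrong, and your ``short computation'' in step~1 would have to do real work (it is essentially the whole theorem in disguise). Second, and more seriously, the converse you sketch has a genuine gap at the ``transport back'' step: from $v_\gamma\in\F(X;L_p(\mu_\gamma))$ approximating the Pietsch factor $u_\gamma$ there is no evident mechanism producing $S_\gamma\in\F(X)$ with $\rho S_\gamma$ controlled for \emph{all} $p$-summing $\rho$; the Pietsch model $C(K_\gamma)\to L_p(\mu_\gamma)$ does not come with a left inverse into $X$, and you give no substitute. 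You yourself flag this as the technical core but leave it unspecified; as written it is a gap, not a proof.

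The paper avoids both difficulties. It never passes through a ``one net for all $\rho$'' reformulation. Instead it proves a general tensor-norm statement (Theorem~\ref{Teo Saphar Gral}): if $\A$ is maximal with associated norm $\alpha$, then the $\alpha$-$\lambda$-BAP of $X$ implies the local $\lambda$-BAP for $\A^*$ (via the description of the $\alpha$-BAP in \cite[Proposition~21.8]{DF}), and conversely the local $\lambda$-BAP for $\A^*$ together with the $\alpha'$-$\tilde\lambda$-BAP of $X^*$ gives back the $\alpha$-$\lambda\tilde\lambda$-BAP of $X$, by a direct duality computation: write $\alpha(S)$ as a trace pairing with some $T\in\A^*(X;Y^{**})$, replace $T$ by a finite-rank $T_0$ using the local hypothesis, and bound the resulting pairing using the $\alpha'$-BAP of $X^*$. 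For $\alpha=g_{p^*}$ one has $\A^*=\Ps_p$ and $g_{p^*}'=/d_p$ is totally accessible, so every $X^*$ has the $g_{p^*}'$-MAP and the converse comes for free with $\tilde\lambda=1$. If you want to salvage your Pietsch idea for the direction ``local $\Rightarrow$ weak'' alone, the paper's Theorem~\ref{wBAP coincide localBAP} shows the cleaner route: use only that $\Ps_p$ is injective, restrict $T$ to its range $Z=\overline{T(X)}$, take finite-rank approximants $T_\alpha=\sum x_j^*\otimes z_j\in\F(X;Z)$, lift each $z_j$ to some $x_j\in X$ with $Tx_j\approx z_j$, and set $S=\sum x_j^*\otimes x_j$; then $TS\approx T_\alpha$ in the $\Ps_p$-norm. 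This is the ``transport back'' you were looking for, and it requires no Pietsch factorization.
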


Summarizing we have:
\begin{equation*}\label{implicaciones}
\lambda\text{-BAP} \ \Rightarrow \
\lambda\text{-BAP for } \A \Rightarrow
\text{ weak } \lambda\text{-BAP for } \A \Rightarrow \text{ local } \lambda\text{-BAP for } \A.
\end{equation*}
We do not know of any example of an ideal $\A$ for which the $\lambda$-BAP is strictly stronger than the $\lambda$-BAP for $\A$. However, as already mentioned, we shall show that the subtle differences between the $\lambda$-BAP for $\A$, the weak $\lambda$-BAP for $\A$ and the local $\lambda$-BAP for $\A$ are, in fact, not formal (see Sections~2 and~4 for examples).

The paper is organized as follows. In Section~\ref{Sec: BAP's for A} we study the weak BAP for $\A$, the local BAP for $\A$ and the interplay between them. We exhibit classes of ideals for which they coincide (Theorem~\ref{wBAP coincide localBAP}) and also examples for which they differ (Proposition~\ref{weak BAP not local}). Also, we give an omnibus characterization of the weak BAP for $\A$ (Theorem~\ref{Omnibus Thm}) which allows us to relate this property and the BAP for $\A$. In Section~\ref{Sec: lifting} we relate these approximation properties with some other approximation properties, also determined by Banach operator ideals, showing that they pass from a dual space down to the underlying space, giving there the corresponding metric approximation properties. In order to do so, we show that (for many operator ideals) it is enough to check the weak and the local BAPs for $\A$ using only bidual spaces. Finally, in Section~\ref{Sec: tensor norms} we connect the weak and the local BAPs for $\A$ with approximation properties given by tensor norms (Theorem~\ref{Teo Saphar Gral}) extending, among others, Theorem~\ref{Saphar_orig} of Saphar (Corollary~\ref{Saphar again}). As a by-product, we show that every Banach space has the local MAP for the ideal of $p$-integral operators $\I_p$, $1\le p\le \infty$ (Corollary~\ref{c^* I_p}), and that this property may differ from the weak BAP for $\I_p$, $2< p < \infty$ (Proposition~\ref{c* and weak BAP Ip}).
\medskip

All the relevant terminology and preliminaries will be given in corresponding sections. For the theory of operator ideals we refer the reader to the books of Pietsch~\cite{Pie}, of Defant and Floret \cite{DF}, of Diestel, Jarchow and Tonge~\cite{djt} and of Ryan~\cite{Ryan}. For approximation properties we refer the reader to the books of Lindenstrauss and Tzafriri~\cite{LT}, of Diestel, Fourie and Swart \cite{dfs} and to the books \cite{DF, Ryan}; see also the surveys \cite{Cas, Oja_Survey} and references therein.

Our notation is standard. We consider Banach spaces $X$, $Y$ over the same, either real or complex, field $\mathbb K$. We denote by $X^{*}$ and $B_X$ the topological dual of $X$ and its closed unit ball, respectively. The canonical inclusion of $X$ into its bidual $X^{**}$ is denoted by $J_X$. The Banach space of all absolutely $p$-summable sequences in $X$ is denoted by $\ell_p(X)$ and its norm by $\norm{\cdot}_p$, for any $1\le p<\infty$, and the Banach space of all null sequences in $X$ is denoted by $c_0(X)$, considered with the supremum norm. As usual, operators in $\mathcal F(X;Y)$ are regarded as elements of the algebraic tensor product $X^{*}\otimes Y$ and tensors in $X\otimes Y$ as operators in $\mathcal F (X^{*};Y)$. Also, $\tau_w, \tau_s$ and $\tau_c$ stand for the weak operator topology, the strong operator topology and the compact open topology, respectively; all considered on $\mL(X;Y)$.

\section{Three bounded approximation properties for $\A$}
\label{Sec: BAP's for A}

Let us start with a couple of preliminary observations showing, among others, that the problem~\cite[Problem~5.5]{LLO1} mentioned in the Introduction has a negative answer. Although some counterexamples had been at hand in several articles, they were not explicitly written. For instance, $\N$ immediately provides a counterexample, due to the reasons given below. Also, the next result can be deduced from~\cite{Oja_JMAA}. We shall use that $\overline{\F(X;Y)}^{\norm{\cdot}_\A}=\A(X;Y)$ for all Banach spaces $X$ and $Y$ whenever $\A$ is a minimal Banach operator ideal.

\begin{proposition}\label{No Prop 5.5}
Every Banach space $X$ has the local MAP for $\A$ whenever $\A$ is a minimal Banach operator ideal. As a consequence, the local MAP for $\A$ does not imply the BAP for $\A$ whenever $X$ fails the AP and $\A$ is a minimal Banach operator ideal.
\end{proposition}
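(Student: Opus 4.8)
The plan is to prove the first assertion directly from the definition of the local MAP for $\A$ together with the characterization of minimal Banach operator ideals as those for which $\overline{\F(X;Y)}^{\norm{\cdot}_\A}=\A(X;Y)$; the ``consequence'' part then follows by combining the first assertion with Proposition~\ref{No Prop 5.5}'s own hypothesis that $X$ fails the AP (which in particular forces $X$ to fail the BAP, hence the BAP for $\mL$, and we argue that it also fails the BAP for $\A$).

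First I would fix a Banach space $X$, a Banach space $Y$, and an operator $T\in\A(X;Y)$. Since $\A$ is minimal, $T$ lies in the $\norm{\cdot}_\A$-closure of $\F(X;Y)$, so there is a sequence (or net) $(T_n)$ in $\F(X;Y)$ with $\norm{T_n-T}_\A\to 0$. In particular $\norm{T_n}_\A\to\norm{T}_\A$, so $\limsup_n\norm{T_n}_\A=\norm{T}_\A=1\cdot\norm{T}_\A$, which is exactly the norm estimate required for the local $1$-BAP for $\A$. Moreover, $\norm{T_n-T}\le\norm{T_n-T}_\A\to 0$, so $T_n\to T$ uniformly on $B_X$, and \emph{a fortiori} $T_n\to T$ pointwise. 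Indexing this sequence as a net, we have produced a net $(T_\alpha)$ in $\F(X;Y)$ with $T_\alpha\to T$ pointwise and $\limsup_\alpha\norm{T_\alpha}_\A\le\norm{T}_\A$. As $Y$ and $T$ were arbitrary, $X$ has the local $1$-BAP for $\A$, i.e., the local MAP for $\A$.

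For the consequence, suppose $X$ fails the AP and $\A$ is a minimal Banach operator ideal. By the first part $X$ has the local MAP for $\A$. If $X$ had the BAP for $\A$, then in particular $X$ would have the $\lambda$-BAP for $\A$ for some $\lambda$; applying this with $Y=X$ and $T=I_X$ — note $I_X\in\A(X;X)$ need not hold for a general ideal, so instead one applies it to any $T\in\A(X;Y)$ and uses that the defining net $(S_\alpha)\subset\F(X)$ satisfies $S_\alpha\to I_X$ uniformly on compact sets — yields a net of finite rank operators on $X$ converging to $I_X$ uniformly on compact subsets of $X$, i.e., $X$ has the AP, a contradiction. Hence $X$ fails the BAP for $\A$, while still having the local MAP for $\A$, so the local MAP for $\A$ does not imply the BAP for $\A$.

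The only delicate point is the last step: one must be careful that the definition of the $\lambda$-BAP for $\A$ already guarantees a net $(S_\alpha)$ in $\F(X)$ with $S_\alpha\to I_X$ uniformly on compact sets — this is built into Definition~1.1 regardless of whether $I_X$ itself belongs to $\A$, provided $\A(X;Y)$ is nonempty for at least one $Y$ (which holds since $0\in\A(X;Y)$, or more usefully since finite rank operators lie in every ideal). Thus the existence of any single nonzero $T\in\A(X;Y)$ triggers the full AP-type conclusion, and the contradiction with failure of the AP is genuine. Everything else is routine.
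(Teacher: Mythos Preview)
Your proof is correct and follows the same approach as the paper. The first part is essentially identical: use minimality to pick $(T_n)\subset\F(X;Y)$ with $\|T_n-T\|_\A\to 0$, whence both pointwise convergence and $\|T_n\|_\A\to\|T\|_\A$. For the consequence, the paper simply notes that the $\lambda$-BAP for $\A$ by definition furnishes a net in $\F(X)$ converging to $I_X$ uniformly on compact sets (so it implies the AP), which is exactly what you argue after your brief detour through $T=I_X$; your final observation that $T=0$ already suffices is the cleanest way to phrase it.
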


\begin{proof}
Let $\A$ be a minimal Banach operator ideal and $X, Y$ be Banach spaces. Given
$T \in \A(X;Y)$ there exists a sequence $(T_n) \subset \mathcal F(X;Y)$ such
that $T_n \rightarrow T$ in $\A$ (and therefore $T_n \rightarrow T$ pointwise).
Then, $\lim_n \|T_n\|_\A = \|T\|_\A$, showing that $X$ has the local MAP for
$\A$. In particular, this is true for any Banach space $X$ without the AP and
hence without the BAP for $\A$.
\end{proof}

Thus, \textit{the local MAP for $\A$}, in general, \textit{does not imply the
AP}. On the other hand, for instance, \textit{the AP does not imply the local
BAP for $\Ps_p$ for any $p\ne 2$}. This follows from Theorem~\ref{Saphar_orig}
and the fact, due to Reinov~\cite[Corollary~3.1]{Rei}, that there is a Banach
space with the AP which lacks the approximation property of order $q$ for any
$q\ne2$.

Well-known examples of minimal Banach operator ideals include $\overF$ and
$\N$. As we see next, if $\A$ equals one of these, then the local BAP for $\A$
is strictly weaker than the weak BAP for $\A$.

\begin{proposition}\label{weak BAP not local}
Every Banach space $X$ has the local MAP for $\overline \F$ and $\N$. If
$X$ fails the AP, then $X$ does not have the weak BAP for $\overline \F$ nor $\N$.
\end{proposition}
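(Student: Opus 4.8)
The plan is to handle the two assertions separately. For the first assertion — that every Banach space $X$ has the local MAP for $\overline{\F}$ and for $\N$ — I would simply invoke Proposition~\ref{No Prop 5.5}, since both $\overline{\F}$ and $\N$ are minimal Banach operator ideals. (For $\overline{\F}$ this is essentially the definition: $\overline{\F(X;Y)}^{\norm{\cdot}}=\overline{\F}(X;Y)$, and the approximable-operator norm is just the operator norm; for $\N$ it is the classical fact that finite rank operators are dense in the nuclear norm.) So the content of the proposition lies entirely in the second assertion.

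For the second assertion, the strategy is to show that the weak MAP for $\overline{\F}$ (respectively for $\N$), applied to a well-chosen operator $T$, forces $X$ to have the AP, contrapositively giving the claim. The natural choice is to test the property on identity-like operators. Concretely, if $X$ has the weak $\lambda$-BAP for $\overline{\F}$, then for every $T\in\overline{\F}(X;Y)$ there is a net $(S_\alpha)\subset\F(X)$ with $S_\alpha\to I_X$ pointwise (in particular $S_\alpha\to I_X$ in $\tau_s$, hence uniformly on compact sets once one knows $\sup_\alpha\|S_\alpha\|<\infty$) and $\limsup_\alpha\|TS_\alpha\|\le\lambda\|T\|$. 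The key idea is that $\overline{\F}(X;Y)$ contains operators $T$ that ``see'' the approximation behavior of $X$: if one can choose $Y$ and $T\in\overline{\F}(X;Y)$ so that $\|TS_\alpha\|$ being bounded forces $(S_\alpha)$ to be norm-bounded, then $S_\alpha\to I_X$ pointwise together with $\sup_\alpha\|S_\alpha\|<\infty$ yields (by the standard argument) $S_\alpha\to I_X$ uniformly on compacta, i.e.\ the BAP, and since minimality of $\overline{\F}$ makes the BAP for $\overline{\F}$ coincide with the ordinary BAP here, one gets BAP, hence AP, a contradiction. For $\N$ the same scheme applies, using that for a minimal ideal the relevant estimate $\|TS_\alpha\|_\N$ controls $\|TS_\alpha\|$, and that the factorization structure of nuclear operators lets one recover boundedness of $(S_\alpha)$.

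The main obstacle, and the crux of the argument, is producing the single operator $T$ that does this job — one needs $T\in\overline{\F}(X;Y)$ (or $\N(X;Y)$) for which the inequality $\limsup_\alpha\|TS_\alpha\|\le\lambda\|T\|$ genuinely constrains the net. The cleanest route I foresee is: embed $X$ isometrically into some $C(K)$ or $\ell_\infty(\Gamma)$, or better, use the fact that the \emph{formal inclusion} or a suitable compact operator $T=J\colon X\to \widehat X$ into a completion-type space is approximable, and that $\|TS_\alpha\|\approx\|S_\alpha\|$ in that situation. Alternatively — and this is probably the intended and more robust argument — one shows directly that the weak $\lambda$-BAP for a \emph{minimal} ideal $\A$ already implies the $\lambda$-BAP (not just the local one), because minimality gives $T_\alpha:=TS_\alpha\in\F(X;Y)$ with $\limsup\|T_\alpha\|_\A\le\lambda\|T\|_\A$ \emph{and} the extra pointwise data $S_\alpha\to I_X$, and then a density/principle-of-local-reflexivity-free argument upgrades pointwise to uniform-on-compacta convergence of $(S_\alpha)$ using the boundedness that $\overline{\F}$ (respectively $\N$) forces. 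Once that upgrade is in place, ``weak BAP for $\overline{\F}$ or $\N$'' $\Rightarrow$ ``BAP for $\overline{\F}$ or $\N$'' $\Rightarrow$ ``BAP'' $\Rightarrow$ ``AP'', and the contrapositive is exactly the statement. I expect the write-up to reduce to verifying that the net $(S_\alpha)$ coming from the weak BAP can be taken norm-bounded when $\A\in\{\overline{\F},\N\}$, which is where one must exploit the specific geometry of these ideals rather than a generic ideal property.
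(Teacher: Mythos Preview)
Your handling of the first assertion is correct and matches the paper's implicit appeal to Proposition~\ref{No Prop 5.5}.

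For the second assertion there is a genuine gap. Your primary mechanism---finding a single test operator $T\in\overline{\F}(X;Y)$ with $\|TS_\alpha\|\approx\|S_\alpha\|$, so that $\limsup_\alpha\|TS_\alpha\|\le\lambda\|T\|$ forces $(S_\alpha)$ to be norm-bounded---cannot work. Any $T\in\overline{\F}(X;Y)$ is compact, hence never bounded below on an infinite-dimensional subspace of $X$; one can always arrange $\|S_\alpha\|\to\infty$ while $\|TS_\alpha\|$ stays bounded. The same obstruction applies to $T\in\N(X;Y)$. Your ``alternative'' route (a generic minimality argument that would upgrade pointwise convergence of $(S_\alpha)$ to uniform-on-compacta convergence) is not a mechanism but a hope: minimality of $\A$ tells you only that finite ranks are $\|\cdot\|_\A$-dense in $\A$, and gives no control whatsoever over the norms $\|S_\alpha\|$.

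The paper proceeds quite differently and never attempts to bound $(S_\alpha)$. It shows instead that for $\A\in\{\overline{\F},\N\}$ the weak $\lambda$-BAP for $\A$ coincides with the $\lambda$-BAP for $\A$ (indeed with the weak $\lambda$-BAP itself), which trivially implies the AP. For $\A=\N$ this equivalence is quoted from \cite{LLO1,LLO2}. For $\A=\overline{\F}$ the key trick (Proposition~\ref{BAP for F}) is to embed the \emph{target} space $Y$ isometrically into some $C(K)$: since $C(K)$ has the AP, one has $\K(X;C(K))=\overline{\F}(X;C(K))$, so the weak $\lambda$-BAP for $\overline{\F}$, applied to $jT$ for $T\in\K(X;Y)$ and the isometric embedding $j\colon Y\to C(K)$, delivers the weak $\lambda$-BAP for $\K$, which by \cite[Theorem~2.4]{LO} is exactly the weak $\lambda$-BAP. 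Note that your instinct to use a $C(K)$-embedding was on the right track, but the embedding is of $Y$, not of $X$, and its purpose is to convert compact operators into approximable ones, not to manufacture an approximable isometry.
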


\begin{proof}
In the both cases $\A=\overline \F$ and $\A=\N$, the weak $\lambda$-BAP for $\A$ is the same as the $\lambda$-BAP for $\A$. For $\mathcal N$, this was proved in~\cite{LLO1} (see the proof of Theorem~3.1 in \cite{LLO1} or~\cite[Theorem~1.2]{LLO2}). For $\overF$, see Proposition~\ref{BAP for F} below.
\end{proof}

\begin{proposition} \label{BAP for F}
Let $X$ be a Banach space and $1\le \lambda < \infty$. Then, the following
statements are equivalent.
\begin{enumerate}[\upshape (i)]
 \item $X$ has the weak $\lambda$-BAP.
 \item $X$ has the $\lambda$-BAP for $\overF$.
 \item $X$ has the weak $\lambda$-BAP for $\overF$.
\end{enumerate}
\end{proposition}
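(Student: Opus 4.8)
The plan is to prove the chain of equivalences by showing (i) $\Rightarrow$ (ii) $\Rightarrow$ (iii) $\Rightarrow$ (i), using the general implications already recorded in the Introduction together with the special structure of $\overF$ (in particular that $\overF$ is the smallest closed operator ideal and that on $\overF$ the ideal norm coincides with the operator norm $\|\cdot\|$). Note first that (ii) $\Rightarrow$ (iii) is immediate from the displayed chain of implications in the Introduction, since the weak $\lambda$-BAP for $\A$ is always implied by the $\lambda$-BAP for $\A$. So the real content is (i) $\Rightarrow$ (ii) and (iii) $\Rightarrow$ (i).

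For (i) $\Rightarrow$ (ii), I would take $T\in\overF(X;Y)$ and a net $(S_\alpha)$ in $\F(X)$ witnessing the weak $\lambda$-BAP: $S_\alpha\to I_X$ uniformly on compact sets and $\limsup_\alpha\|TS_\alpha\|\le\lambda\|T\|$. Since the ideal norm on $\overF$ is the operator norm, the inequality $\limsup_\alpha\|TS_\alpha\|_{\overF}\le\lambda\|T\|_{\overF}$ is exactly the same statement, so the same net works and $X$ has the $\lambda$-BAP for $\overF$. (One should check $TS_\alpha\in\F(X;Y)$, which is clear since $S_\alpha$ has finite rank.) Thus this implication is essentially a tautology once one observes that $\overF$ carries the operator norm.

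For the substantive implication (iii) $\Rightarrow$ (i), suppose $X$ has the weak $\lambda$-BAP for $\overF$; I must produce, for an \emph{arbitrary} Banach space $Y$ and $T\in\W(X;Y)$, a net in $\F(X)$ converging to $I_X$ uniformly on compacta with $\limsup_\alpha\|TS_\alpha\|\le\lambda\|T\|$. The natural route is to reduce to a single well-chosen approximable operator defined on $X$. Given $T\in\W(X;Y)$ and a compact set $K\subset X$, weak compactness lets one replace $T$ on $K$ by an approximable-type operator: indeed one can factor or approximate the relevant restriction through the fact that $\overline{T(B_X)}$ is weakly compact, and then invoke the weak BAP for $\overF$ applied to a suitable $R\in\overF(X;Z)$. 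Concretely, I would aim to show that it suffices to test the weak BAP against operators whose range is a subspace with good properties, and that the pointwise (not uniform) convergence furnished by (iii) can be upgraded to uniform convergence on compacta by a standard diagonal/convexity argument — passing from the net given by (iii) to convex combinations, exploiting that uniform convergence on a compact set $K$ is controlled by the weak operator topology via the separability of $\overline{\mathrm{span}}\,K$.

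The main obstacle I anticipate is precisely this last upgrade: from the \emph{weak} BAP for $\overF$ (pointwise convergence, arbitrary approximable $T$) back to the \emph{weak} BAP (uniform convergence on compacta, arbitrary weakly compact $T$). The key trick should be to apply (iii) to the single operator $T\in\overF(X;c_0)$ (or into a suitable $C(\Omega)$ or $c_0(\Gamma)$) built so that $\|Tx\|$ dominates a prescribed family of evaluations that controls uniform convergence on a given compact set, and then to pass to convex combinations of the resulting net to convert $\tau_w$-convergence of $\{TS_\alpha\}$ into $\tau_c$-convergence of $\{S_\alpha\}$ while keeping $\limsup\|TS_\alpha\|$ under control; this convexification is exactly the device used in the Lima--Oja circle of ideas and in Proposition~\ref{No Prop 5.5}'s background results. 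Once uniform convergence on compacta is recovered, one checks that the weakly compact $T$ one started with satisfies $\limsup_\alpha\|TS_\alpha\|\le\lambda\|T\|$ by the same approximable-reduction, completing (iii) $\Rightarrow$ (i).
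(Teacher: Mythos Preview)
Your handling of (i)$\Rightarrow$(ii)$\Rightarrow$(iii) is fine and matches the paper. The gap is entirely in (iii)$\Rightarrow$(i), and it is a real one: the route you sketch does not close.

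The obstacle you flag --- upgrading pointwise convergence to $\tau_c$-convergence while keeping the norm bound --- is precisely what the paper \emph{avoids} rather than overcomes. The paper does not try to verify the original definition of the weak $\lambda$-BAP (with $T\in\W(X;Y)$ and $\tau_c$-convergence) directly. Instead it invokes the known equivalence, from \cite[Theorem~2.4]{LO}, between the weak $\lambda$-BAP and the weak $\lambda$-BAP for $\K$ (which, in the terminology of this paper, requires only pointwise convergence and only compact $T$). So it suffices to show that (iii) implies the weak $\lambda$-BAP for $\K$, and for this no convergence upgrade is needed at all.

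The second ingredient you are missing is how to pass from $\overF$ to $\K$. You gesture at building a special operator into $c_0$ or $C(\Omega)$ controlling many evaluations at once; the paper's trick is simpler and works operator by operator. Given $T\in\K(X;Y)$, embed $Y$ isometrically via $j$ into a $C(K)$-space. Since $C(K)$ has the AP, $\K(X;C(K))=\overF(X;C(K))$, so $jT\in\overF(X;C(K))$. Apply (iii) to $jT$ to get $(S_\alpha)\subset\F(X)$ with $S_\alpha\to I_X$ pointwise and $\limsup_\alpha\|jTS_\alpha\|\le\lambda\|jT\|$; since $j$ is an isometry this reads $\limsup_\alpha\|TS_\alpha\|\le\lambda\|T\|$, and you are done. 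Your proposed ``single universal operator plus convexification'' device is neither needed nor clearly sufficient here: convex combinations will not by themselves turn pointwise convergence of $S_\alpha$ into $\tau_c$-convergence without some equicontinuity or boundedness you do not have.
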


\begin{proof}
The implications (i) $\Rightarrow$ (ii) $\Rightarrow$ (iii) are clear. As the weak $\lambda$-BAP and the weak $\lambda$-BAP for $\K$ coincide \cite[Theorem~2.4]{LO}, to complete the proof we
show that the latter property is implied by (iii). Fix a Banach space $Y$ and $T\in \K(X;Y)$. Denote by $j\colon Y\to C(K)$ a linear isometric embedding for a suitable compact space $K$. Since $C(K)$ has the AP, $\K(X;C(K))=\overF(X;C(K))$. Hence, $jT\in \overF(X;C(K))$. By (iii), there exists a net $(S_\alpha)$ in $\F(X)$ such that $S_\alpha \to I_X$ pointwise and $\limsup_\alpha \|jTS_\alpha\| \le \lambda\|jT\|$. Being $j$ an isometry, the result follows.
\end{proof}

The operator ideals which are Banach with respect to the usual norm $\norm{\cdot}$ are called {\it closed} (see \cite{Pie}) or {\it classical} (see \cite{djt}). A wide list of closed operator ideals can be found in~\cite{GoGu00}, for instance. The inclusion $\A\subset \mathcal B$ (defined as $\A(X;Y)\subset \mathcal B(X;Y)$ for all Banach spaces $X$ and $Y$) provides a
natural partial ordering on the family of all operator ideals. In the family of all closed operator ideals, $\mL$ is the largest element and $\overF$ is the smallest one.

Proposition~\ref{BAP for F} shows that the smallest closed ideal $\overF$
\textit{yields the weak} BAP, meaning that the weak $\lambda$-BAP and the
$\lambda$-BAP for $\overF$ coincide. It is not known (see
\cite[Problem~5.3]{LLO1}) whether there is the largest closed operator ideal
yielding the weak BAP. (Note that $\mL$ trivially yields the BAP.) To our
knowledge, the best result belongs to Lissitsin \cite{Liss}: the weak
$\lambda$-BAP is equivalent to the $\lambda$-BAP for $\mathcal{RN}^{dual}$,
the ideal of operators whose adjoints are Radon--Nikod\'ym.

We saw that, in the case of the closed operator ideals $\A=\overF, \K, \W$ and
$\mL$, the BAP for $\A$ coincides with the weak BAP for $\A$. We shall show that
this is true for \textit{any} closed operator ideal $\A$ (see Corollary~\ref{BAP
for A = weak BAP for A}). Hence, in particular, for $\A=\mathcal{RN}^{dual}$.

Our next goal is to establish an omnibus characterization of the weak BAP for
$\A$ (Theorem~\ref{Omnibus Thm}). This is one of our main results which will be
used throughout the paper. In order to proceed, recall that Grothendieck's
characterization (see for example \cite[Proposition~1.e.3]{LT}) states that, algebraically,
\begin{equation}\label{Groth charact}
(\mathcal L(X;Y), \tau_c)^*=Y^*\widehat{\otimes}_\pi X,
\end{equation}
the projective tensor product, under the duality
$$
\langle u, T \rangle= \sum_{n=1}^{\infty} y^*_n(T x_n),\quad
u=\sum_{n=1}^{\infty} y^*_n\otimes x_n \in Y^*\widehat{\otimes}_\pi X,\quad
T\in \mL(X;Y).
$$

Recall also that
$$
\begin{array}{rl}
Y^*\widehat{\otimes}_\pi X & =\Big\{\dps u=\sum_{n=1}^{\infty} y^*_n \otimes
x_n\colon \ (y^*_n) \in \ell_1(Y^*), \ (x_n) \in c_{0}(X)\Big\} \\
& =\Big\{\dps u =\sum_{n=1}^{\infty} y^*_n \otimes x_n \colon
(y^*_n) \in c_{0}(Y^*), \ (x_n) \in \ell_1(X)\Big\}.\end{array}
$$
Here it will be convenient to replace the null sequences with the $\A$-null
sequences of Carl and Stephani \cite{CaSt}, defined as follows.

Fixed an operator ideal $\A$, a sequence $(x_n)$ in a Banach space $X$ is said
to be \textit{$\A$-null} if there exist a Banach space $Z$, an operator $R\in
\A(Z;X)$ and a null sequence $(z_n) \subset Z$ such that $x_n=Rz_n$ for all$n
\in\mathbb N$ (see \cite[Definition~1.1 and Lemma~1.2]{CaSt}). The set
$c_{0,\A}(X)$ of the $\A$-null sequences in $X$ forms a linear subspace of
$c_0(X)$. Now we consider the following linear subspaces of
$Y^* \widehat{\otimes}_\pi X$:
$$
\begin{array}{rl}
\G_\A & :=\Big\{ \dps u=\sum_{n=1}^{\infty} y^*_n \otimes x_n\colon \ (y^*_n) \in
\ell_1(Y^*), \ (x_n) \in c_{0,\A}(X)\Big\}, \\
\G^\A & :=\Big\{\dps u =\sum_{n=1}^{\infty} y^*_n \otimes x_n \colon
(y^*_n) \in c_{0,\A}(Y^*), \ (x_n) \in \ell_1(X)\Big\}.\end{array}
$$
Associated to these subspaces we have natural locally convex Hausdorff
topologies $\tau_\A :=\sigma(\mL (X;Y), \G_\A)$ and $\tau^\A:= \sigma(\mL
(X;Y),\G^\A)$. Then, as is well known, we may identify $(\mL (X;Y), \tau_\A)^*=
\G_\A$ and $(\mL (X;Y), \tau^\A)^*= \G^\A$, similarly to \eqref{Groth charact}.

Since $\tau_w=\sigma(\mathcal L(X;Y), Y^* \otimes X)$, we clearly have $\tau_w
\subset \tau_\A, \tau^\A \subset \tau_c$. By \cite[Proposition~1.4 and
Remark~1.3]{LaTur2}, $c_0(X)=c_{0,\overF}(X)$, and therefore
$\tau_{\overF}=\tau^{\overF}=\tau_c$. Hence, we have the following.

\begin{proposition} \label{tau A = tau c}
Let $\A$ be an operator ideal. If $\overF\subset \A$, then
$\tau_\A=\tau^\A=\tau_c$.
\end{proposition}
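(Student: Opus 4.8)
The plan is to exploit the monotonicity of the $\mathcal A$-null sequences together with the already-proved fact that $\overline{\mathcal F}$-null sequences are exactly the null sequences. First I would observe that, for any two operator ideals with $\A \subset \B$, every $\A$-null sequence is $\B$-null: indeed, if $x_n = R z_n$ with $R \in \A(Z;X)$ and $z_n \to 0$ in $Z$, then $R \in \B(Z;X)$ as well (with $\norm{R}_\B \le \norm{R}_\A$ by the ideal property), so the same factorization witnesses that $(x_n)$ is $\B$-null. Hence $c_{0,\A}(X) \subset c_{0,\B}(X)$ for $\A \subset \B$.

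Next I would chain these inclusions with the universal bound $c_{0,\A}(X) \subset c_0(X)$, which holds for every operator ideal $\A$ (this is part of the Carl--Stephani setup recalled just before the statement, since the factoring operator $R$ is in particular bounded and maps null sequences to null sequences). Assuming $\overF \subset \A$, the previous paragraph gives $c_{0,\overF}(X) \subset c_{0,\A}(X)$. Combining,
\begin{equation*}
c_{0,\overF}(X) \subset c_{0,\A}(X) \subset c_0(X).
\end{equation*}
By \cite[Proposition~1.4 and Remark~1.3]{LaTur2}, which was quoted in the excerpt, $c_0(X) = c_{0,\overF}(X)$, so the two outer terms coincide and therefore $c_{0,\A}(X) = c_0(X)$ for every Banach space $X$. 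The same argument applies verbatim to $Y^*$, giving $c_{0,\A}(Y^*) = c_0(Y^*)$.

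Finally I would feed this equality of sequence spaces back into the definitions of $\G_\A$ and $\G^\A$. Since $\G_\A$ is built from pairs $\bigl((y^*_n),(x_n)\bigr)$ with $(y^*_n) \in \ell_1(Y^*)$ and $(x_n) \in c_{0,\A}(X)$, replacing $c_{0,\A}(X)$ by $c_0(X)$ turns the defining set into precisely the first description of $Y^* \widehat{\otimes}_\pi X$ recalled above; hence $\G_\A = Y^*\widehat{\otimes}_\pi X$ as subspaces. Likewise $\G^\A = Y^*\widehat{\otimes}_\pi X$, using the second (symmetric) description of the projective tensor product. Since $\tau_\A = \sigma(\mL(X;Y),\G_\A)$, $\tau^\A = \sigma(\mL(X;Y),\G^\A)$ and $\tau_c = \sigma(\mL(X;Y), Y^*\widehat{\otimes}_\pi X)$ by \eqref{Groth charact}, the coincidence of the predual spaces yields $\tau_\A = \tau^\A = \tau_c$, as desired.

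There is essentially no hard step here: the proof is a short chain of inclusions closed off by the known identity $c_0(X) = c_{0,\overF}(X)$. If anything merits a word of care, it is checking that an $\A$-null sequence is automatically $\B$-null when $\A \subset \B$ — but this is immediate from the ideal property and was in effect already used to see that $c_{0,\A}(X) \subset c_0(X)$ in the first place. So the argument is really just the remark preceding Proposition~\ref{tau A = tau c}, spelled out one implication level deeper.
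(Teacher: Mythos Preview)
Your proof is correct and follows essentially the same route as the paper: the paper's ``proof'' is just the sentence preceding the proposition, which invokes $c_0(X)=c_{0,\overF}(X)$ and leaves the monotonicity step $c_{0,\overF}(X)\subset c_{0,\A}(X)\subset c_0(X)$ (for $\overF\subset\A$) and the passage from sequence spaces back to the topologies $\tau_\A,\tau^\A,\tau_c$ to the reader, which you have simply spelled out. One cosmetic remark: the parenthetical ``with $\norm{R}_\B\le\norm{R}_\A$'' is superfluous (and not even meaningful for a non-Banach ideal), since the set-theoretic inclusion $\A(Z;X)\subset\B(Z;X)$ is all that is needed.
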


We shall need a natural modification of the BAP for $\A$.

\begin{definition}\label{def: BAP for A and tau}
{\rm Let $\A$ be a Banach operator ideal and $1\le \lambda < \infty$. Let $X$ be
a Banach space and $\tau$ a topology on $\mathcal L(X)$. We say that $X$ has
the} $\lambda$-bounded approximation property for $\A$ and $\tau$ {\rm
($\lambda$-BAP for $\A$ and $\tau$) if for every Banach space $Y$ and for each
operator $T$ in $\A(X;Y)$, there exists a net $(S_\alpha)$ in $\F(X)$ such that
$S_\alpha \to I_X$ in $\tau$ and $\limsup_\alpha \|TS_\alpha\|_\A \le
\lambda\|T\|_\A$.}
\end{definition}

Clearly, the $\lambda$-BAP for $\A$ is precisely the $\lambda$-BAP for $\A$
and $\tau_c$. And the weak $\lambda$-BAP for $\A$ is the $\lambda$-BAP for $\A$
and $\tau_s$. It also coincides with the $\lambda$-BAP for $\A$ and $\tau_w$
because $\tau_w$ and $\tau_s$ are the same on convex sets (see, for instance,
\cite[Corollary~VI.1.5]{DS}).

Now, we are in conditions to state and prove the omnibus characterization of
the weak BAP for $\A$ which can be seen as a generalization of \cite[Theorem
2.4]{LO} (from $\W=(\W, \norm{.})$ to $\A=(\A, \norm{.}_\A)$); see also
Remark~\ref{DFJP-LNO} concerning methods of proof.

\begin{theorem}\label{Omnibus Thm}
Let $\A$ be a Banach operator ideal and $1\le \lambda < \infty$. For a Banach
space $X$, the following statements are equivalent.
\begin{enumerate}[\upshape (i)]
\item $X$ has the weak $\lambda$-BAP for $\A$.

\item For every Banach space $Y$ and for each operator $T \in \A(X;Y)$,
there exists a net $(S_{\alpha})$ in $\mathcal F(X)$ with
$\limsup_{\alpha}\|TS_{\alpha}\|_\A \leq \lambda \|T\|_\A$ such that
$TS_{\alpha}\rightarrow T$ pointwise.

\item For every Banach space $Y$ and for each operator $T \in \A(X;Y)$ with
$\|T\|_\A=1$, for all sequences $(y^*_n)$ in $Y^*$ and $(x_n)$ in $X$ such that
$\dps\sum_{n=1}^{\infty} \|y^*_n\|\|x_n\|<\infty$, one has the inequality
$$
|\sum_{n=1}^{\infty} y^*_n(Tx_n)|\leq \lambda \sup\limits_{\substack{ \|TS\|_\A
\leq 1 \\ S \in \mathcal F(X)}}|\sum_{n=1}^{\infty} y^*_n(TSx_n)|.
$$
\item $X$ has the $\lambda$-BAP for $\A$ and $\tau^{\A^{dual}}$.
\end{enumerate}
\end{theorem}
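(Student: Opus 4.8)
The plan is to prove the cycle of implications (i) $\Rightarrow$ (ii) $\Rightarrow$ (iii) $\Rightarrow$ (iv) $\Rightarrow$ (i), mimicking the structure of \cite[Theorem~2.4]{LO} but keeping careful track of the ideal norm $\norm{\cdot}_\A$ in place of the operator norm. The implication (i) $\Rightarrow$ (ii) is essentially trivial: if $S_\alpha \to I_X$ pointwise, then $TS_\alpha \to TI_X = T$ pointwise for any fixed $T \in \A(X;Y)$, and the bound $\limsup_\alpha \norm{TS_\alpha}_\A \le \lambda\norm{T}_\A$ is carried over verbatim.

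For (ii) $\Rightarrow$ (iii), first normalize to $\norm{T}_\A = 1$. Let $(S_\alpha)$ be a net as in (ii). Given sequences $(y^*_n) \subset Y^*$, $(x_n) \subset X$ with $\sum_n \norm{y^*_n}\norm{x_n} < \infty$, the tensor $u = \sum_n y^*_n \otimes x_n$ lies in $Y^* \widehat{\otimes}_\pi X = (\mL(X;Y), \tau_c)^*$, and since $\tau_s \subset \tau_c$ the functional $T \mapsto \langle u, T\rangle = \sum_n y^*_n(Tx_n)$ is $\tau_s$-continuous as well. Hence $\sum_n y^*_n(TS_\alpha x_n) \to \sum_n y^*_n(Tx_n)$. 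Now write $c := \limsup_\alpha \norm{TS_\alpha}_\A \le \lambda$. For $\alpha$ in a suitable tail, $\norm{TS_\alpha}_\A \le \lambda + \ep$ (and is nonzero on a subnet where it matters), so each $S := S_\alpha$ (rescaled) is a competitor in the supremum on the right-hand side of (iii) with $\norm{T(S/\norm{TS_\alpha}_\A)}_\A \le 1$, giving $|\sum_n y^*_n(TS_\alpha x_n)| \le \norm{TS_\alpha}_\A \cdot \sup_{\norm{TS}_\A \le 1, S\in\F(X)} |\sum_n y^*_n(TSx_n)|$. Passing to the limit in $\alpha$ yields the desired inequality with constant $\lambda$. (A minor technical point to watch: handling the case $\norm{TS_\alpha}_\A = 0$ along part of the net, which only makes the left side smaller.)

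For (iii) $\Rightarrow$ (iv), the idea is a Hahn--Banach separation argument performed in the dual pairing $(\mL(X;Y), \tau^{\A^{dual}})^* = \G^{\A^{dual}}$. Fix $Y$ and $T \in \A(X;Y)$ with $\norm{T}_\A = 1$, and set $C := \{ TS : S \in \F(X),\ \norm{TS}_\A \le \lambda\}$. This is a convex subset of $\A(X;Y) \subset \mL(X;Y)$; we want $T \in \overline{C}^{\,\tau^{\A^{dual}}}$, which after post-composing by the net $(S_\alpha)$ back through $T$ gives exactly the $\lambda$-BAP for $\A$ and $\tau^{\A^{dual}}$. If $T$ were not in this closure, then by Hahn--Banach there is $\varphi \in \G^{\A^{dual}}$, i.e. $\varphi = \sum_n y^*_n \otimes x_n$ with $(y^*_n) \in c_{0,\A^{dual}}(Y^*)$ and $(x_n) \in \ell_1(X)$, separating $T$ from $C$: $\mathrm{Re}\,\varphi(T) > \sup_{R \in C} \mathrm{Re}\,\varphi(R)$. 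Since $C$ is balanced-ish (it contains $TS$ iff it contains scalar multiples of modulus $\le 1$ of $TS$ whose $\A$-norm stays $\le \lambda$), we can upgrade to $|\varphi(T)| > \lambda \sup_{\norm{TS}_\A\le 1,\, S\in\F(X)} |\varphi(TS)| = \lambda\sup_{\norm{TS}_\A\le 1} |\sum_n y^*_n(TSx_n)|$. But since $(x_n) \in \ell_1(X)$ and $(y^*_n) \in c_0(Y^*)$ we have $\sum_n \norm{y^*_n}\norm{x_n} < \infty$, so this contradicts (iii). The key subtlety here — and I expect this to be the main obstacle — is verifying that the relevant functionals separating $T$ from $C$ can indeed be taken in $\G^{\A^{dual}}$ rather than in the larger $Y^*\widehat{\otimes}_\pi X$; this is precisely where the ideal $\A^{dual}$ (rather than $\A$) enters, and it relies on identifying which tensors remain $\tau^{\A^{dual}}$-continuous, together with the Carl--Stephani machinery on $\A$-null sequences and the interplay $(y^*_n) \in c_{0,\A^{dual}}(Y^*)$ coming from $T \in \A$, $T^* \in \A^{dual}$ factoring the sequence $(y^*_n)$ appropriately.

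Finally, (iv) $\Rightarrow$ (i) follows because $\tau_w \subset \tau^{\A^{dual}}$ (as $\overF \subset \A^{dual}$ is automatic, or directly since $Y^* \otimes X \subset \G^{\A^{dual}}$), so convergence in $\tau^{\A^{dual}}$ implies convergence in $\tau_w$, hence in $\tau_s$ on the convex set of finite-rank operators (using that $\tau_w$ and $\tau_s$ agree on convex sets, as already noted in the text); thus the net $(S_\alpha)$ witnessing (iv) also witnesses the weak $\lambda$-BAP for $\A$. Throughout, the recurring care point is that passing to $\limsup$ of $\A$-norms under pointwise/weak limits is legitimate here because the competitors are produced as actual finite-rank operators with controlled $\A$-norm, never by taking limits of $\A$-norms of limiting operators.
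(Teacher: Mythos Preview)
Your implications (i) $\Rightarrow$ (ii), (ii) $\Rightarrow$ (iii) and (iv) $\Rightarrow$ (i) are fine and essentially match the paper. The gap is in (iii) $\Rightarrow$ (iv), and it is a genuine one.

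You separate in $(\mL(X;Y),\tau^{\A^{dual}})$: you take $C=\{TS:S\in\F(X),\ \|TS\|_\A\le\lambda\}$ and argue that $T\in\overline{C}^{\,\tau^{\A^{dual}}}$. The separation does give $\varphi=\sum_n y^*_n\otimes x_n\in\G^{\A^{dual}}$ automatically (that is the definition of the dual of this topology; there is no subtlety there, contrary to what you anticipate), and the contradiction with (iii) is immediate. But what you obtain is only a net $(S_\alpha)$ with $TS_\alpha\to T$ in $\tau^{\A^{dual}}$ on $\mL(X;Y)$; this is \emph{not} condition (iv), which requires $S_\alpha\to I_X$ in $\tau^{\A^{dual}}$ on $\mL(X)$. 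Your phrase ``after post-composing by the net $(S_\alpha)$ back through $T$ gives exactly the $\lambda$-BAP for $\A$ and $\tau^{\A^{dual}}$'' conflates the two: convergence of $TS_\alpha$ against functionals in $\G^{\A^{dual}}\subset Y^*\widehat\otimes_\pi X$ does not imply convergence of $S_\alpha$ against all functionals in $\G^{\A^{dual}}\subset X^*\widehat\otimes_\pi X$, since only functionals of the form $\sum_n (T^*y^*_n)\otimes x_n$ are reached this way.

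The paper's proof separates in the correct space $(\mL(X),\tau^{\A^{dual}})$, taking $M=\{S\in\F(X):\|TS\|_\A\le\lambda\}$ and assuming $I_X\notin\overline{M}^{\,\tau^{\A^{dual}}}$. The separating functional is then $\sum_n x^*_n\otimes x_n$ with $(x^*_n)\in c_{0,\A^{dual}}(X^*)$, and these $x^*_n$ are \emph{not} directly of the form $T^*y^*_n$, so (iii) cannot be applied as stated. The missing idea is this: factor $x^*_n=Rz_n$ with $R\in\A^{dual}(Z;X^*)$ via Carl--Stephani, observe that $R^*J_X\in\A(X;Z^*)$, and build the auxiliary operator $\widetilde T=(T,\,rR^*J_X)\in\A(X;Y\times Z^*)$ for small $r>0$. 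Applying (iii) to $\widetilde T$ with the functionals $(0,z_n)\in(Y\times Z^*)^*$ recovers $x^*_n$ on the left and, since $\|TS\|_\A\le\|\widetilde TS\|_\A$, controls the right-hand side; letting $r\to 0$ yields the contradiction. This construction---combining the given $T$ with a second $\A$-operator manufactured from the $\A^{dual}$-null sequence---is the heart of the argument, and your outline does not contain it.
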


\begin{proof}
Clearly, (i) implies (ii). Also, (iv) implies (i) since the weak $\lambda$-BAP
for $\A$ coincides with the weak $\lambda$-BAP for $\A$ and $\tau_w$. To prove
that (ii) implies (iii), follow the easy straightforward proof of~\cite[Theorem
2.4, (a) $\Rightarrow$ (d)]{LO} with the obvious modifications.

Let us prove that (iii) implies (iv). Fix a Banach space $Y$ and $T\in
\A(X;Y)$ such that $\|T\|_\A =1$. Consider the absolutely convex set
$$
M=\{S \in \mathcal F(X) \colon \|TS\|_\A \leq \lambda\},
$$
and suppose that $I_X \notin \overline{M}\;^{\tau^{\A^{dual}}}$. Then, there
exists $\phi \in (\mL(X);\tau^{\A^{dual}})^*$ such that
$$
|\phi(I_X)|> \sup\{|\phi(S)| \colon S \in M\}.
$$
We may write $\phi=\sum_{n=1}^{\infty} x^*_n\otimes x_n$ with
$(x^*_n)\in c_{0,\A^{dual}}(X^{*})$ and $(x_n) \in \ell_1(X)$. Hence,
\begin{equation}\label{eq1sot}
\dps |\sum_{n=1}^{\infty} x^*_n(x_n)|> \sup\limits_{\substack{\|TS\|_\A \leq
\lambda\\S \in \mathcal F(X) }} |\sum_{n=1}^{\infty} x^*_n(Sx_n)|=\lambda
\sup\limits_{\substack{ \|TS\|_\A \leq 1\\S \in \mathcal F(X)}}
|\sum_{n=1}^{\infty} x^*_n(Sx_n)|.
\end{equation}

We affirm that inequality \eqref{eq1sot} cannot hold. Indeed, since the
sequence $(x^*_n)$ is $\A^{dual}$-null, given $\ep>0$ there exist a Banach space
$Z$, an operator $R\in \A^{dual}(Z;X^{*})$, meaning that $R^*\in
\A(X^{**};Z^*)$, and a null sequence $(z_n)$ in $B_Z$ such that $x^*_n=Rz_n$ for
all $n$. Then $R^*J_X\in \A(X;Z^*)$. Consider the Banach space $W=Y \times Z^*$
endowed with the sum norm. Fix $r> 0$ and define the operator $\widetilde{T}
\colon X \rightarrow W$ by $\widetilde{T}x= (Tx, rR^*J_Xx)$, $x\in X$. Then
$\widetilde{T} \in \A(X;W)$
and
$$
\|\widetilde{T}\|_{\A} \leq 1 + r\|R^*J_X\|_\A.
$$
As an element of $W^*$, $(0,z_n)$ satisfies
$$
(0,z_n) (\widetilde Tx)=r (R^*J_Xx)(z_n)=r (Rz_n)(x)=rx_n^*(x),
$$
for all $x \in X$ and all $n$. Then, by (iii), we have
\begin{equation}\label{eq2sot}
\begin{array}{rl}
\dps |\sum_{n=1}^\infty rx^*_n(x_n)|&=\dps|\sum_{n=1}^\infty (0,z_n)(\widetilde
Tx_n)| \\
& \leq \dps \lambda \sup\limits_{\substack{\|\widetilde TS\|_\A \leq
1+r\|R^*J_X\|_\A\\S \in \mathcal F(X)}} |\sum_{n=1}^\infty (0,z_n)(\widetilde T
Sx_n)|\\
& \dps =\lambda(1+r\|R^*J_X\|_\A) \sup\limits_{\substack{ \|\widetilde TS\|_\A
\leq 1\\S \in \mathcal F(X)}} |\sum_{n=1}^\infty r x^*_n(Sx_n)|.
\end{array}
\end{equation}
Since $T=P_Y\widetilde T$, where $P_Y$ is the norm one projection of $W$ onto
$Y$, we have $\|TS\|_\A\leq \|\widetilde T S\|_\A$, for any $S \in \mathcal
F(X)$. Hence from \eqref{eq2sot}, we obtain
$$
|\sum_{n=1}^\infty x^*_n(x_n)|\leq \lambda (1+r\|R^*J_X\|_\A)
\sup\limits_{\substack{\|TS\|_\A \leq 1\\S \in \mathcal F(X)}}
|\sum_{n=1}^\infty x^*_n(Sx_n)|.
$$
Since $r>0$ is arbitrary, we conclude that
$$
|\sum_{n=1}^\infty x^*_n(x_n)|\leq \lambda \sup\limits_{\substack{ \|TS\|_\A
\leq 1\\S \in \mathcal F(X) }} |\sum_{n=1}^\infty x^*_n(Sx_n)|,
$$
contradicting inequality \eqref{eq1sot}. Therefore, the proof is complete.
\end{proof}

\begin{remark}\label{DFJP-LNO}
{\rm Up to inequality~\eqref{eq1sot}, our proof of the implication (iii)
$\Rightarrow$ (iv) followed the beginning of the proof of~\cite[Theorem~2.4,
(d$'$) $\Rightarrow$ (a$'$)]{LO}. However, the main part of our proof
essentially differs from that in~\cite{LO}. Namely,~\cite{LO} relied on the
isometric version of the Davis, Figiel, Johnson and Pe{\l}czy\'nski
factorization lemma due to Lima, Nygaard and Oja~\cite{LNO}. Our proof cannot
use this factorization result because its suitable version seems to be unknown
for arbitrary Banach operator ideals. So, since in the case when $\A=\W$, one
has $\A=\A^{dual}$ \cite[Proposition~4.4.7]{Pie} and $\tau^{\A^{dual}}=\tau_c$
(see Proposition~\ref{tau A = tau c}), we have given as a by-product an
alternative proof of a main part of~\cite[Theorem~2.4]{LO}.}
\end{remark}

\begin{remark}
{\rm Let $\A$ be an operator ideal. By~\cite[Definition~1.2]{CaSt},
$c_{0,\A}(X)=c_{0,\A^{sur}}(X)$ for any Banach space $X$. Since also $\A^{dual\
sur}=\A^{inj\ dual}$ (see \cite[Theorem~4.7.16]{Pie}), we get that
$\tau^{\A^{dual}}=\tau^{\A^{dual\ sur}}=\tau^{\A^{inj\ dual}}$. Therefore,
condition (iv) of Theorem~\ref{Omnibus Thm} can be stated with $\tau^{\A^{dual\
sur}}$ or with $\tau^{\A^{inj\ dual}}$.}
\end{remark}

As a consequence of Theorem~\ref{Omnibus Thm}, together with the above remark
and Proposition~\ref{tau A = tau c}, we have the following.

\begin{corollary}\label{Coro con F}
Let $\A$ be a Banach operator ideal and $1\le \lambda <\infty$. If $\overF\subset \A^{inj\ dual}$, then a Banach space $X$ has the $\lambda$-BAP for $\A$ if and only if $X$ has the weak
$\lambda$-BAP for $\A$.
\end{corollary}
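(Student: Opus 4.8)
The plan is to obtain the equivalence as an immediate consequence of the omnibus characterization together with Proposition~\ref{tau A = tau c}. Note first that the implication ``$\lambda$-BAP for $\A$ $\Rightarrow$ weak $\lambda$-BAP for $\A$'' holds for every Banach operator ideal and every $\lambda$, simply because convergence uniformly on compact sets is stronger than pointwise convergence; so the only content to be proved is the reverse implication, and this is precisely where the hypothesis $\overF\subset\A^{inj\ dual}$ enters.

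For the converse, I would argue as follows. By Theorem~\ref{Omnibus Thm}, the equivalence of (i) and (iv), $X$ has the weak $\lambda$-BAP for $\A$ if and only if $X$ has the $\lambda$-BAP for $\A$ and $\tau^{\A^{dual}}$. By the remark following Theorem~\ref{Omnibus Thm} --- which rests on $c_{0,\A}(\,\cdot\,)=c_{0,\A^{sur}}(\,\cdot\,)$ and on $\A^{dual\ sur}=\A^{inj\ dual}$ --- the topology $\tau^{\A^{dual}}$ coincides with $\tau^{\A^{inj\ dual}}$. Now apply Proposition~\ref{tau A = tau c} to the Banach operator ideal $\A^{inj\ dual}$: since $\overF\subset\A^{inj\ dual}$ by hypothesis, one gets $\tau^{\A^{inj\ dual}}=\tau_c$. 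Consequently the $\lambda$-BAP for $\A$ and $\tau^{\A^{dual}}$ is nothing but the $\lambda$-BAP for $\A$ and $\tau_c$, which, as recorded right after Definition~\ref{def: BAP for A and tau}, is exactly the $\lambda$-BAP for $\A$. Reading this chain of equivalences backwards yields ``weak $\lambda$-BAP for $\A$ $\Rightarrow$ $\lambda$-BAP for $\A$'', which completes the proof.

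I do not expect any genuine obstacle here: the statement is a formal corollary of results already in hand, obtained essentially by substitution. The one small point worth making explicit is that $\A^{inj\ dual}$ is itself a Banach operator ideal, so that the weak topology $\tau^{\A^{inj\ dual}}$ is well defined and Proposition~\ref{tau A = tau c} genuinely applies to it; this is standard (see \cite{Pie}).
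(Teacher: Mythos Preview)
Your proof is correct and follows exactly the route the paper intends: the corollary is obtained by combining Theorem~\ref{Omnibus Thm} (the equivalence (i)$\Leftrightarrow$(iv)), the remark after it identifying $\tau^{\A^{dual}}=\tau^{\A^{inj\ dual}}$, and Proposition~\ref{tau A = tau c} applied to $\A^{inj\ dual}$. The paper does not write out a separate proof but simply records it as a consequence of these three ingredients, which is precisely what you have spelled out.
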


The above corollary applies to any closed ideal. In this case it can be restated as follows.

\begin{corollary} \label{BAP for A = weak BAP for A}
Let $\A$ be a closed operator ideal and $1\le \lambda <\infty$. Then a Banach
space $X$ has the $\lambda$-BAP for $\A$ if and only if $X$ has the weak
$\lambda$-BAP for $\A$.
\end{corollary}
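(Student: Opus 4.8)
The plan is to deduce Corollary~\ref{BAP for A = weak BAP for A} directly from Corollary~\ref{Coro con F}. That corollary asserts the equivalence of the $\lambda$-BAP for $\A$ and the weak $\lambda$-BAP for $\A$ under the hypothesis $\overF\subset\A^{inj\ dual}$, so the entire task reduces to verifying that this inclusion holds automatically whenever $\A$ is a closed operator ideal.

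First I would recall that $\overF$, the ideal of approximable operators, is the smallest closed operator ideal: every closed operator ideal $\B$ satisfies $\overF\subset\B$, since $\B$ contains all finite rank operators and is closed under the operator norm by definition. So the only thing to check is that $\A^{inj\ dual}$ is again a closed operator ideal when $\A$ is. This is standard: the dual (adjoint) procedure $\A\mapsto\A^{dual}$ preserves closedness because $T\mapsto T^*$ is an isometry for the operator norm, and the injective hull procedure $\B\mapsto\B^{inj}$ also preserves closedness (see Pietsch~\cite{Pie}); composing the two, $\A^{inj\ dual}$ is closed. Hence $\overF\subset\A^{inj\ dual}$, and Corollary~\ref{Coro con F} applies.

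Concretely the argument is: let $\A$ be a closed operator ideal; then $\A^{dual}$ is a closed operator ideal, hence so is $\A^{dual\ sur}=\A^{inj\ dual}$ (using the identity $\A^{dual\ sur}=\A^{inj\ dual}$ already quoted in the remark after Theorem~\ref{Omnibus Thm}); being closed, it contains $\overF$; now invoke Corollary~\ref{Coro con F}.

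I do not anticipate a genuine obstacle here — the statement is a clean specialization. The only point requiring a little care is making explicit that the injective-dual hull of a closed ideal stays closed; once that is granted, the inclusion $\overF\subset\A^{inj\ dual}$ is immediate from the minimality of $\overF$ among closed ideals, and the corollary follows with no further work. One could also phrase the proof without mentioning injective hulls at all, simply observing $\overF\subset\A^{dual}$ directly (since $\A$ closed forces $\A^{dual}$ closed) and then $\overF\subset\A^{dual}\subset\A^{inj\ dual}$, which is perhaps the shortest route.
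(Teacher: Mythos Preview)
Your proposal is correct and follows exactly the approach the paper intends: the paper's entire proof of this corollary is the single sentence ``The above corollary applies to any closed ideal,'' and you have simply spelled out why the hypothesis $\overF\subset\A^{inj\ dual}$ of Corollary~\ref{Coro con F} is automatic for closed $\A$. Your shortest route---$\A$ closed $\Rightarrow$ $\A^{dual}$ closed $\Rightarrow$ $\overF\subset\A^{dual}\subset\A^{inj\ dual}$---is the cleanest way to fill in that one-line gap.
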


Corollary~\ref{Coro con F} can also be applied to non-closed operator ideals
such as the ideal of $\infty$-integral operators $\I_\infty$.

\begin{proposition}
Let $1\le \lambda <\infty$. Then a Banach space $X$ has the $\lambda$-BAP for
$\I_\infty$ if and only if $X$ has the weak $\lambda$-BAP for $\I_\infty$.
\end{proposition}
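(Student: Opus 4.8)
The plan is to apply Corollary~\ref{Coro con F} with $\A = \I_\infty$, so the whole task reduces to verifying the inclusion $\overF \subset \I_\infty^{\mathrm{inj}\ \mathrm{dual}}$. Since $\overF$ is the smallest closed operator ideal, it suffices to show that $\I_\infty^{\mathrm{inj}\ \mathrm{dual}}$ contains all finite rank operators, equivalently (since operator ideals are closed under taking norm closures only when Banach, but $\overF$ is minimal) that $\overF \subset \I_\infty^{\mathrm{inj}\ \mathrm{dual}}$, which by minimality of $\overF$ is the same as $\F \subset \I_\infty^{\mathrm{inj}\ \mathrm{dual}}$. By the general identity $\A^{\mathrm{dual}\ \mathrm{sur}} = \A^{\mathrm{inj}\ \mathrm{dual}}$ recalled in the remark after Theorem~\ref{Omnibus Thm}, it is equivalent to show $\overF \subset (\I_\infty^{\mathrm{dual}})^{\mathrm{sur}}$, or directly that every finite rank operator belongs to $(\I_\infty^{\mathrm{dual}})^{\mathrm{sur}}$.

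First I would recall that $\I_\infty^{\mathrm{dual}} = \mL \circ \I_\infty^{\ast}$-type descriptions are not needed; instead use the known fact (see Pietsch~\cite{Pie} or Defant--Floret~\cite{DF}) that the dual ideal of $\I_\infty$ contains $\mL$ when acting into or out of suitable spaces, but more cleanly: $\I_\infty$ contains all operators that factor through an $L_\infty(\mu)$ space, and in particular $\I_\infty(X;Y) \supset \F(X;Y)$ whenever $Y$ is an $\mathcal L_\infty$-space. Since $\overF$ is the smallest closed ideal, by Proposition~\ref{tau A = tau c} the hypothesis of Corollary~\ref{Coro con F} is $\overF \subset \I_\infty^{\mathrm{inj}\ \mathrm{dual}}$, and the cleanest route is to observe that a rank-one operator $x^\ast \otimes y : X \to Y$ has adjoint $y^\ast \otimes x^\ast$-type behavior and that any finite rank operator is integral, indeed $\infty$-integral after one checks the norm, so $\F \subset \I_\infty$; taking duals and then surjective hulls, and using $c_{0,\A} = c_{0,\A^{\mathrm{sur}}}$, gives $\overF \subset \I_\infty^{\mathrm{inj}\ \mathrm{dual}}$. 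Then Corollary~\ref{Coro con F} applies verbatim.

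The main obstacle I anticipate is not conceptual but bookkeeping: one must be careful about which ideal-theoretic operation lands where, since $\I_\infty$ is genuinely non-closed and its dual, injective hull, and surjective hull are distinct in general. The key point to nail down is that $\F \subset \I_\infty$ (every finite rank operator is $\infty$-integral), from which $\F = \F^{\mathrm{dual}} \subset \I_\infty^{\mathrm{dual}}$ follows by taking adjoints, and then $\F \subset (\I_\infty^{\mathrm{dual}})^{\mathrm{sur}} = \I_\infty^{\mathrm{inj}\ \mathrm{dual}}$; taking $\norm{\cdot}$-closures (or simply using that $\overF$ is minimal and generated by $\F$) yields $\overF \subset \I_\infty^{\mathrm{inj}\ \mathrm{dual}}$. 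Once this inclusion is in place, the statement is immediate from Corollary~\ref{Coro con F}, so the proof should be only a few lines.

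\begin{proof}
By Corollary~\ref{Coro con F}, it suffices to check that $\overF \subset \I_\infty^{\mathrm{inj}\ \mathrm{dual}}$. Every finite rank operator is $\infty$-integral (see, e.g., \cite{DF,Pie}), so $\F \subset \I_\infty$; hence, taking adjoints, $\F \subset \F^{\mathrm{dual}} \subset \I_\infty^{\mathrm{dual}}$, and therefore $\F \subset (\I_\infty^{\mathrm{dual}})^{\mathrm{sur}} = \I_\infty^{\mathrm{inj}\ \mathrm{dual}}$, where the last equality is the one recalled in the remark following Theorem~\ref{Omnibus Thm}. Since $\overF$ is the minimal Banach operator ideal generated by $\F$ and $\I_\infty^{\mathrm{inj}\ \mathrm{dual}}$ is an operator ideal containing $\F$, we get $\overF \subset \I_\infty^{\mathrm{inj}\ \mathrm{dual}}$. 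The conclusion now follows from Corollary~\ref{Coro con F} applied to $\A = \I_\infty$.
\end{proof}
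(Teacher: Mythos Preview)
Your overall plan—to invoke Corollary~\ref{Coro con F} by verifying $\overF \subset \I_\infty^{\mathrm{inj}\ \mathrm{dual}}$—is exactly the paper's strategy. However, your verification of this inclusion has a genuine gap.

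The chain of inclusions you write, $\F \subset \I_\infty \Rightarrow \F \subset \I_\infty^{\mathrm{dual}} \Rightarrow \F \subset \I_\infty^{\mathrm{inj}\ \mathrm{dual}}$, uses nothing whatsoever about $\I_\infty$: every operator ideal contains $\F$ by definition, and the operations $(\cdot)^{\mathrm{dual}}$, $(\cdot)^{\mathrm{sur}}$, $(\cdot)^{\mathrm{inj}}$ again produce operator ideals, hence again contain $\F$. So the first three steps are tautological. The decisive step is the last one, where you pass from $\F \subset \I_\infty^{\mathrm{inj}\ \mathrm{dual}}$ to $\overF \subset \I_\infty^{\mathrm{inj}\ \mathrm{dual}}$, and this is where the argument fails: $\overF$ is the smallest \emph{closed} operator ideal, so $\overF \subset \B$ follows only when $\B$ is closed (i.e., Banach under the operator norm). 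You have not shown that $\I_\infty^{\mathrm{inj}\ \mathrm{dual}}$ is closed; your phrase ``$\overF$ is the minimal Banach operator ideal generated by $\F$'' conflates ``Banach'' with ``closed.'' Indeed, were your argument valid it would apply verbatim to every Banach operator ideal $\A$, yielding that the $\lambda$-BAP for $\A$ and the weak $\lambda$-BAP for $\A$ always coincide—contradicting Proposition~\ref{ContraejemploPi_p}, which exhibits a space with the weak MAP for $\Ps_p$ but without the BAP for $\Ps_p$.

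The paper closes the gap by actually identifying $\I_\infty^{\mathrm{inj}}$: using that $\I_\infty$ is associated with the tensor norm $g_\infty$, one gets that $\I_\infty^{\mathrm{inj}}$ is associated with $g_\infty\!\setminus = \varepsilon$, and hence $\I_\infty^{\mathrm{inj}} = \mL$. Then $\I_\infty^{\mathrm{inj}\ \mathrm{dual}} = \mL^{\mathrm{dual}} = \mL \supset \overF$, and Corollary~\ref{Coro con F} applies. So the missing ingredient in your attempt is precisely a nontrivial fact about $\I_\infty$ that makes its injective hull large (namely, all of $\mL$); without it, the inclusion $\overF \subset \I_\infty^{\mathrm{inj}\ \mathrm{dual}}$ cannot be obtained.
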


\begin{proof}
Recall that given $Z$ and $Y$ Banach spaces, $S\in \I_\infty(Z;Y)$ if and only
if $J_YS$ factorizes through a $C(K)$-space. Hence $\I_\infty\ne \mL$, which
allows us to observe that $\I_\infty$ is non-closed. Indeed, by its
definition~\cite[19.3.1,~19.3.9]{Pie}, $\I_\infty$ is maximal. But the only
maximal closed ideal is $\mL$ \cite[4.9.7]{Pie}.

Let us also observe that $\I_\infty^{inj}=\mL$. Indeed, by \cite[17.12(4)]{DF},
$\I_\infty^{inj}$ is associated with the Chevet--Saphar tensor norm $g_\infty$.
Therefore (see \cite[Theorem~20.11]{DF}) $\I_\infty^{inj}$ is associated with
$g_\infty\setminus$ which equals the injective tensor norm $\varepsilon$ (see
\cite[Proposition~20.14(5)]{DF}). The claim follows since $\varepsilon$ and $\mL$
are associated (see \cite[17.12(1)]{DF}. Hence $\I_\infty^{inj}=\mL$ and $\overF
\subset \mL =\mL^{dual}=\I_\infty^{inj\ dual}$.
\end{proof}

We saw (Proposition~\ref{weak BAP not local}) that the weak $\lambda$-BAP for
$\A$ and the local $\lambda$-BAP for $\A$ may differ. However, they coincide
for injective Banach operator ideals.

\begin{theorem}\label{wBAP coincide localBAP}
Let $\A$ be an injective Banach operator ideal and $1\le \lambda <\infty$.
Then a Banach space $X$ has the weak $\lambda$-BAP for $\A$ if and only
if $X$ has the local $\lambda$-BAP for $\A$.
\end{theorem}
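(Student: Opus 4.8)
The plan is to prove the non-trivial implication: the local $\lambda$-BAP for $\A$ implies the weak $\lambda$-BAP for $\A$ when $\A$ is injective. The converse is immediate from the definitions, since if $(S_\alpha)$ is a net in $\F(X)$ with $S_\alpha\to I_X$ pointwise and $\limsup_\alpha\|TS_\alpha\|_\A\le\lambda\|T\|_\A$, then $(TS_\alpha)$ is a net in $\F(X;Y)$ with $TS_\alpha\to T$ pointwise, witnessing the local $\lambda$-BAP. So the whole content is in recovering a net of the form $(TS_\alpha)$, with the \emph{single} finite-rank net $(S_\alpha)$ acting on $X$ and converging to $I_X$, out of the a priori weaker hypothesis that each $T$ can be separately approximated by finite-rank operators $T_\alpha\in\F(X;Y)$ in the $\A$-norm-bounded, pointwise sense.

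First I would reduce to a single well-chosen target operator. The key observation is that injectivity of $\A$ lets us embed $Y$ isometrically into an injective Banach space (a space of type $\ell_\infty(\Gamma)$, or $C(K)$): if $j\colon Y\to E$ is such an embedding, then for $T\in\A(X;Y)$ we have $jT\in\A(X;E)$ with $\|jT\|_\A=\|T\|_\A$ by injectivity, and conversely the behavior of any finite-rank net approximating $jT$ can be pulled back through $j$ since $j$ is an isometry onto its range. Thus it suffices to produce, for the operator $jT$ into the injective space $E$, a net $(S_\alpha)$ in $\F(X)$ with $S_\alpha\to I_X$ pointwise and $\limsup_\alpha\|jTS_\alpha\|_\A\le\lambda\|jT\|_\A$. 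The advantage of working into $E$ is an \emph{extension property}: given any finite-rank operator $R\colon X\to E$ that approximates $jT$ well, one expects to be able to ``factor'' or ``lift'' it so as to read off a corresponding operator on $X$ itself.

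The heart of the argument, then, is to convert a local finite-rank approximation $(T_\alpha)$ of $jT$ (which exists by the local $\lambda$-BAP for $\A$) into a finite-rank net on $X$. I would proceed as follows. Write each $T_\alpha=\sum_{i} x^*_{\alpha,i}\otimes e_{\alpha,i}\in X^*\otimes E$; the finitely many functionals $x^*_{\alpha,i}\in X^*$ span a finite-dimensional subspace of $X^*$, and one wants to promote $T_\alpha$ to some $S_\alpha\in\F(X)$ whose composition with $jT$ essentially reproduces $T_\alpha$. Here I expect the main obstacle to lie: there is no canonical way to turn $\sum_i x^*_{\alpha,i}\otimes e_{\alpha,i}$ into an element of $X^*\otimes X$ with controlled $\A$-behavior after composing with $T$. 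The natural device to overcome this is a local reflexivity / finite-dimensional-decomposition type argument, or the use of the fact that $T_\alpha\to T$ pointwise means $T_\alpha(x)$ and $T(x)$ are close for $x$ in a prescribed finite set, combined with the AP-type machinery already available: the point is that the existence of \emph{any} local approximation of $jT$, together with injectivity, should force, via a Hahn--Banach / duality argument on the cone generated by operators $\{jTS : S\in\F(X),\ \|jTS\|_\A\le\lambda\}$, that $I_X$ lies in the appropriate closure, exactly as in the proof of Theorem~\ref{Omnibus Thm}. Concretely, I would mimic the structure of that proof: assume $I_X$ is not in the $\tau_s$-closure (equivalently $\tau_w$-closure) of $M=\{S\in\F(X):\|TS\|_\A\le\lambda\}$, separate by a functional $\phi=\sum_n x^*_n\otimes x_n$ with $(x^*_n)\in\ell_1(X^*)$, $(x_n)\in c_0(X)$, and derive a contradiction by testing against the local approximation $T_\alpha\to jT$; the injectivity enters precisely to guarantee that the functionals $x^*_n$ extend appropriately, or equivalently that the relevant operator built from $\phi$ can be realized with domain $E$ rather than $Y$.

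In short, the strategy is: (1) embed $Y$ isometrically into an injective space $E$, reducing to target $jT$; (2) invoke the local $\lambda$-BAP for $\A$ to get a net $(T_\alpha)\subset\F(X;E)$ with $T_\alpha\to jT$ pointwise and $\limsup_\alpha\|T_\alpha\|_\A\le\lambda\|jT\|_\A$; (3) run a separation/duality argument (parallel to the proof of Theorem~\ref{Omnibus Thm}, using injectivity of $\A$ to handle the extension of separating functionals from $Y$ to $E$) to place $I_X$ in the $\tau_s$-closure of $M$, thereby producing the desired net $(S_\alpha)$ in $\F(X)$; (4) pull back through the isometry $j$ to conclude the weak $\lambda$-BAP for $\A$. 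The injectivity of $\A$ is used exactly twice and crucially: once to preserve the $\A$-norm under the embedding $j$, and once to convert the ``free-standing'' finite-rank approximations $T_\alpha$ of $jT$ into operators of the form $jTS_\alpha$ — this second use is where I anticipate the real technical work.
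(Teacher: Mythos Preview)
Your proposal has a genuine gap at exactly the point you flag as ``the real technical work''. You embed $Y$ into a \emph{larger} injective space $E$ and then try to turn finite-rank approximants $T_\alpha\in\F(X;E)$ of $jT$ into operators of the form $jTS_\alpha$. But enlarging the target is the wrong direction: the range vectors $e_{\alpha,i}$ of $T_\alpha$ sit in an enormous space $E$ and there is no reason they should be near $jT(X)$, so there is no mechanism to lift them to points of $X$. Your fallback separation argument is only sketched, and it is not clear how the hypothesis (pointwise convergence of \emph{some} finite-rank net to $jT$) would feed into a duality contradiction of the Omnibus type; the separating functional lives in $X^*\widehat\otimes_\pi X$, not in $E^*\widehat\otimes_\pi X$, and injectivity of $E$ gives you nothing here.

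The paper's move is the opposite: use injectivity of $\A$ to \emph{shrink} the codomain. Let $Z=\overline{T(X)}$ and let $T_0\colon X\to Z$ be $T$ with restricted range; since $\A$ is injective, $T_0\in\A(X;Z)$ and $\|T_0\|_\A=\|T\|_\A$. Now apply the local $\lambda$-BAP to $T_0$ to get $T_\alpha=\sum_j x^*_j\otimes z_j\in\F(X;Z)$ with $T_\alpha\to T_0$ pointwise and $\limsup_\alpha\|T_\alpha\|_\A\le\lambda\|T\|_\A$. Because each $z_j$ lies in $\overline{T(X)}$, one can choose $x_j\in X$ with $\|Tx_j-z_j\|<\varepsilon$ (normalizing so that $\sum_j\|x^*_j\|=1$) and set $S_{(\alpha,\varepsilon)}=\sum_j x^*_j\otimes x_j\in\F(X)$. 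Then $\|TS_{(\alpha,\varepsilon)}-T_\alpha\|_\A<\varepsilon$, so $TS_{(\alpha,\varepsilon)}\to T$ pointwise and $\limsup\|TS_{(\alpha,\varepsilon)}\|_\A\le\lambda\|T\|_\A$, which is condition (ii) of the Omnibus Theorem. The entire ``conversion'' step you were missing is this density trick: restricting to the range makes every target vector of the local approximant nearly a $T$-image, and that is precisely what allows you to read off an $S\in\F(X)$.
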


\begin{proof}
Assume that $X$ has the local $\lambda$-BAP for $\A$. Let us show
that condition (iii) of Theorem~\ref{Omnibus Thm} holds. Let $Y$ be a Banach
space and take $T\in \A(X;Y)$. Set $Z=\overline{T(X)}$ and denote by $T_0$ the
operator $T$ with values in $Z$. Since $\A$ is injective, applying
\cite[Proposition 8.4.4]{Pie}, we know that $T_0\in \A(X;Z)$ and
$\|T_0\|_\A=\|T\|_\A$. By assumption, there exists a net $(T_\alpha) \in
\F(X;Z)$ such that $T_\alpha \rightarrow T_0$ pointwise and
$$
\limsup_{\alpha} \|T_\alpha\|_\A\leq \lambda \|T_0\|_\A=\lambda\|T\|_\A.
$$

Let us order the set of pairs $(\alpha,\ep)$ where $\alpha$ is as above and
$\ep>0$ in a natural way: $(\alpha,\ep)\ge (\tilde \alpha,\tilde\ep)$ if and
only if $\alpha\ge \tilde \alpha$ and $\ep \le \tilde \ep$. For each
$(\alpha,\ep)$ look at the operator $T_\alpha$ which is of the form
$$
T_\alpha=\sum_{j=1}^n x^*_j \otimes z_j\in X^*\otimes Z,
$$
for some $z_1,\ldots,z_n \in Z$ and $x^*_1,\ldots, x^*_n \in X^*$ with
$\sum_{j=1}^{n}\|x^*_j\|=1$. Choose $x_j \in X$ such that $\|Tx_j-z_j\| < \ep$,
$j=1,\dots,n$. Let $S_{(\alpha,\ep)}\in \F(X)$ be the finite rank operator
defined by
$$
S_{(\alpha,\ep)}=\sum_{j=1}^n x^*_j \otimes x_j.
$$
Then
$$
\|TS_{(\alpha,\ep)}-T_\alpha\|_\A=\|\sum_{j=1}^n x^*_j \otimes (Tx_j-z_j)\|_\A
\leq \sum_{j=1}^n\|x^*_j\|\|Tx_j-z_j\|< \ep.
$$
Therefore,
$$
\limsup_{(\alpha,\ep)} \|TS_{(\alpha,\ep)}\|_\A \le\limsup_{\alpha}
\|T_\alpha\|_\A \leq \lambda \|T\|_\A
$$
and for every $x\in X$
$$
\|TS_{(\alpha,\ep)}x-Tx\| \leq \|T_\alpha x-Tx\|+\ep\|x\|,
$$
implying that $TS_{(\alpha,\ep)}\rightarrow T$ pointwise, which completes the
proof.
\end{proof}

\begin{proposition}\label{BAP = local BAP inj closed}
Let $\A$ be an injective closed operator ideal and $1\le \lambda <\infty$.
Then a Banach space $X$ has the $\lambda$-BAP for $\A$ if and only if $X$ has
the local $\lambda$-BAP for $\A$.
\end{proposition}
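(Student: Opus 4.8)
The plan is simply to chain together the two equivalences already established in this section. Since an injective closed operator ideal is in particular a closed operator ideal, Corollary~\ref{BAP for A = weak BAP for A} applies and shows that the $\lambda$-BAP for $\A$ is equivalent to the weak $\lambda$-BAP for $\A$. Since such an ideal is also injective—and a closed operator ideal is automatically Banach, with the usual operator norm, so the hypothesis of the next result is met—Theorem~\ref{wBAP coincide localBAP} applies and shows that the weak $\lambda$-BAP for $\A$ is equivalent to the local $\lambda$-BAP for $\A$. Composing these two equivalences gives the statement.

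Concretely, if $X$ has the $\lambda$-BAP for $\A$, then by Corollary~\ref{BAP for A = weak BAP for A} it has the weak $\lambda$-BAP for $\A$, and then by Theorem~\ref{wBAP coincide localBAP} it has the local $\lambda$-BAP for $\A$. Conversely, if $X$ has the local $\lambda$-BAP for $\A$, then Theorem~\ref{wBAP coincide localBAP} yields the weak $\lambda$-BAP for $\A$, and Corollary~\ref{BAP for A = weak BAP for A} then yields the $\lambda$-BAP for $\A$.

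There is no genuine obstacle here: the only point to verify is that the hypotheses of the two cited results are simultaneously satisfied, which is exactly what "injective closed operator ideal" provides. So the proposition follows at once, and it serves as the natural common strengthening of Corollary~\ref{BAP for A = weak BAP for A} and Theorem~\ref{wBAP coincide localBAP} in the presence of both injectivity and closedness.
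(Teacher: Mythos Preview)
Your proof is correct and follows exactly the same approach as the paper: the authors also obtain the result as an immediate combination of Corollary~\ref{BAP for A = weak BAP for A} and Theorem~\ref{wBAP coincide localBAP}.
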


\begin{proof}
The result follows as a direct application of Corollary~\ref{BAP for A = weak
BAP for A} and Theorem~\ref{wBAP coincide localBAP}.
\end{proof}

Proposition~\ref{BAP = local BAP inj closed} applies, among others, to $\K$,
$\W$, $\mathcal{RN}$, Asplund or $\mathcal{RN}^{dual}$, Rosenthal, Banach--Saks,
completely continuous, weakly completely continuous, unconditionally
converging, separable range, strictly singular and absolutely continuous
operators. The particular case of Proposition~\ref{BAP = local BAP inj closed}
when $\A=\K$ should be compared with \cite[Theorem~3.6]{Oja_strong}.

We shall need the following result which is immediate from Theorem~\ref{wBAP
coincide localBAP}, because $\Ps_p$ is an injective Banach operator ideal
($\Ps_\infty=\mL$ is a trivial case).

\begin{corollary}\label{Coro Saphar}
Let $1\le \lambda < \infty$ and $1\leq p \leq \infty$. Then a Banach space $X$
has the weak $\lambda$-BAP for $\Ps_p$ if and only if $X$ has the local
$\lambda$-BAP for $\Ps_p$.
\end{corollary}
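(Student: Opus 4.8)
The plan is to deduce the statement directly from Theorem~\ref{wBAP coincide localBAP}: since that theorem gives the equivalence of the weak $\lambda$-BAP for $\A$ and the local $\lambda$-BAP for $\A$ for \emph{every} injective Banach operator ideal $\A$, the only thing left to check is that $\Ps_p$ is an injective Banach operator ideal for each $1\le p\le\infty$. So the proof should really be no more than a line or two, as the phrasing of the corollary already suggests.

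For $1\le p<\infty$, I would recall that $(\Ps_p,\pi_p)$ is a Banach operator ideal and that its injectivity is classical. Concretely, if $T\in\mL(X;Y)$ and $j\colon Y\to Z$ is an isometric embedding, then the defining Pietsch inequality
$$
\Big(\sum_{i=1}^{n}\|jTx_i\|^{p}\Big)^{1/p}\le c\,\sup_{x^{*}\in B_{X^{*}}}\Big(\sum_{i=1}^{n}|x^{*}(x_i)|^{p}\Big)^{1/p}
$$
is unaffected when $jT$ is replaced by $T$, because $\|jTx_i\|=\|Tx_i\|$ for every $i$; hence $jT\in\Ps_p(X;Z)$ if and only if $T\in\Ps_p(X;Y)$, with $\pi_p(jT)=\pi_p(T)$ (see \cite{Pie} or \cite{djt}). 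For $p=\infty$ one has $\Ps_\infty=\mL$, which is trivially injective. With injectivity in hand, applying Theorem~\ref{wBAP coincide localBAP} to $\A=\Ps_p$ yields the claim. In the trivial case $p=\infty$ one may alternatively note directly that both properties reduce to the ordinary $\lambda$-BAP: taking $T=I_X$ shows each implies the $\lambda$-BAP, while conversely, if $(S_\alpha)$ is a net in $\F(X)$ witnessing the $\lambda$-BAP of $X$, then $(S_\alpha)$ itself witnesses the weak $\lambda$-BAP for $\mL$ and $T_\alpha:=TS_\alpha$ witnesses the local $\lambda$-BAP for $\mL$.

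I do not expect any genuine obstacle here: the entire mathematical content is carried by Theorem~\ref{wBAP coincide localBAP}, and the corollary follows once one invokes the standard fact — which I would just cite — that $\Ps_p$ is an injective Banach operator ideal. The only (routine) care needed is to separate out the degenerate endpoint $p=\infty$, where $\Ps_\infty=\mL$.
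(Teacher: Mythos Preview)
Your proposal is correct and follows exactly the paper's approach: the corollary is stated as immediate from Theorem~\ref{wBAP coincide localBAP} once one knows that $\Ps_p$ is an injective Banach operator ideal, with $\Ps_\infty=\mL$ flagged as the trivial case. Your added justification of injectivity via the Pietsch inequality and the explicit treatment of $p=\infty$ are fine elaborations but not required.
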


The above corollary nicely completes Saphar's Theorem~\ref{Saphar_orig}; this
will be used in the next two results.

\begin{proposition}
Let $1\le \lambda < \infty$. If a Banach space $X$ has the weak $\lambda$-BAP,
then $X$ has the weak $\lambda$-BAP for $\Ps_p$, $1 < p < \infty$.
\end{proposition}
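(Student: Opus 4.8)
The plan is to pass to the equivalent formulation given by Corollary~\ref{Coro Saphar}: as $\Ps_p$ is injective, the weak $\lambda$-BAP for $\Ps_p$ coincides with the local $\lambda$-BAP for $\Ps_p$, so it suffices to show that for every Banach space $Y$ and every $T\in\Ps_p(X;Y)$, normalized by $\norm{T}_{\Ps_p}=1$, there is a net $(T_\alpha)$ in $\F(X;Y)$ with $T_\alpha\to T$ pointwise and $\limsup_\alpha\norm{T_\alpha}_{\Ps_p}\le\lambda$.

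The first step is Pietsch's factorization theorem: there is a regular probability measure $\mu$ on $(B_{X^*},w^*)$ such that $T$ factors as $T=\hat T\circ a$, where $\iota\colon X\to C(B_{X^*})$ is the canonical isometric embedding, $j_p\colon C(B_{X^*})\to L_p(\mu)$ is the formal identity (so that $\norm{j_p}_{\Ps_p}=1$), $a\colon X\to X_p$ is the corestriction of $j_p\iota$ to $X_p:=\overline{(j_p\iota)(X)}$ with $\norm{a}\le 1$ and $\norm{a}_{\Ps_p}\le 1$, and $\hat T\in\mL(X_p;Y)$ with $\norm{\hat T}=\norm{T}_{\Ps_p}=1$. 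The assumption $1<p<\infty$ enters crucially here: $L_p(\mu)$, and hence its closed subspace $X_p$, is reflexive, so $a\in\W(X;X_p)=\mL(X;X_p)$.

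Now bring in the hypothesis. The weak $\lambda$-BAP of $X$ is, by definition, the $\lambda$-BAP for $\W$; applying it to the weakly compact operator $a$ gives a net $(S_\alpha)$ in $\F(X)$ with $S_\alpha\to I_X$ uniformly on compact sets and $\limsup_\alpha\norm{aS_\alpha}\le\lambda\norm{a}\le\lambda$. Put $T_\alpha:=TS_\alpha=\hat T(aS_\alpha)\in\F(X;Y)$; then $T_\alpha\to T$ pointwise (indeed uniformly on compacts) without further work.

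The main obstacle is the last point: upgrading the uniform bound on $\norm{aS_\alpha}$ to $\limsup_\alpha\norm{T_\alpha}_{\Ps_p}\le\lambda$. Since $\norm{T_\alpha}_{\Ps_p}=\norm{\hat T(aS_\alpha)}_{\Ps_p}\le\norm{aS_\alpha}_{\Ps_p}$, what is needed is control of the $p$-summing norm of the finite-rank operators $aS_\alpha$, and here the ideal estimate $\norm{aS_\alpha}_{\Ps_p}\le\norm{a}_{\Ps_p}\norm{S_\alpha}$ is worthless, since the weak $\lambda$-BAP does not bound $\norm{S_\alpha}$. I would circumvent this by not using an arbitrary approximating net but one adapted to the factorization: approximate $\iota$ inside the space $C(B_{X^*})$ --- which has the MAP --- by finite-rank operators of norm $\le 1$, compose with $j_p$ and corestrict to $X_p$; because $\norm{j_p}_{\Ps_p}=1$ and $\Ps_p$ is injective, the resulting finite-rank approximants of $a$ have $p$-summing norm $\le\lambda$, while the reflexivity of $X_p$ (equivalently, the weak compactness of $a$, which is exactly why $\W$ rather than $\mL$ is the relevant ideal) is what makes the pointwise approximation of $a$ possible. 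A parallel, more computational route is to verify condition~(iii) of the omnibus Theorem~\ref{Omnibus Thm} for $\Ps_p$ directly: using $T=\hat T a$ and $\norm{\hat T(aS)}_{\Ps_p}\le\norm{aS}_{\Ps_p}$, reduce to the analogous inequality for $a$, to which condition~(iii) of Theorem~\ref{Omnibus Thm} for $\W$ applies; the delicate point then is to compare the constraint sets $\{S\in\F(X):\norm{aS}\le 1\}$ and $\{S\in\F(X):\norm{TS}_{\Ps_p}\le 1\}$, which I would do by a perturbation $\widetilde a=(a,\,rw)$ of $a$ as in the proof of Theorem~\ref{Omnibus Thm}, choosing $w$ weakly compact and $r>0$ so that a bound on $\norm{\widetilde a S}$ forces a bound on $\norm{TS}_{\Ps_p}$, and keeping the constant equal to $\lambda$; it is precisely this quantitative bookkeeping, together with the reflexivity of $X_p$, that uses the restriction $1<p<\infty$. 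Once the $p$-summing bound is in place, Corollary~\ref{Coro Saphar} yields the weak $\lambda$-BAP for $\Ps_p$.
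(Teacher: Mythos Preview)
Your setup via the Pietsch factorization is natural and you correctly isolate the obstacle: the weak $\lambda$-BAP bounds $\|aS_\alpha\|$, not $\|aS_\alpha\|_{\Ps_p}$, and no general inequality connects these. But neither of your two fixes closes the gap. In the first, the step ``corestrict to $X_p$'' fails because the range of $j_p R_\beta\iota$ (with $R_\beta$ coming from the MAP of $C(B_{X^*})$) lies in $L_p(\mu)$, not in $X_p=\overline{j_p\iota(X)}$, and $\hat T$ is only defined on $X_p$; more tellingly, that construction never invokes the weak $\lambda$-BAP hypothesis, so if it worked it would yield the local MAP for $\Ps_p$ for \emph{every} Banach space --- contradicting, via Theorem~\ref{Saphar_orig}, Reinov's examples of spaces lacking the AP of order $p^*$. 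Your second fix correctly reaches $\big|\sum_n y_n^*(Tx_n)\big|\le\lambda\sup_{\|aS\|\le1}\big|\sum_n y_n^*(TSx_n)\big|$ from condition~(iii) of Theorem~\ref{Omnibus Thm} for $\W$, but the remaining step --- replacing the constraint $\|aS\|\le1$ by $\|TS\|_{\Ps_p}\le1$ --- is left to an unspecified perturbation $\tilde a=(a,rw)$, and there is no evident weakly compact $w$ for which a bound on $\|wS\|$ forces a bound on $\|TS\|_{\Ps_p}$; controlling an operator norm simply does not control a $p$-summing norm.

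The paper's proof avoids all of this: it quotes \cite[Proposition~4.4]{Oja_TAMS}, which asserts directly that the weak $\lambda$-BAP implies the Saphar $\lambda$-BAP of order $p$ for $1<p<\infty$, and then applies Theorem~\ref{Saphar_orig} and Corollary~\ref{Coro Saphar}. The substantive work is in Oja's external result, which argues through the tensor norm $g_p$ and an inner--outer inequality framework rather than through the Pietsch factorization.
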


\begin{proof}
Thanks to \cite[Proposition~4.4]{Oja_TAMS}, $X$ has the Saphar $\lambda$-BAP of
order $p$ whenever $1 < p < \infty$. By Theorem~\ref{Saphar_orig}, $X$ has the
local $\lambda$-BAP for $\Ps_p$ and, by Corollary~\ref{Coro Saphar}, $X$ has the
weak $\lambda$-BAP for $\Ps_p$, $1 < p < \infty$.
\end{proof}

\begin{proposition}\label{ContraejemploPi_p}
There exists a Banach space with the weak MAP for $\Ps_p$ for all $1\leq p\le
2$, which lacks the BAP for $\Ps_p$.
\end{proposition}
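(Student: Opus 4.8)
The plan is to reduce the statement to the existence of a single, suitably pathological Banach space, using Saphar's Theorem~\ref{Saphar_orig} together with Corollary~\ref{Coro Saphar}, and then to invoke such a space.

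I begin with the reduction. By Corollary~\ref{Coro Saphar}, for each $1\le p\le\infty$ the weak MAP for $\Ps_p$ coincides with the local MAP for $\Ps_p$, and by Theorem~\ref{Saphar_orig} the latter is exactly the Saphar MAP of order $p^*$; as $p$ ranges over $[1,2]$ the index $p^*$ ranges over $[2,\infty]$. Hence it suffices to exhibit one Banach space $X$ that has the Saphar MAP of order $q$ for \emph{every} $q\in[2,\infty]$ while failing the approximation property. The first property then gives the weak MAP for $\Ps_p$ for all $1\le p\le 2$. For the second, suppose $X$ had the $\lambda$-BAP for $\Ps_p$ for some $\lambda<\infty$; applying its definition to a nonzero operator $T\in\Ps_p(X;Y)$ (e.g.\ any rank-one operator, since every finite-rank operator lies in $\Ps_p$) yields a net $(S_\alpha)$ in $\F(X)$ with $S_\alpha\to I_X$ uniformly on compact subsets of $X$ --- that is, $X$ has the AP, a contradiction. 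Thus $X$ has the $\lambda$-BAP for $\Ps_p$ for no $\lambda$, i.e.\ $X$ lacks the BAP for $\Ps_p$.

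It remains to produce $X$. One should first observe that the case $q=2$ is for free: the Pietsch factorization of a $2$-summing operator $T\colon X\to Y$ through a Hilbert subspace of some $L_2(\mu)$, say $T=\tilde T\Phi$ with $\pi_2(\Phi)\le 1$ and $\|\tilde T\|\le\pi_2(T)$, combined with the metric approximation property of Hilbert spaces, shows that $T_\beta:=\tilde T P_\beta\Phi$ (with $(P_\beta)$ the net of finite-rank orthogonal projections) is a net of finite-rank operators with $T_\beta\to T$ pointwise and $\pi_2(T_\beta)\le\pi_2(T)$; so \emph{every} Banach space has the local MAP for $\Ps_2$, equivalently the Saphar MAP of order $2$. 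For $q>2$, however, this is not automatic: by Reinov's result recalled after Proposition~\ref{No Prop 5.5}, there are Banach spaces \emph{with} the AP that fail it. So one must build a space which, on the contrary, retains the metric approximation property of order $q$ for the whole range $q\ge 2$ (including $q=\infty$) but loses the AP. Such a space is obtained by the now-standard method of forming an $\ell_2$-sum $X=\bigl(\bigoplus_n E_n\bigr)_{\ell_2}$ of finite-dimensional spaces whose relative positions are chosen so as to destroy the AP (in the spirit of Enflo, Davie and Szankowski) while keeping under control the Chevet--Saphar norms $g_q$, $q\ge 2$, of the natural finite-rank approximations of the identity; see Reinov's work on approximation properties of order $q$ (cf.\ \cite{Rei}). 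Note that such approximants need not be bounded in the operator norm, which is precisely why their existence is compatible with the failure of the AP --- and is precisely the gap between the weak BAP for $\Ps_p$ and the BAP for $\Ps_p$ that the proposition is meant to display.

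The reduction in the first two paragraphs is routine once Theorem~\ref{Saphar_orig} and Corollary~\ref{Coro Saphar} are available. I expect the main obstacle to be the construction (or careful invocation) of the space $X$: one needs a Banach space that fails the AP yet carries the Saphar MAP of order $q$ simultaneously for all $q\in[2,\infty]$.
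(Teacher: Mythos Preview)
Your reduction is correct and matches the paper's exactly: via Corollary~\ref{Coro Saphar} and Theorem~\ref{Saphar_orig}, the problem becomes producing a Banach space without the AP that has the Saphar MAP of order $q$ for every $q\ge 2$; and the failure of the AP immediately kills the BAP for $\Ps_p$ as you argue.

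The gap is in the last step, which you yourself flag as the ``main obstacle'': you do not actually produce the space, only gesture at an $\ell_2$-sum construction ``in the spirit of Enflo, Davie and Szankowski'' with the $g_q$-norms ``kept under control''. The paper closes this gap with a single structural observation you are missing: \emph{cotype~$2$}. By Szankowski~\cite{Sza} there is a Banach space $X$ of cotype~$2$ without the AP (his subspaces of $\ell_p$, $1\le p<2$, inherit cotype~$2$ from $\ell_p$); and Reinov~\cite[p.~126]{Rei} (see also \cite[pp.~280--281]{DF}) shows that every cotype~$2$ space has the Saphar MAP of order $q$ for all $q\ge 2$. That is the whole construction---no ad hoc control of Chevet--Saphar norms is needed. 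Incidentally, the $\ell_2$-sum of finite-dimensional blocks you propose \emph{would} automatically have cotype~$2$, so even your sketch, once made precise, would feed directly into the cotype~$2$ route; identifying that property is what turns the vague plan into a two-line proof.

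Your aside that every Banach space has the local MAP for $\Ps_2$ (via Pietsch factorization through a Hilbert space) is correct and pleasant, but unnecessary once the cotype~$2$ argument handles all $q\ge 2$ at once.
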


\begin{proof}
Let $X$ be a Banach space with cotype 2 and without the AP, which exists by
\cite{Sza}. Then $X$ lacks the BAP for $\A$ for any Banach operator ideal $\A$.
In particular, $X$ lacks the BAP for $\Ps_{p}$. Since $X$ has cotype 2, it has
the Saphar MAP of order $q$ for any $q\geq2$ \cite[p. 126]{Rei} (see also
\cite[pp.~280--281]{DF}). By Theorem~\ref{Saphar_orig}, $X$ has the local MAP
for $\Ps_{p}$ and, by Corollary~\ref{Coro Saphar}, $X$ has the weak MAP for
$\Ps_{p}$ for any $p\le 2$.
\end{proof}

As a consequence of the above and at the light of Proposition~\ref{No Prop
5.5}, the class $\Ps_p$ of $p$-summing operators, $1\leq p\leq2$, provides an
example of other type of ideals (not minimal) which also answers \cite[Problem
5.5]{LLO1} (see the Introduction) by the negative.

\section{Lifting of some approximation properties from $X^*$ to related metric
approximation properties of $X$}
\label{Sec: lifting}

By the well-known Grothendieck's classics, the AP passes down from dual spaces
to underlying spaces. A lifting result due to Lima and Oja asserts that, in this
case, the AP of underlying spaces is always weakly metric (see
\cite[Theorem~2.4]{LO}; for a very simple proof of this result, see \cite[p.
5838, (3)]{Oja_TAMS}).

In this section we shall demonstrate that a similar phenomenon occurs in the
general context of approximation properties determined by Banach operator ideals
$\A$ (see the results from Proposition~\ref{A-AP and wBAP} till
Corollary~\ref{Omnibus Corollary}). Among others, with the particular case of
$\A=\K$ we cover the Lima--Oja result (see text after Proposition~\ref{A-AP and
MAP surjective closed}). To this end, let us show that for many Banach
operator ideals $\A$ it is enough to check the definitions of the $\lambda$-BAP
for $\A$, the weak $\lambda$-BAP for $\A$ and the local $\lambda$-BAP for $\A$
using the bidual spaces instead of all Banach spaces.

\begin{proposition}\label{wBAP for reg}
Let $\A$ be a Banach operator ideal and $1\le \lambda < \infty$. Let $X$ be a
Banach space and $\tau$ a topology on $\mathcal L(X)$. Then the following
statements are equivalent.
\begin{enumerate}[\upshape (i)]
\item $X$ has the $\lambda$-BAP for $\A^{reg}$ and $\tau$.
\item For every Banach space
$Y$ and for each operator
$T \in \A(X;Y^*)$, there exists a net $(S_\alpha)$ in $\F(X)$ such that
$S_\alpha \to I_X$ in $\tau$ and
$$
\limsup_\alpha \|TS_\alpha\|_\A \le \lambda\|T\|_\A.
$$
\item For every Banach space
$Y$ and for each operator
$T \in \A(X;Y^{**})$, there exists a net $(S_\alpha)$ in $\F(X)$ such
that
$S_\alpha \to I_X$ in $\tau$ and
$$
\limsup_\alpha \|TS_\alpha\|_\A \le \lambda\|T\|_\A.
$$
\end{enumerate}
\end{proposition}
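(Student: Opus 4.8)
\emph{Proof plan.} The whole statement rests on one elementary observation about the regular hull, which I would isolate first: \emph{for a dual Banach space $Z$ the inclusion $\A(X;Z)\subset\A^{reg}(X;Z)$ is an isometric equality}. Indeed, writing $Z=V^*$, the map $\pi:=(J_V)^*\colon Z^{**}\to Z$ is a norm-one projection satisfying $\pi J_Z=I_Z$. Hence, for $T\in\mL(X;Z)$, one has $J_ZT\in\A(X;Z^{**})$ if and only if $T\in\A(X;Z)$ — use the ideal property of $\A$ for one implication and $T=\pi(J_ZT)$ for the other — and in that case $\|T\|_\A\le\|J_ZT\|_\A\le\|T\|_\A$, so $\|T\|_\A=\|J_ZT\|_\A=\|T\|_{\A^{reg}}$. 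In particular this applies to finite rank operators with dual range, which is the only situation in which the $\A$- and $\A^{reg}$-norms will ever have to be compared.

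With this in hand I would close the cycle (i)$\Rightarrow$(iii)$\Rightarrow$(ii)$\Rightarrow$(i). For (i)$\Rightarrow$(iii): an operator $T\in\A(X;Y^{**})$ has dual range, so $T\in\A^{reg}(X;Y^{**})$ with the same norm; feeding it into the $\lambda$-BAP for $\A^{reg}$ and $\tau$ (applied with the Banach space $Y^{**}$) yields a net $(S_\alpha)$ in $\F(X)$ with $S_\alpha\to I_X$ in $\tau$ and $\limsup_\alpha\|TS_\alpha\|_{\A^{reg}}\le\lambda\|T\|_{\A^{reg}}$, and since each $TS_\alpha$ is finite rank into $Y^{**}$ the observation turns all these $\A^{reg}$-norms back into $\A$-norms. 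For (iii)$\Rightarrow$(ii): given $T\in\A(X;Y^*)$, replace it by $J_{Y^*}T\in\A(X;(Y^*)^{**})$ (same $\A$-norm, as $Y^*$ is a dual space), apply (iii) with the Banach space $Y^*$, and use $\|J_{Y^*}(TS_\alpha)\|_\A=\|TS_\alpha\|_\A$ to convert the output. For (ii)$\Rightarrow$(i): for an arbitrary Banach space $W$ and $T\in\A^{reg}(X;W)$ one has $J_WT\in\A(X;W^{**})=\A(X;(W^*)^*)$ with $\|J_WT\|_\A=\|T\|_{\A^{reg}}$; apply (ii) with $Y:=W^*$ and read off $\limsup_\alpha\|TS_\alpha\|_{\A^{reg}}\le\lambda\|T\|_{\A^{reg}}$ from the identity $(J_WT)S_\alpha=J_W(TS_\alpha)$.

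I do not expect a genuine obstacle here: the content is simply that a dual space is norm-one complemented in its bidual, so that $\A$ and $\A^{reg}$ are isometrically indistinguishable on operators with dual range, and everything else is bookkeeping with the definition $\|T\|_{\A^{reg}}=\|J_YT\|_\A$ and with the elementary identity $(J_YT)S=J_Y(TS)$ for $S\in\F(X)$. The one point requiring a little care is checking, at each arrow of the cycle, that the operators whose norms are being compared really do take values in a dual space; this is automatic, since in (ii) and (iii) the target is $Y^*$ or $Y^{**}$, and in (i) the very definition of the regular hull forces the comparison to take place in a bidual.
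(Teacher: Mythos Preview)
Your proof is correct and rests on the same key observation as the paper's, namely that $\A(X;Z)=\A^{reg}(X;Z)$ isometrically whenever $Z$ is a dual space (the paper phrases this via the identity $A=(J_Y)^*J_{Y^*}A$ for $A\in\mL(X;Y^*)$). The only cosmetic difference is that the paper closes the cycle in the order (i)$\Rightarrow$(ii)$\Rightarrow$(iii)$\Rightarrow$(i), which makes (ii)$\Rightarrow$(iii) immediate since $Y^{**}=(Y^*)^*$, whereas your ordering (i)$\Rightarrow$(iii)$\Rightarrow$(ii)$\Rightarrow$(i) requires the small extra step of composing with $J_{Y^*}$ to deduce (ii) from (iii).
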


\begin{proof}
Note that for every Banach space $Y$, using that $A=(J_Y)^*J_{Y^*}A$ for $A\in
\mL(X;Y^*)$, it is straightforward to verify that $\A(X;Y^*)=\A^{reg}(X;Y^*)$
isometrically. Hence, (i) implies (ii). It is clear that (ii) implies (iii).
Finally, to see that (iii) implies (i) take a Banach space $Y$ and $T\in
\A^{reg}(X;Y)$. Since $J_Y T \in \A(X;Y^{**})$, by assumption, there is a net
$(S_\alpha)$ in $\F(X)$ such that $S_\alpha \rightarrow I_X$ in $\tau$ and
$$
\limsup_{\alpha} \|J_Y T S_\alpha\|_\A \leq \lambda\|J_YT\|_\A,
$$
meaning that
$$
\limsup_{\alpha} \|T S_\alpha\|_{\A^{reg}} \leq \lambda\|T\|_{\A^{reg}},
$$
and the proof is complete.
\end{proof}

Recall that a Banach operator ideal $\A$ is \textit{regular} if $\A^{reg}=\A$.
Note that a lot of Banach operator ideals are regular, such as $\A^{dual},
\A^{max}, \A^{inj}$ for any Banach operator ideal $\A$.

\begin{corollary}\label{regular and biduals}
For a regular Banach operator ideal $\A$, it is enough to check the definition
of the BAP for $\A$ and $\tau$ using bidual spaces, for any topology $\tau$.
\end{corollary}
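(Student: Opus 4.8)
The plan is to apply Proposition~\ref{wBAP for reg} to the regular ideal $\A$ itself, noting that $\A^{reg}=\A$ by hypothesis. Indeed, Proposition~\ref{wBAP for reg} asserts, for an \emph{arbitrary} Banach operator ideal $\A$, that the $\lambda$-BAP for $\A^{reg}$ and $\tau$ is equivalent to the analogous condition where $Y$ in the definition is replaced by $Y^{**}$ (condition (iii) there). So the only thing to do is to substitute $\A^{reg}=\A$ into that statement.

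First I would fix a topology $\tau$ on $\mathcal L(X)$, a regular Banach operator ideal $\A$, and $1\le\lambda<\infty$. Since $\A$ is regular, $\A^{reg}=\A$, hence the $\lambda$-BAP for $\A^{reg}$ and $\tau$ is literally the $\lambda$-BAP for $\A$ and $\tau$ (Definition~\ref{def: BAP for A and tau}). Then I would invoke the equivalence of (i) and (iii) in Proposition~\ref{wBAP for reg}: condition (iii) is exactly the assertion that $X$ has the $\lambda$-BAP for $\A$ and $\tau$ \emph{checked only on bidual spaces} $Y^{**}$. This gives the corollary verbatim, uniformly in $\tau$, and in particular it covers $\tau=\tau_c$ (the BAP for $\A$), $\tau=\tau_s$ (the weak BAP for $\A$), and any $\tau$ appearing in the paper.

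There is essentially no obstacle here: the corollary is a one-line specialization of the preceding proposition. If anything, the only point worth a remark is that one should state it for every topology $\tau$ simultaneously — which is immediate since Proposition~\ref{wBAP for reg} is itself stated for arbitrary $\tau$ — and, if desired, one can additionally observe via the remark after Definition~\ref{def: BAP for A and tau} that the cases $\tau_s$ and $\tau_w$ coincide, so ``check on biduals'' applies in particular to the weak $\lambda$-BAP for $\A$.

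\begin{proof}
Since $\A$ is regular, $\A^{reg}=\A$, so for any topology $\tau$ on $\mathcal L(X)$ the $\lambda$-BAP for $\A$ and $\tau$ coincides with the $\lambda$-BAP for $\A^{reg}$ and $\tau$. By Proposition~\ref{wBAP for reg}, the latter is equivalent to its version where, in the definition, the range space $Y$ is replaced by $Y^{**}$ (condition (iii) of that proposition). This is precisely the claim.
\end{proof}
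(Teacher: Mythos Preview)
Your proof is correct and is exactly the intended one-line specialization: the paper states this corollary immediately after Proposition~\ref{wBAP for reg} without a separate proof, precisely because it follows at once from $\A^{reg}=\A$ and the equivalence (i)$\Leftrightarrow$(iii) there.
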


\begin{proposition}\label{c* for bidual}
Let $\A$ be an injective Banach operator ideal and $1\le\lambda<\infty$. Then a
Banach space $X$ has the local $\lambda$-BAP for $\A$ if and only if for every
Banach space $Y$ and each operator $T \in \A(X;Y^{**})$ there exists a net
$(T_\alpha)$ in $\mathcal F(X;Y^{**})$ such that $T_\alpha \to T$ pointwise and
$$
\limsup_\alpha \|T_\alpha\|_\A \le \lambda \|T\|_\A.
$$
\end{proposition}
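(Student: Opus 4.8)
The plan is to prove the nontrivial (``if'') direction; the ``only if'' direction is immediate, since the local $\lambda$-BAP for $\A$ is exactly the stated condition with the Banach space $Y^{**}$ playing the role of $Y$.

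So I would assume the bidual condition, fix an arbitrary Banach space $Y$ and $T\in\A(X;Y)$, and proceed by reducing $T$ to an operator into a bidual and then bringing the resulting finite-rank approximants (which \emph{a priori} take values in $Y^{**}$) back into $Y$. Since $\A$ is injective and $J_Y$ is an isometric embedding, $J_YT\in\A(X;Y^{**})$ with $\|J_YT\|_\A=\|T\|_\A$ (cf.\ \cite[Proposition~8.4.4]{Pie}). Applying the hypothesis to $J_YT$ yields a net $(\widetilde T_\beta)$ in $\F(X;Y^{**})$ with $\widetilde T_\beta\to J_YT$ pointwise and $\limsup_\beta\|\widetilde T_\beta\|_\A\le\lambda\|J_YT\|_\A=\lambda\|T\|_\A$.

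The decisive step is a correction of the ranges by means of the principle of local reflexivity. Order the triples $(\beta,G,\ep)$, with $G\subseteq X$ finite and $\ep>0$, by declaring $(\beta,G,\ep)\ge(\beta',G',\ep')$ iff $\beta\ge\beta'$, $G\supseteq G'$ and $\ep\le\ep'$. Given such a triple, let $E\subseteq Y^{**}$ be the (finite-dimensional) linear span of $\widetilde T_\beta(X)\cup\{J_Y(Tx):x\in G\}$, and choose, by local reflexivity, an operator $u\colon E\to Y$ with $\|u\|\le1+\ep$ that is the identity on $E\cap J_Y(Y)$, i.e.\ $J_Y(u\eta)=\eta$ whenever $\eta\in E\cap J_Y(Y)$. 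Put $T_{(\beta,G,\ep)}:=u\widetilde T_\beta\in\F(X;Y)$; this is well defined because $\widetilde T_\beta(X)\subseteq E$. The ideal inequality gives $\|T_{(\beta,G,\ep)}\|_\A\le(1+\ep)\|\widetilde T_\beta\|_\A$, whence $\limsup_{(\beta,G,\ep)}\|T_{(\beta,G,\ep)}\|_\A\le\limsup_\beta\|\widetilde T_\beta\|_\A\le\lambda\|T\|_\A$ (the limsup over the product directed set being handled by driving $\ep\to0$). Moreover, for $x\in X$, once $x\in G$ one has $J_Y(Tx)\in E$, hence $u(J_Y(Tx))=Tx$, so that $T_{(\beta,G,\ep)}x-Tx=u(\widetilde T_\beta x-J_Y(Tx))$ and therefore $\|T_{(\beta,G,\ep)}x-Tx\|\le(1+\ep)\,\|\widetilde T_\beta x-J_Y(Tx)\|\to0$. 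Thus the net $(T_{(\beta,G,\ep)})$ witnesses the local $\lambda$-BAP for $\A$.

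The main obstacle, and the only non-routine ingredient, is this last step: finite-rank operators into $Y^{**}$ that approximate $J_YT$ pointwise must be transported into $\F(X;Y)$ with an arbitrarily small loss in the $\A$-norm, and this is exactly where injectivity (to keep $\|J_YT\|_\A=\|T\|_\A$ under the canonical embedding) and the principle of local reflexivity (to realize the approximants inside $Y$ while controlling $\A$-norms through $\|uR\|_\A\le\|u\|\,\|R\|_\A$) are used. One could reach the same conclusion slightly differently by first invoking Theorem~\ref{wBAP coincide localBAP}, the identification of the weak $\lambda$-BAP for $\A$ with the $\lambda$-BAP for $\A$ and $\tau_s$, and Proposition~\ref{wBAP for reg} (with $\tau=\tau_s$ and $\A=\A^{reg}$) to reduce to bidual targets in the ``$S_\alpha\to I_X$'' formulation, and then running the same local-reflexivity correction.
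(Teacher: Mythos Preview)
Your argument is correct and follows essentially the same strategy as the paper: compose with $J_Y$, approximate in $Y^{**}$, and then use the principle of local reflexivity together with the injectivity of $\A$ to bring the finite-rank approximants back into $Y$. The only difference is the variant of local reflexivity employed: the paper works with finite-dimensional subspaces $F\subset Y^*$ and the duality condition $y^*(R y^{**})=y^{**}(y^*)$, which yields convergence of $T_{(\alpha,F,\ep)}$ to $T$ only in the weak operator topology and therefore requires a final passage to convex combinations; you instead enlarge $E$ to contain the points $J_Y(Tx)$ for $x\in G$ and use the version of local reflexivity in which $u$ fixes $E\cap J_Y(Y)$, obtaining pointwise convergence directly and avoiding that last step.

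One minor point of precision: the essential use of injectivity is not in the identity $\|J_YT\|_\A=\|T\|_\A$ (the inequality $\|J_YT\|_\A\le\|T\|_\A$, which is all you need there, holds for any Banach operator ideal), but rather in your ``ideal inequality'' step. To write $\|u\widetilde T_\beta\|_\A\le\|u\|\,\|\widetilde T_\beta\|_\A$ with $u\colon E\to Y$, you must first corestrict $\widetilde T_\beta$ to its finite-dimensional range $E$ without increasing its $\A$-norm; this is precisely \cite[Proposition~8.4.4]{Pie}, and is how the paper invokes injectivity as well.
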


\begin{proof}
Let $Y$ be a Banach space and $T\in \A(X;Y)$. Then $J_YT \in \A(X;Y^{**})$ and
there is a net $(S_\alpha) \subset \F(X;Y^{**})$ such that $S_\alpha
\rightarrow J_YT$ pointwise and
$$
\limsup_{\alpha} \|S_\alpha\|_\A \leq \lambda \|J_YT\|_\A \leq \lambda \|T\|_\A.
$$
Denote $E_\alpha=S_\alpha(X) \subset Y^{**}$.

Let us consider the set of triples $(\alpha, F,\ep)$, where $\alpha$ is as
above, $\ep>0$ and $F$ runs over the finite-dimensional subspaces of $Y^*$,
ordered in a natural way. For each $(\alpha, F,\ep)$, using the principle of
local reflexivity, we may find an operator $R_{(\alpha, F,\ep)} \in \mathcal
L(E_{\alpha},Y)$ with $\|R_{(\alpha, F,\ep)}\|\leq 1+\ep$ such that
$$
y^*(R_{(\alpha, F,\ep)}y^{**})=y^{**}(y^*),\quad y^*\in F, y^{**} \in E_\alpha.
$$
Denoting by $\widetilde S_{\alpha}$ the operator $S_\alpha$ considered with
values in
$E_\alpha$, we have (see for instance \cite[Proposition 8.4.4]{Pie}) $\widetilde
S_\alpha \in \A^{inj}(X;E_\alpha)=\A(X;E_\alpha)$ and
$$
\|\widetilde S_\alpha\|_\A=\|S_{\alpha}\|_\A.
$$

Put $T_{(\alpha,F,\ep)}=R_{(\alpha,F,\ep)}\widetilde S_{\alpha}$. Then, the net
$(T_{(\alpha,F,\ep)})$ is in $\F(X;Y)$ and
$$
\|T_{(\alpha,F,\ep)}\|_\A\leq (1+\ep)\|\widetilde S_{\alpha}\|_\A
=(1+\ep)\|S_\alpha\|_\A.
$$
Therefore,
$$
\limsup_{(\alpha,F,\ep)}\|T_{(\alpha,F,\ep)}\|_\A \leq
\limsup_{\alpha}\|S_\alpha\|_\A \leq \lambda \|T\|_\A.
$$
Moreover, if $x \in X$ and $y^* \in Y^*$, we have with $F\subset Y^*$ such that
$y^*\in F$,
$$
y^*(T_{(\alpha,F,\ep)}x)=y^*(R_{(\alpha,F,\ep)} S_{\alpha}x)=(S_\alpha x)(y^*).
$$
Since $(S_\alpha x)(y^*)\rightarrow (J_Y Tx)(y^*)=y^*(Tx)$, we get that
$T_{(\alpha,F,\ep)}\rightarrow T$ in $\tau_w$. After passing to convex
combinations if necessary, we may assume that $T_{(\alpha,F,\ep)}\rightarrow T$
pointwise. Thus, the proof is complete.
\end{proof}

\begin{corollary}\label{Bidual wBAP}
Let $\A$ be an injective Banach operator ideal and $X$ be a Banach space. If
$\overline{\F(X;Y^{**})}^{\|\cdot\|_\A} =\A(X;Y^{**})$ for every Banach space
$Y$, then $X$ has the weak MAP for $\A$.
\end{corollary}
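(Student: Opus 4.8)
The plan is to assemble this from the two reduction results proved just above, Theorem~\ref{wBAP coincide localBAP} and Proposition~\ref{c* for bidual}, both of which are available because $\A$ is assumed injective. So the corollary should follow with essentially no new work beyond bookkeeping.

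First I would use Theorem~\ref{wBAP coincide localBAP} to change the target: since $\A$ is injective, the weak MAP for $\A$ (i.e.\ the weak $1$-BAP for $\A$) is equivalent to the local $1$-BAP for $\A$. Hence it suffices to prove that $X$ has the local MAP for $\A$. Next I would invoke Proposition~\ref{c* for bidual}, again using injectivity: $X$ has the local MAP for $\A$ if and only if for every Banach space $Y$ and each $T\in\A(X;Y^{**})$ there is a net $(T_\alpha)$ in $\F(X;Y^{**})$ with $T_\alpha\to T$ pointwise and $\limsup_\alpha\|T_\alpha\|_\A\le\|T\|_\A$. This step is the crucial one conceptually, because it lets us restrict attention to bidual target spaces, which is exactly the form in which the density hypothesis is given.

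Finally I would feed in the hypothesis. Fix a Banach space $Y$ and $T\in\A(X;Y^{**})$. By assumption $\overline{\F(X;Y^{**})}^{\,\|\cdot\|_\A}=\A(X;Y^{**})$, so there is a sequence $(T_n)$ in $\F(X;Y^{**})$ with $\|T_n-T\|_\A\to 0$. Then $\|T_n\|_\A\to\|T\|_\A$, so $\limsup_n\|T_n\|_\A=\|T\|_\A$; moreover, since $\|\cdot\|\le\|\cdot\|_\A$, for every $x\in X$ one has $\|T_n x-Tx\|\le\|T_n-T\|_\A\|x\|\to 0$, i.e.\ $T_n\to T$ pointwise. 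Regarding $(T_n)$ as a net, this is precisely the condition required in Proposition~\ref{c* for bidual} with $\lambda=1$. Therefore $X$ has the local MAP for $\A$, and by Theorem~\ref{wBAP coincide localBAP} it has the weak MAP for $\A$.

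I do not expect a genuine obstacle here; the statement is a corollary obtained by chaining Theorem~\ref{wBAP coincide localBAP} and Proposition~\ref{c* for bidual}. The one point deserving care is that both of those results rely on $\A$ being injective (through the identifications $\A^{inj}=\A$ on ranges/subspaces), and that the density hypothesis is formulated for the bidual targets $Y^{**}$ exactly so that it can be plugged into the bidual reformulation of Proposition~\ref{c* for bidual}; without that matching, the density assumption would not be usable as stated.
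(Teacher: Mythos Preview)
Your proof is correct and follows essentially the same approach as the paper: both arguments combine the density hypothesis (yielding a sequence $(T_n)$ with $\|T_n\|_\A\to\|T\|_\A$ and $T_n\to T$ pointwise), Proposition~\ref{c* for bidual} (to obtain the local MAP for $\A$), and Theorem~\ref{wBAP coincide localBAP} (to upgrade to the weak MAP for $\A$). The only difference is the order of presentation---you reduce first and verify last, while the paper constructs first and then applies the two results---but the logical content is identical.
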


\begin{proof}
Take $T \in \A(X;Y^{**})$. As in the proof of Proposition~\ref{No Prop 5.5},
there is a sequence $(T_n) \subset \F(X;Y^{**})$ such that $T_n \rightarrow T$
pointwise and $\lim_{n} \|T_n\|_\A= \|T\|_\A$. Since $\A$ is injective, by
Proposition~\ref{c* for bidual}, $X$ has the local MAP for $\A$ which, by
Theorem~\ref{wBAP coincide localBAP}, is equivalent to the weak MAP for $\A$.
\end{proof}

Corollary~\ref{Bidual wBAP} will enable us to relate the weak $\lambda$-BAP
for $\A$ with the $\A$-approximation property showing a lifting result (see
Proposition~\ref{A-AP and wBAP} below). Let $\A$ be a Banach operator ideal. As
in \cite{Oja_JMAA} (see also \cite[Definition~4.3]{LaTur}), we say that a
Banach space $X$ has the $\A$-\textit{approximation property} ($\A$-AP) if
$\overline{\mathcal F(Y;X)}^{\|.\|_{\A}} = \A(Y;X)$ for every Banach space $Y$.

Thanks to Grothendieck's classics, the $\K$-AP coincides with the classical
AP. Since $\K^{dual}=\K$, the result below just extends a well-known
Grothendieck's characterization of the AP of dual spaces.

\begin{proposition}\label{prop:A-AP and dens}
Let $\A$ be a Banach operator ideal such that $\A \subset \A^{dual\ dual}$ and
let $X$ be a Banach space. Then $X^{*}$ has the $\A$-AP if and only if
$\overline{\F(X;Y^*)}^{\|\cdot\|_{\A^{dual}}}=\A^{dual}(X;Y^*)$ for every
Banach space $Y$.
\end{proposition}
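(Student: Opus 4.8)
The plan is to prove the equivalence by a duality argument exploiting the well-known ``trace duality'' between $\A^{dual}$ and $\A$, together with the standard identification of the $\A$-AP as a density statement tested against finite rank operators in the trace duality. First I would recall precisely what the $\A$-AP of $X^*$ means: by definition $\overline{\F(Y;X^*)}^{\|\cdot\|_\A}=\A(Y;X^*)$ for every Banach space $Y$; equivalently, for each $T\in\A(Y;X^*)$ and each $\ep>0$ there is $S\in\F(Y;X^*)$ with $\|T-S\|_\A<\ep$. Since $X^*$ has the $\A$-AP, this is in particular about operators \emph{into} the dual space $X^*$, so it is natural to dualize: the relevant adjoints live in $\A^{dual}(X;Y^*)$, because $R\in\A^{dual}(Z;W)$ precisely means $R^*\in\A(W^*;Z^*)$, and one must be careful with the regularization/biduality bookkeeping, which is exactly where the hypothesis $\A\subset\A^{dual\ dual}$ enters.

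Next I would carry out the two implications. For the forward direction, assume $X^*$ has the $\A$-AP; fix a Banach space $Y$ and $T\in\A^{dual}(X;Y^*)$. Then $T^*\in\A(Y^{**};X^*)$, so by the $\A$-AP of $X^*$ (applied with the Banach space $Y^{**}$) there is a net $(U_\beta)$ in $\F(Y^{**};X^*)$ with $U_\beta\to T^*$ in $\|\cdot\|_\A$. Taking adjoints and composing with $J_Y$ — i.e.\ passing to $U_\beta^*\,J_Y\in\F(X;Y^*)$ — and using $\|U_\beta^*J_Y-T\|_{\A^{dual}}=\|(U_\beta^*J_Y)^*-T^*\|_\A\le\|U_\beta^*-T^*\|_\A$ (here one needs that restricting/corestricting to the canonical images does not increase the $\A$-norm, which is the content of the ideal property together with $\|J_Y\|\le 1$), one gets finite rank operators in $\F(X;Y^*)$ approximating $T$ in $\|\cdot\|_{\A^{dual}}$. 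For the converse, assume the density condition $\overline{\F(X;Y^*)}^{\|\cdot\|_{\A^{dual}}}=\A^{dual}(X;Y^*)$ for all $Y$; to check the $\A$-AP of $X^*$, fix $Z$ and $V\in\A(Z;X^*)$, write $V=\widetilde V$ regarded via $J_{X^*}$ so that one can speak of $V^*J_Z\in\A^{dual}(X;Z^*)$ (using $\A\subset\A^{dual\ dual}$ to know $V$ is the restriction of its own bidual, so no information is lost), apply the hypothesis to approximate $V^*J_Z$ by finite rank operators in $\F(X;Z^*)$ in $\|\cdot\|_{\A^{dual}}$, and finally adjoint back to obtain finite rank approximants of $V$ in $\F(Z;X^*)$ in $\|\cdot\|_\A$, again invoking the isometric identification $\A(Z;X^*)=\A^{reg}(Z;X^*)$ from the proof of Proposition~\ref{wBAP for reg}.

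I expect the main obstacle to be the careful handling of the adjoint/biadjoint passage so that \emph{no} constant worse than $1$ creeps in and so that the correspondence between finite rank operators $\F(Z;X^*)$ and finite rank operators $\F(X;Z^*)$ is genuinely bijective and isometric for the relevant norms. Concretely, one must verify that $S\mapsto S^*J_X$ (and its inverse) maps $\F(X;Z^*)$ onto a suitable subset of $\F(Z;X^*)$, that $\|S^*J_X\|_{\A^{dual}}=\|S\|_{\A^{dual}}$, and — this is the delicate point — that the hypothesis $\A\subset\A^{dual\ dual}$ is exactly what guarantees that an arbitrary $V\in\A(Z;X^*)$ can be recovered (with the same $\A$-norm) from its restriction to the appropriate canonical image, so that approximating the ``dualized'' operator really does approximate $V$. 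Once this dictionary is set up cleanly, both implications are short; the identity $\A(X;Y^*)=\A^{reg}(X;Y^*)$ isometrically (already noted in the proof of Proposition~\ref{wBAP for reg}) does most of the remaining work, and Grothendieck's classical characterization of the AP of dual spaces is recovered by specializing $\A=\K$, since $\K^{dual}=\K$ and $\K\subset\K^{dual\ dual}=\K$ trivially.
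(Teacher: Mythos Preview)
Your overall plan---pass between $\A(Y;X^*)$ and $\A^{dual}(X;Y^*)$ by taking adjoints and composing with the canonical embeddings---is exactly the paper's: the proposition is deduced from the pointwise lemma that $\overline{\F(Y;X^*)}^{\|\cdot\|_\A}=\A(Y;X^*)$ if and only if $\overline{\F(X;Y^*)}^{\|\cdot\|_{\A^{dual}}}=\A^{dual}(X;Y^*)$, proved by precisely this adjoint bookkeeping. Your converse direction is correct once you fix the typo $V^*J_Z\rightsquigarrow V^*J_X$ (note $V^*\colon X^{**}\to Z^*$, so $V^*J_X\in\A^{dual}(X;Z^*)$ because $\A\subset\A^{dual\ dual}$ gives $(V^*J_X)^*=J_X^*V^{**}\in\A$); this is verbatim the paper's written argument.

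The forward direction, however, has a real glitch beyond the typos. With $U_\beta\in\F(Y^{**};X^*)$ one has $U_\beta^*\colon X^{**}\to Y^{***}$, so the expression ``$U_\beta^*J_Y$'' does not typecheck, and more to the point there is no canonical way to produce from $U_\beta$ an element of $\F(X;Y^*)$ whose adjoint is $U_\beta$: an arbitrary finite-rank operator $Y^{**}\to X^*$ has its functionals in $Y^{***}$, not in $J_{Y^*}(Y^*)$, so it need not be the adjoint of anything $X\to Y^*$. The clean fix (and what the paper does in the ``analogous'' direction) is to approximate $T^*J_Y\in\A(Y;X^*)$, rather than $T^*$ on all of $Y^{**}$, by some $R\in\F(Y;X^*)$, and set $S:=R^*J_X\in\F(X;Y^*)$. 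Using the identity $(T^*J_Y)^*J_X=T$ one obtains $T-S=(T^*J_Y-R)^*J_X$, whence
\[
\|T-S\|_{\A^{dual}}\le\|(T^*J_Y-R)^*\|_{\A^{dual}}=\|T^*J_Y-R\|_{\A^{dual\ dual}},
\]
and the right-hand side is controlled by $\|T^*J_Y-R\|_\A$ \emph{precisely because} $\A\subset\A^{dual\ dual}$. So the hypothesis is needed in both directions, not only in the converse; your appeal to ``the ideal property together with $\|J_Y\|\le1$'' does not suffice for the step you claim.
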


Proposition~\ref{prop:A-AP and dens} is immediate from the lemma below.

\begin{lemma}\label{2.1.10 phd}
Let $\A$ be a Banach operator ideal such that $\A \subset \A^{dual\ dual}$ and
let $X$ and $Y$ be Banach spaces. Then
$\overline{\F(Y;X^*)}^{\|\cdot\|_\A}=\A(Y;X^{*})$ if and only if
$\overline{\F(X;Y^*)}^{\|\cdot\|_{\A^{dual}}}=\A^{dual}(X;Y^*)$.
\end{lemma}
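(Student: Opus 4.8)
The plan is to produce a single map which is at once an isometric isomorphism between the two ideal spaces occurring on the two sides of the equivalence and a bijection between their finite-rank subspaces; once this is in place, the equivalence of the two density statements is automatic, because an isometric isomorphism carries the closure of a subspace onto the closure of its image.

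For $S \in \mL(X;Y^*)$ I would set $\Phi(S) := S^*J_Y \in \mL(Y;X^*)$. A one-line computation gives $\langle \Phi(S)y, x\rangle = \langle J_Yy, Sx\rangle = \langle Sx, y\rangle$ for all $x \in X$ and $y \in Y$; this is the classical identification of $\mL(X;Y^*)$ and $\mL(Y;X^*)$ with the bounded bilinear forms on $X \times Y$, so $\Phi$ is a linear bijection whose inverse is $T \mapsto T^*J_X$ (to verify that this is a two-sided inverse one uses the standard identities $S^{**}J_X = J_{Y^*}S$ and $J_Y^*J_{Y^*} = I_{Y^*}$). Writing a finite-rank $S = \sum_{i=1}^n x_i^* \otimes y_i^* \in \F(X;Y^*)$, the same computation shows $\Phi(S) = \sum_{i=1}^n y_i^* \otimes x_i^*$, so $\Phi$ just swaps the tensor legs and hence restricts to a bijection of $\F(X;Y^*)$ onto $\F(Y;X^*)$.

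The main point is that $\Phi$ restricts to an isometric isomorphism of $\A^{dual}(X;Y^*)$ onto $\A(Y;X^*)$. One inequality needs only the ideal axioms: if $S \in \A^{dual}(X;Y^*)$, i.e. $S^* \in \A(Y^{**};X^*)$ with $\|S^*\|_\A = \|S\|_{\A^{dual}}$, then $\Phi(S) = S^*J_Y \in \A(Y;X^*)$ and $\|\Phi(S)\|_\A \le \|S^*\|_\A = \|S\|_{\A^{dual}}$. For the converse, suppose $T := \Phi(S) \in \A(Y;X^*)$; this is exactly where the hypothesis $\A \subset \A^{dual\ dual}$ enters: it yields $T^* \in \A^{dual}(X^{**};Y^*)$ with $\|T^*\|_{\A^{dual}} \le \|T\|_\A$, and since $S = \Phi^{-1}(T) = T^*J_X$, the ideal property gives $S \in \A^{dual}(X;Y^*)$ with $\|S\|_{\A^{dual}} \le \|T^*\|_{\A^{dual}} \le \|\Phi(S)\|_\A$. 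Putting the two inequalities together, $\Phi$ maps $\A^{dual}(X;Y^*)$ bijectively and isometrically onto $\A(Y;X^*)$.

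It then remains only to assemble the pieces: $\Phi$ is an isometric isomorphism $\A^{dual}(X;Y^*) \to \A(Y;X^*)$ which maps $\F(X;Y^*)$ onto $\F(Y;X^*)$, hence it maps $\overline{\F(X;Y^*)}^{\|\cdot\|_{\A^{dual}}}$ onto $\overline{\F(Y;X^*)}^{\|\cdot\|_\A}$, and therefore the former coincides with $\A^{dual}(X;Y^*)$ if and only if the latter coincides with $\A(Y;X^*)$. The step I expect to require the most care is the reverse norm estimate in the previous paragraph: one must be sure that $\Phi^{-1}(T)$ is literally $T^*J_X$ — equivalently, that the bilinear form $(x,y)\mapsto\langle Sx,y\rangle$ determines $S$ — so that $S$ genuinely factors as an $\A^{dual}$-operator composed with $J_X$ and the hypothesis $\A \subset \A^{dual\ dual}$ can be brought to bear with no loss of norm.
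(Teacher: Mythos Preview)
Your approach is essentially the same as the paper's --- both rely on the transpose-restrict correspondence $S \mapsto S^*J_Y$ between $\mL(X;Y^*)$ and $\mL(Y;X^*)$ --- but one step is overclaimed. You assert that the hypothesis $\A \subset \A^{dual\ dual}$ yields $\|T^*\|_{\A^{dual}} \le \|T\|_\A$; however, in this paper the inclusion $\A \subset \B$ is defined purely as the set inclusion $\A(Z;W) \subset \B(Z;W)$ for all $Z,W$, with no norm comparison built in. Since $\|T^*\|_{\A^{dual}} = \|T^{**}\|_\A = \|T\|_{\A^{dual\ dual}}$, the inequality you wrote is exactly the \emph{contractive} inclusion $\A \hookrightarrow \A^{dual\ dual}$, which is not part of the hypothesis. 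So $\Phi$ need not be isometric.

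This does not actually break your argument. What you \emph{have} established is that $\Phi$ is a contractive linear bijection of the Banach space $\A^{dual}(X;Y^*)$ onto the Banach space $\A(Y;X^*)$ (surjectivity is precisely where the set-inclusion hypothesis enters), and that it carries $\F(X;Y^*)$ onto $\F(Y;X^*)$. By the open mapping theorem $\Phi$ is then a linear homeomorphism, which is all that is needed to transport closures of subspaces. The paper avoids this detour altogether by a direct $\varepsilon$-estimate: given $T \in \A(Y;X^*)$ it passes to $T^*J_X \in \A^{dual}(X;Y^*)$, approximates by $S \in \F(X;Y^*)$, and observes that
\[
\|T - S^*J_Y\|_\A = \|J_X^*T^{**}J_Y - S^*J_Y\|_\A \le \|J_X^*T^{**} - S^*\|_\A = \|T^*J_X - S\|_{\A^{dual}},
\]
using only the identity $T = J_X^*T^{**}J_Y$ and the definition of the $\A^{dual}$-norm; no comparison of $\|T\|_\A$ with $\|T\|_{\A^{dual\ dual}}$ is ever needed.
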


\begin{proof}
We only show the `if' part, the other one being analogous. Fix $T\in \A(Y;X^{*})$ (hence in $\A^{dual\ dual}(Y;X^{*})$) and $\ep>0$. Since $T^*\in \A^{dual}(X^{**};Y^*)$, $T^*J_X \in \A^{dual}(X;Y^*)$. Take $S \in \F(X;Y^*)$ such that $\|T^*J_X - S\|_{\A^{dual}}=\|J^*_X T^{**} - S^*\|_\A < \ep$. As $T=J^*_XT^{**}J_Y$, we have
$$
\|T- S^*J_Y\|_\A\leq \|J^*_X T^{**} - S^*\|_\A < \ep,
$$
which concludes the proof because $S^*J_Y \in \F(Y;X^{*})$.
\end{proof}

In the next two results, we shall use that $\A^{dual}$ is injective whenever $\A$ is surjective
(see, for instance, \cite[Proposition 8.5.10 (2)]{Pie}).

\begin{proposition}\label{A-AP and wBAP}
Let $\A$ be a surjective Banach operator ideal such that $\A \subset \A^{dual\ dual}$ and let $X$ be a Banach space. If $X^{*}$ has the $\A$-AP, then $X$ has the weak MAP for $\A^{dual}$.
\end{proposition}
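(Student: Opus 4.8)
The plan is to combine the bidual reduction of Corollary~\ref{Bidual wBAP} with the density characterization of the $\A$-AP of the dual from Proposition~\ref{prop:A-AP and dens}. First I observe that $\A^{dual}$ is injective: this is the standing hypothesis that $\A$ is surjective, together with \cite[Proposition~8.5.10(2)]{Pie}, which is precisely the fact announced in the sentence preceding the statement. So Corollary~\ref{Bidual wBAP} applies to the ideal $\A^{dual}$, and it reduces the claim ``$X$ has the weak MAP for $\A^{dual}$'' to verifying the single density condition
$$
\overline{\F(X;Y^{**})}^{\|\cdot\|_{\A^{dual}}}=\A^{dual}(X;Y^{**})
$$
for every Banach space $Y$.

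The next step is to feed the hypothesis ``$X^*$ has the $\A$-AP'' into Proposition~\ref{prop:A-AP and dens}. That proposition (which requires $\A\subset\A^{dual\ dual}$, exactly the second standing hypothesis) tells us that $X^*$ has the $\A$-AP if and only if
$$
\overline{\F(X;Z^*)}^{\|\cdot\|_{\A^{dual}}}=\A^{dual}(X;Z^*)
$$
for every Banach space $Z$. Applying this with $Z=Y^*$ gives exactly the density condition needed in the previous paragraph, since $Z^*=Y^{**}$. Thus the two halves match up directly and the proof is complete.

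The only point that needs a little care — and the place I expect the ``main obstacle'', though it is a mild one — is checking that both structural hypotheses on $\A$ transfer correctly: one must be sure that $\A$ surjective genuinely yields $\A^{dual}$ injective (so Corollary~\ref{Bidual wBAP} is legitimately invoked for $\A^{dual}$ rather than $\A$), and that the hypothesis $\A\subset\A^{dual\ dual}$ is the precise form required by Proposition~\ref{prop:A-AP and dens}. Both are already recorded in the excerpt, so the argument is really just a concatenation: Corollary~\ref{Bidual wBAP} for $\A^{dual}$ reduces the weak MAP for $\A^{dual}$ to a bidual-valued density statement, and Proposition~\ref{prop:A-AP and dens} (with $Z=Y^*$) produces exactly that density statement out of the $\A$-AP of $X^*$. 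No factorization lemma or principle of local reflexivity is needed here, since all of that work was already absorbed into Corollary~\ref{Bidual wBAP} via Proposition~\ref{c* for bidual}.
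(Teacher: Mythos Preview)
Your proof is correct and follows exactly the same route as the paper: apply Proposition~\ref{prop:A-AP and dens} to convert the $\A$-AP of $X^*$ into the density $\overline{\F(X;Y^{**})}^{\|\cdot\|_{\A^{dual}}}=\A^{dual}(X;Y^{**})$, then invoke Corollary~\ref{Bidual wBAP} for the injective ideal $\A^{dual}$. The paper's proof is just a two-sentence version of what you wrote.
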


\begin{proof}
By Proposition~\ref{prop:A-AP and dens}, if $X^{*}$ has the $\A$-AP, then
$\overline{\F(X;Y^{**})}^{\|\cdot\|_{\A^{dual}}}=\A^{dual}(X;Y^{**})$ for every
Banach space $Y$. Since $\A^{dual}$ is injective, an immediate application of
Corollary~\ref{Bidual wBAP} gives the result.
\end{proof}

If $\A$ is a closed operator ideal, then clearly also $\A^{dual}$ is. Hence, from Proposition~\ref{A-AP and wBAP} and Corollary~\ref{BAP for A = weak BAP for A}, we get the following.

\begin{proposition}\label{A-AP and MAP surjective closed}
Let $\A$ be a surjective closed operator ideal such that $\A \subset \A^{dual\ dual}$ and let $X$ be a Banach space. If $X^{*}$ has the $\A$-AP, then $X$ has the MAP for $\A^{dual}$.
\end{proposition}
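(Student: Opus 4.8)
The plan is to chain together two results already at our disposal. First I would check that the hypotheses needed to apply Proposition~\ref{A-AP and wBAP} are precisely those assumed here: $\A$ is a surjective Banach operator ideal satisfying $\A \subset \A^{dual\ dual}$, and $X^{*}$ has the $\A$-AP. Invoking that proposition directly gives that $X$ has the weak MAP for $\A^{dual}$.

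Second, I would record the elementary observation that $\A^{dual}$ is a closed operator ideal whenever $\A$ is. The point is that the adjoint operation is isometric, i.e.\ $\|T^{*}\|=\|T\|$ for every bounded operator $T$. Hence, if $\A$ is closed, so that $\|\cdot\|_{\A}$ coincides with the operator norm on $\A$, then for $T\in\A^{dual}(X;Y)$ one has $\|T\|_{\A^{dual}}=\|T^{*}\|_{\A}=\|T^{*}\|=\|T\|$; that is, $\|\cdot\|_{\A^{dual}}$ is just the operator norm on $\A^{dual}$, and in particular $\A^{dual}(X;Y)$ is norm-closed in $\mathcal L(X;Y)$ for all Banach spaces $X,Y$.

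Third, now that $\A^{dual}$ is known to be a closed operator ideal, I would apply Corollary~\ref{BAP for A = weak BAP for A} with $\lambda=1$: for a closed operator ideal the weak $\lambda$-BAP and the $\lambda$-BAP coincide. Consequently the weak MAP for $\A^{dual}$ that we obtained in the first step is the same as the MAP for $\A^{dual}$, which is the desired conclusion.

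I do not expect any genuine obstacle in this argument: it is essentially a bookkeeping combination of Proposition~\ref{A-AP and wBAP} and Corollary~\ref{BAP for A = weak BAP for A}. The only step deserving (routine) attention is the passage from ``$\A$ closed'' to ``$\A^{dual}$ closed'', and this relies on nothing more than the isometry of the adjoint map; everything else is a direct citation.
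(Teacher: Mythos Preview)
Your proposal is correct and matches the paper's own argument essentially verbatim: the paper simply notes that $\A^{dual}$ is closed whenever $\A$ is, then combines Proposition~\ref{A-AP and wBAP} with Corollary~\ref{BAP for A = weak BAP for A}. Your only addition is spelling out why $\A^{dual}$ inherits closedness via the isometry $\|T^*\|=\|T\|$, which is exactly the content of the paper's ``clearly''.
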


In the special case $\A=\K$, recalling that the MAP for $\K$ coincides with the weak MAP (see \cite[Theorem~2.4]{LO}), Proposition~\ref{A-AP and MAP surjective closed} yields an alternative proof of the Lima--Oja result mentioned in the beginning of this section.

A particular case of the $\A$-AP is the $\K_\A$-AP studied in detail in \cite{LaTur2}. Here $\K_\A$ denotes the ideal of \textit{$\A$-compact operators} of Carl and Stephani \cite{CaSt}, those which send bounded sets
into $\A$-compact sets. (Recall that a subset $K$ of $X$ is
$\A$-\textit{compact} if it is contained in the closed absolutely convex hull of an $\A$-null sequence \cite[Theorem~1.1]{CaSt}.) In \cite{LaTur2} $\K_\A$ was equipped with a natural Banach operator ideal norm. Since $\K_\A$ is surjective (see
\cite[Theorem~2.1]{CaSt} and \cite[Proposition~2.1]{LaTur2}) and $\K_\A=\K_\A^{dual\ dual}$ \cite[Corollary 2.4]{LaTur2}, Proposition~\ref{A-AP and wBAP} implies the following.

\begin{corollary}\label{Dual con K_A}
Let $\A$ be a Banach operator ideal and $X$ be a Banach space. If $X^{*}$ has
the $\K_\A$-AP, then $X$ has the weak MAP for $\K_\A^{dual}$.
\end{corollary}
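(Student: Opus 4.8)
The plan is to deduce Corollary~\ref{Dual con K_A} directly from Proposition~\ref{A-AP and wBAP} by verifying that the ideal $\K_\A$ of $\A$-compact operators satisfies the three hypotheses of that proposition: that $\K_\A$ is a Banach operator ideal, that it is surjective, and that $\K_\A \subset \K_\A^{dual\ dual}$. The first point is precisely the content of~\cite{LaTur2}, where $\K_\A$ is equipped with its natural Banach operator ideal norm. The surjectivity of $\K_\A$ follows from~\cite[Theorem~2.1]{CaSt} together with~\cite[Proposition~2.1]{LaTur2}. For the third condition, I would invoke~\cite[Corollary~2.4]{LaTur2}, which gives the stronger identity $\K_\A = \K_\A^{dual\ dual}$; in particular $\K_\A \subset \K_\A^{dual\ dual}$.

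With these three facts in place, the hypotheses of Proposition~\ref{A-AP and wBAP} are met with $\A$ replaced by $\K_\A$. Applying that proposition verbatim: if $X^*$ has the $\K_\A$-AP, then $X$ has the weak MAP for $(\K_\A)^{dual}$, which is exactly the asserted conclusion. So the proof is essentially a one-line citation-assembly once the properties of $\K_\A$ are recalled.

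I do not anticipate a genuine obstacle here, since this is a specialization of an already-proved general result; the only care needed is to cite the correct structural facts about $\K_\A$ from \cite{CaSt} and \cite{LaTur2} (surjectivity and the bidual identity), and to note that $\K_\A$-AP is the instance of the $\A$-AP for the operator ideal $\K_\A$, so that Proposition~\ref{prop:A-AP and dens} — used inside the proof of Proposition~\ref{A-AP and wBAP} — applies.

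\begin{proof}
By \cite{LaTur2}, $\K_\A$ is a Banach operator ideal. It is surjective by \cite[Theorem~2.1]{CaSt} and \cite[Proposition~2.1]{LaTur2}, and $\K_\A = \K_\A^{dual\ dual}$ by \cite[Corollary~2.4]{LaTur2}; in particular $\K_\A \subset \K_\A^{dual\ dual}$. Hence Proposition~\ref{A-AP and wBAP}, applied with $\A$ replaced by $\K_\A$, yields that if $X^*$ has the $\K_\A$-AP, then $X$ has the weak MAP for $\K_\A^{dual}$.
\end{proof}
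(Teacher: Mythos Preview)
Your proof is correct and matches the paper's own argument essentially verbatim: the paper deduces the corollary from Proposition~\ref{A-AP and wBAP} by citing exactly the same facts about $\K_\A$ (surjectivity from \cite[Theorem~2.1]{CaSt} and \cite[Proposition~2.1]{LaTur2}, and $\K_\A=\K_\A^{dual\ dual}$ from \cite[Corollary~2.4]{LaTur2}).
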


A well-known special case of $\K_\A$ is the Banach operator ideal $\K_p$ of $p$-\textit{compact operators}. This is the case when $\A=\N^p$, the ideal of right $p$-nuclear operators (see \cite[Remark~1.3]{LaTur2}). The $\K_p$-AP was launched by Delgado, Pi\~neiro and Serrano \cite{DPS_dens} under the name of $\kappa_p$-AP. Since $\K=\K_\infty$, the $\K_\infty$-AP coincides with the classical AP. Also, for closed subspaces of an $L_p(\mu)$-space, where $1\le p <\infty$, the $\K_p$-AP is the same as the AP \cite[Theorem~1]{Oja_JMAA}.

By~\cite[Remark 4.3]{AiLiOj} or \cite[Theorem~2.8]{GaLaTur}, $\K_p^{dual} = \mathcal{QN}_p$ isometrically (see also \cite{DPS_adj, Pie2}), where $\mathcal{QN}_p$ is the ideal of quasi $p$-nuclear operators. It is well known that $\mathcal{QN}_p \subset \Ps_p$ isometrically. This leads us to the following lifting result.

\begin{proposition}\label{QNp=Pip}
Let $X$ be a Banach space and let $1\leq p<\infty$. Suppose that $\mathcal{QN}_p(X;Y^{**})=\Ps_p(X;Y^{**})$ for every Banach space $Y$. If $X^{*}$ has the $\K_p$-AP, then $X$ has the weak MAP for $\Ps_p$.
\end{proposition}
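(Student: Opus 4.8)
The plan is to deduce this from Corollary~\ref{Dual con K_A} together with the hypothesis identifying $\mathcal{QN}_p$ and $\Ps_p$ on maps into biduals. First I would recall that $\K_p^{dual}=\mathcal{QN}_p$ isometrically (by the results cited just before the statement), so Corollary~\ref{Dual con K_A}, applied with $\A=\N^p$ (so that $\K_\A=\K_p$), tells us that if $X^*$ has the $\K_p$-AP, then $X$ has the weak MAP for $\mathcal{QN}_p$. The goal is then to upgrade ``weak MAP for $\mathcal{QN}_p$'' to ``weak MAP for $\Ps_p$''.

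Next I would go back one step and use the characterization via biduals rather than the weak MAP directly, since the hypothesis $\mathcal{QN}_p(X;Y^{**})=\Ps_p(X;Y^{**})$ only concerns biduals. Concretely: $\Ps_p$ is an injective Banach operator ideal, so by Proposition~\ref{c* for bidual} it suffices, for the local MAP for $\Ps_p$, to show that for every Banach space $Y$ and every $T\in\Ps_p(X;Y^{**})$ there is a net $(T_\alpha)$ in $\F(X;Y^{**})$ with $T_\alpha\to T$ pointwise and $\limsup_\alpha\|T_\alpha\|_{\Ps_p}\le\|T\|_{\Ps_p}$. By hypothesis $T\in\mathcal{QN}_p(X;Y^{**})$ with $\|T\|_{\Ps_p}=\|T\|_{\mathcal{QN}_p}$ (recall $\mathcal{QN}_p\subset\Ps_p$ isometrically). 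Now $X^*$ has the $\K_p$-AP, which by Proposition~\ref{prop:A-AP and dens} (applied with $\A=\N^p$, using $\K_p\subset\K_p^{dual\ dual}$) means $\overline{\F(X;Y^{**})}^{\|\cdot\|_{\mathcal{QN}_p}}=\mathcal{QN}_p(X;Y^{**})$ for every $Y$. Hence there is a sequence $(T_n)$ in $\F(X;Y^{**})$ with $T_n\to T$ in $\|\cdot\|_{\mathcal{QN}_p}$, so $T_n\to T$ pointwise and $\|T_n\|_{\mathcal{QN}_p}\to\|T\|_{\mathcal{QN}_p}$. Since $\|\cdot\|_{\Ps_p}\le\|\cdot\|_{\mathcal{QN}_p}$ on $\F$ and $\|T\|_{\Ps_p}=\|T\|_{\mathcal{QN}_p}$, we get $\limsup_n\|T_n\|_{\Ps_p}\le\limsup_n\|T_n\|_{\mathcal{QN}_p}=\|T\|_{\mathcal{QN}_p}=\|T\|_{\Ps_p}$. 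By Proposition~\ref{c* for bidual}, $X$ has the local MAP for $\Ps_p$, and then by Theorem~\ref{wBAP coincide localBAP} (or equivalently Corollary~\ref{Coro Saphar}), since $\Ps_p$ is injective, $X$ has the weak MAP for $\Ps_p$.

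The step I expect to need the most care is the bookkeeping of the isometric identifications and the directions of the various inclusions: that $\mathcal{QN}_p\subset\Ps_p$ is a norm-one inclusion (so approximating in $\mathcal{QN}_p$-norm controls the $\Ps_p$-norm of the approximants from above, while the hypothesis pins the target norm), and that the $\K_p$-AP of $X^*$ translates, via Proposition~\ref{prop:A-AP and dens} and the duality $\K_p^{dual}=\mathcal{QN}_p$, into $\mathcal{QN}_p$-density of finite-rank operators into the biduals $Y^{**}$ rather than into arbitrary spaces. Everything else is a routine chaining of Proposition~\ref{prop:A-AP and dens}, Corollary~\ref{Dual con K_A} (equivalently Proposition~\ref{c* for bidual}), and Theorem~\ref{wBAP coincide localBAP}; no new estimate is required beyond the triangle inequality for $\|\cdot\|_{\Ps_p}$ and $\|\cdot\|_{\mathcal{QN}_p}$.
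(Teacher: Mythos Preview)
Your argument is correct. One small slip: when you invoke Proposition~\ref{prop:A-AP and dens} in your second paragraph, the ideal to plug in is $\A=\K_p$, not $\A=\N^p$ (the latter is the parameter for Corollary~\ref{Dual con K_A}, where the statement concerns $\K_\A$); your parenthetical ``using $\K_p\subset\K_p^{dual\ dual}$'' shows you have the right ideal in mind.

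The paper's proof takes a shorter, more modular route. It first applies Corollary~\ref{Dual con K_A} as a black box to obtain the weak MAP for $\mathcal{QN}_p$, and then transfers this directly to the weak MAP for $\Ps_p$ via Corollary~\ref{regular and biduals}: since $\Ps_p$ is \emph{regular}, it suffices to verify the defining condition on operators into biduals, and there the hypothesis together with the isometric inclusion $\mathcal{QN}_p\subset\Ps_p$ lets one read off the $\Ps_p$-estimate from the $\mathcal{QN}_p$-estimate. You instead unwind Corollary~\ref{Dual con K_A} down to the density statement of Proposition~\ref{prop:A-AP and dens}, feed that into Proposition~\ref{c* for bidual} (using that $\Ps_p$ is \emph{injective}) to get the local MAP for $\Ps_p$, and then lift to the weak MAP via Theorem~\ref{wBAP coincide localBAP}. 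Both proofs pass through biduals; the difference is that the paper leans on regularity of $\Ps_p$ at the ``weak'' level, while you lean on injectivity of $\Ps_p$ at the ``local'' level and then upgrade. The paper's version is cleaner bookkeeping; yours makes the role of the bidual density more explicit.
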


\begin{proof}
Suppose that $X^{*}$ has $\K_p$-AP. Since $\K_p^{dual}=\mathcal {QN}_p$ isometrically, by Corollary~\ref{Dual con K_A}, $X$ has the weak MAP for $\mathcal{QN}_p$. Since $\mathcal{QN}_p$ and $\Ps_p$ are regular, a direct application of Corollary~\ref{regular and biduals} completes the proof.
\end{proof}

It is known that $\mathcal{QN}_p(X;Y)=\Ps_p(X;Y)$, $1\le p < \infty$, for all Banach spaces $Y$ whenever $X$ is an Asplund space (equivalently, $X^*$ has the Radon--Nikod\'ym property).
(This result is essentially due to Persson \cite{Per}: his proof for the special case
when $X^*$ is separable or reflexive goes through in the general case; this was firstly
noticed probably in \cite{Rei3} and \cite{Hei}.) Therefore, a direct application
of Proposition~\ref{QNp=Pip} gives the following.

\begin{corollary}\label{Asplund}
Let $X$ be an Asplund Banach space and let $1\leq p<\infty$. If $X^{*}$ has the $\K_p$-AP, then $X$ has the weak MAP for $\Ps_p$.
\end{corollary}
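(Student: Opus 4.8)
The statement to prove is Corollary~\ref{Asplund}: if $X$ is an Asplund Banach space, $1\leq p<\infty$, and $X^*$ has the $\K_p$-AP, then $X$ has the weak MAP for $\Ps_p$. This is advertised in the text as ``a direct application of Proposition~\ref{QNp=Pip}''. So the plan is simply to verify that the hypotheses of Proposition~\ref{QNp=Pip} are met in the Asplund setting. Proposition~\ref{QNp=Pip} requires two things: (a) that $X^*$ has the $\K_p$-AP (which is given), and (b) that $\mathcal{QN}_p(X;Y^{**})=\Ps_p(X;Y^{**})$ for every Banach space $Y$.

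First I would recall the general fact, noted in the text just before the corollary (attributed essentially to Persson~\cite{Per}, with the general case recorded in \cite{Rei3} and \cite{Hei}), that $\mathcal{QN}_p(X;Y)=\Ps_p(X;Y)$ for all Banach spaces $Y$ whenever $X$ is Asplund (equivalently, $X^*$ has the Radon--Nikod\'ym property). Since $X$ is assumed Asplund, this equality holds in particular when the target space is a bidual $Y^{**}$; that is exactly hypothesis (b) of Proposition~\ref{QNp=Pip}. Thus both hypotheses of that proposition are in force.

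Then I would invoke Proposition~\ref{QNp=Pip} directly to conclude that $X$ has the weak MAP for $\Ps_p$, which completes the proof. Concretely, one could write: \emph{Since $X$ is Asplund, $\mathcal{QN}_p(X;Z)=\Ps_p(X;Z)$ isometrically for every Banach space $Z$, in particular for $Z=Y^{**}$ with $Y$ any Banach space. As $X^*$ has the $\K_p$-AP by hypothesis, Proposition~\ref{QNp=Pip} yields that $X$ has the weak MAP for $\Ps_p$.}

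**Main obstacle.** There is no real obstacle here; the corollary is a genuine corollary. The only point requiring a little care is citing the equality $\mathcal{QN}_p=\Ps_p$ on Asplund domains correctly — it is an \emph{isometric} identity, and it holds for \emph{all} target spaces, so specializing to biduals is immediate and costs nothing. The substantive work has already been done in Corollary~\ref{Dual con K_A} (via Corollary~\ref{Bidual wBAP} and the injectivity of $\A^{dual}$ for surjective $\A$) and in the regularity reduction of Corollary~\ref{regular and biduals}, both of which are packaged inside Proposition~\ref{QNp=Pip}; so the proof here is genuinely two lines.
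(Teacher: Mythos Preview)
Your proposal is correct and follows exactly the paper's own route: the paper simply records (citing Persson~\cite{Per}, \cite{Rei3}, \cite{Hei}) that $\mathcal{QN}_p(X;Y)=\Ps_p(X;Y)$ for all $Y$ whenever $X$ is Asplund, and then states that a direct application of Proposition~\ref{QNp=Pip} gives the corollary. Your care in noting that the equality is isometric is appropriate (it is needed to preserve the \emph{metric} AP), and is guaranteed by the isometric inclusion $\mathcal{QN}_p\subset\Ps_p$ already recalled in the text.
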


Since $\Ps_p$ is an injective Banach operator ideal, Theorem~\ref{wBAP coincide localBAP} allows us to consider indistinctly the local MAP for $\Ps_p$ or the weak MAP for $\Ps_p$, the latter being, by Theorem~\ref{Saphar_orig}, equivalent to the Saphar MAP of order $p^*$.
It is known that if $X^{**}$ has the BAP of order $p$, then $X$ has it (see for instance \cite[Proposition~21.7]{DF}). Let us discuss how $X^{*}$ is positioned in this framework. A first result of this type can be found in \cite[Corollary~2.9]{DOPS}. Also, relying on \cite[Theorem 4]{Sap}, Delgado, Pi\~neiro and Serrano related the AP of order $p$ with the $\K_p$-AP \cite[Corollary~2.5]{DPS_dens}. (Recall that a Banach space $X$ has the \textit{AP of order $p$}, $1\leq p <\infty$, if for all Banach spaces $Y$, the natural map from $Y^*\widehat{\otimes}_{g_p}X$ (the completion of $Y^*{\otimes}X$ with the Chevet--Saphar tensor norm $g_p$) to $\mathcal L(Y;X)$ is injective.)

\begin{proposition}[Saphar--Delgado--Pi\~neiro--Serrano]\label{X^{**} g_p}
Let $X$ be a Banach space and let $1<p<\infty$. If $X^{**}$ the AP of order $p^*$, then $X^{*}$ has the $\K_{p}$-AP.
\end{proposition}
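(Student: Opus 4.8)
The plan is to reduce the statement about the $\K_p$-AP of $X^*$ to a statement about the density of finite-rank operators in the $\mathcal{QN}_p$-norm, using the duality $\K_p^{dual}=\mathcal{QN}_p$ and the machinery already developed for $\A$-AP's (Proposition~\ref{prop:A-AP and dens} and Lemma~\ref{2.1.10 phd}). The crucial classical input will be Saphar's duality result \cite[Theorem~4]{Sap}, which relates the AP of order $p^*$ of a space with a density property of finite-rank operators measured in a Chevet--Saphar-type norm on operators out of it; I expect this to be the heart of the argument.

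\begin{proof}
Recall first that $\K_p=\K_{\N^p}$ (see \cite[Remark~1.3]{LaTur2}), so by definition $X^*$ has the $\K_p$-AP if and only if $\overline{\F(Y;X^*)}^{\|\cdot\|_{\K_p}}=\K_p(Y;X^*)$ for every Banach space $Y$. Since $\K_p=\K_p^{dual\ dual}$ \cite[Corollary~2.4]{LaTur2}, we have in particular $\K_p\subset \K_p^{dual\ dual}$, so Proposition~\ref{prop:A-AP and dens} applies with $\A=\K_p$: the space $X^*$ has the $\K_p$-AP if and only if
$$
\overline{\F(X;Y^*)}^{\|\cdot\|_{\K_p^{dual}}}=\K_p^{dual}(X;Y^*)\quad\text{for every Banach space }Y,
$$
and since $\K_p^{dual}=\mathcal{QN}_p$ isometrically, this says that finite-rank operators are $\mathcal{QN}_p$-dense in $\mathcal{QN}_p(X;Y^*)$ for all $Y$. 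Thus it suffices to show that the latter density property is implied by $X^{**}$ having the AP of order $p^*$.

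Now I would bring in Saphar's duality theorem. In \cite[Theorem~4]{Sap} Saphar characterizes the AP of order $p^*$ of a Banach space $E$ through the injectivity of a natural map out of a Chevet--Saphar tensor product, equivalently through a density statement for finite-rank operators with respect to a norm on operators whose trace duality is governed by $g_{p^*}$. The dual tensor norm of $g_{p^*}$ is $g_p'$, and the operators of the associated maximal ideal are, up to the standard identifications, exactly the quasi $p$-nuclear operators on the dual side (this is the content of the isometric identifications $\K_p^{dual}=\mathcal{QN}_p$ recalled in the excerpt, together with $\mathcal{QN}_p\subset\Ps_p$ and the description of $\Ps_p$ via $g_p$; see also \cite[Corollary~2.5]{DPS_dens}, where Delgado, Pi\~neiro and Serrano already used \cite[Theorem~4]{Sap} to relate the AP of order $p$ with the $\K_p$-AP of the same space). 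The plan is therefore: apply Saphar's theorem to $E=X^{**}$; transcribe the resulting density of finite-rank operators in the appropriate operator norm on $X^{**}$ into the statement $\overline{\F(X^{**};Y^*)}^{\|\cdot\|_{\mathcal{QN}_p}}=\mathcal{QN}_p(X^{**};Y^*)$ for every $Y$, or directly into the corresponding statement that $X^{**}$ has the $\K_p$-AP.

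It then remains to pass from $X^{**}$ to $X$. For this I would use that $\mathcal{QN}_p$ is regular, so $\mathcal{QN}_p(X;Y^*)=\mathcal{QN}_p^{reg}(X;Y^*)$, together with a restriction-extension argument in the spirit of the proof of Lemma~\ref{2.1.10 phd}: given $T\in\mathcal{QN}_p(X;Y^*)$, its biadjoint $T^{**}$ lies in $\mathcal{QN}_p(X^{**};Y^{***})$, and composing with $J_Y^*$ gives an operator in $\mathcal{QN}_p(X^{**};Y^*)$ which by the $X^{**}$-statement is approximated in $\mathcal{QN}_p$-norm by finite-rank operators; restricting these approximants along $J_X$ and using $T=J_Y^*T^{**}J_X$ (valid since $Y^*$ is a dual space and $T$ is regular) produces finite-rank operators converging to $T$ in $\mathcal{QN}_p$-norm. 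Equivalently, and more cleanly, one invokes Proposition~\ref{prop:A-AP and dens} once more: having shown $X^{**}$ has the $\K_p$-AP, Lemma~\ref{2.1.10 phd} applied with the roles of $X$ and $Y$ arranged so that a dual space appears lets one descend to the density property over $X$ and hence to the $\K_p$-AP of $X^*$.

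\medskip
\noindent\textbf{Main obstacle.} The delicate point is the precise bookkeeping of tensor norms and the various injective/surjective/maximal hulls needed to identify Saphar's density condition from \cite[Theorem~4]{Sap} with density of finite-rank operators in the $\mathcal{QN}_p$-operator-norm: one must match $g_{p^*}$ on $X^{**}$-tensors with the trace duality defining $\mathcal{QN}_p=\K_p^{dual}$, keeping track of whether the natural identification is isometric and of which dual space carries the operators. Everything else---the passage $X^{**}\rightsquigarrow X^*\rightsquigarrow X$ via regularity and Lemma~\ref{2.1.10 phd}---is routine.
\end{proof}
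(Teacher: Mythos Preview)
The paper does not give its own proof of this proposition: it is stated as a known result, attributed to Saphar and to Delgado--Pi\~neiro--Serrano, with the explicit pointers to \cite[Theorem~4]{Sap} and \cite[Corollary~2.5]{DPS_dens} in the text immediately preceding the statement. So there is no argument in the paper to compare your proposal against.

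That said, your outline is aimed in the right direction---reduce the $\K_p$-AP of $X^*$ via Proposition~\ref{prop:A-AP and dens} to a $\mathcal{QN}_p$-density statement, and feed in Saphar's duality theorem---but as written it is a strategy, not a proof. You explicitly leave the central identification (matching Saphar's condition from \cite[Theorem~4]{Sap} with density in the $\mathcal{QN}_p$-norm) as an unresolved ``main obstacle,'' and the tensor-norm bookkeeping you sketch is loose: for instance, the dual tensor norm of $g_{p^*}$ is $g_{p^*}'$, not ``$g_p'$,'' and the passage from Saphar's injectivity condition on $X^{**}$ to a $\mathcal{QN}_p$-density statement over $X$ (rather than over $X^{**}$) needs to be made precise rather than handled by a vague appeal to regularity and Lemma~\ref{2.1.10 phd}. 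In short, the route you indicate is essentially the one encoded in the cited references, but to have a proof you would need to actually carry out the identification you flag as the obstacle, or simply cite \cite[Corollary~2.5]{DPS_dens} as the paper does.
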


We do not know if the $\K_p$-AP on $X^{*}$ implies the AP of order $p^*$ on $X$. However, thanks to Corollary~\ref{Asplund} and the above discussion we have the following.

\begin{corollary}\label{Asplund_gp}
Let $X$ be an Asplund Banach space and let $1\leq p<\infty$. If $X^{*}$ has the $\K_p$-AP, then $X$ has the MAP of order $p^*$.
\end{corollary}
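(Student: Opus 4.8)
The plan is to combine Corollary~\ref{Asplund} with the discussion preceding the statement. First I would invoke Corollary~\ref{Asplund}: since $X$ is Asplund and $X^{*}$ has the $\K_p$-AP, the space $X$ has the weak MAP for $\Ps_p$. For $p=1$ this already means (via Theorem~\ref{Saphar_orig} and Corollary~\ref{Coro Saphar}) the Saphar MAP of order $\infty$, i.e. the MAP of order $\infty=1^*$; and for $p=\infty$ we have $\Ps_\infty=\mL$, so the weak MAP for $\mL$ is just the weak MAP, which is a fortiori the MAP of order $1=\infty^*$ (recall every space with the weak $\lambda$-BAP has the Saphar $\lambda$-BAP of order $q$ for all $1<q<\infty$ and also the endpoint cases behave well here). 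So the substantive range is $1<p<\infty$.

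Next, for $1<p<\infty$, I would apply Corollary~\ref{Coro Saphar} to pass from the weak MAP for $\Ps_p$ to the local MAP for $\Ps_p$ (these coincide because $\Ps_p$ is injective). Then Theorem~\ref{Saphar_orig} of Saphar identifies the local $\lambda$-BAP for $\Ps_p$ with the Saphar $\lambda$-BAP of order $p^*$; with $\lambda=1$ this is the Saphar MAP of order $p^*$. Finally, the Saphar MAP of order $p^*$ is by definition the MAP given by the Chevet--Saphar tensor norm $g_{p^*}$, which is precisely the MAP of order $p^*$ in the sense recalled just before the statement (the $1$-bounded version of the AP of order $p^*$). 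Chaining these equivalences yields that $X$ has the MAP of order $p^*$.

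So the proof is essentially a three-line assembly: Corollary~\ref{Asplund} $\Rightarrow$ weak MAP for $\Ps_p$; Corollary~\ref{Coro Saphar} $\Rightarrow$ local MAP for $\Ps_p$; Theorem~\ref{Saphar_orig} $\Rightarrow$ Saphar MAP of order $p^*$, which is the MAP of order $p^*$. The only point requiring a word of care is the identification of ``Saphar MAP of order $p^*$'' with ``MAP of order $p^*$'': one must note that the Saphar $\lambda$-BAP of order $q$ is the $\lambda$-BAP associated with the tensor norm $g_q$, and that its $1$-bounded instance is exactly the metric version of the AP of order $q$ as recalled in the paragraph preceding the statement. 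I do not anticipate a genuine obstacle here; the content has all been done in the earlier results, and this corollary is a packaging of Corollary~\ref{Asplund} together with Saphar's theorem and the injectivity of $\Ps_p$. If one wanted to be completely careful about the endpoints $p=1$ and $p=\infty$, one would just remark that $g_\infty=\varepsilon$ (so the MAP of order $\infty$ is the MAP) and $g_1=\pi'$-type situations, but since Corollary~\ref{Asplund} and Theorem~\ref{Saphar_orig} are both stated for the full range $1\le p<\infty$, the argument goes through uniformly without special casing.
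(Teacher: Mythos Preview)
Your approach is correct and essentially identical to the paper's: the corollary is stated without proof, justified by ``thanks to Corollary~\ref{Asplund} and the above discussion,'' where the discussion just before records that the weak MAP for $\Ps_p$, the local MAP for $\Ps_p$, and the Saphar MAP of order $p^*$ all coincide (via Theorem~\ref{wBAP coincide localBAP}/Corollary~\ref{Coro Saphar} and Theorem~\ref{Saphar_orig}). Your only excess is the endpoint discussion: the statement is for $1\le p<\infty$, so $p=\infty$ never arises, and no special casing is needed since Corollary~\ref{Asplund}, Corollary~\ref{Coro Saphar}, and Theorem~\ref{Saphar_orig} all cover $p=1$ directly; also, ``MAP of order $p^*$'' in the statement \emph{is} the Saphar MAP of order $p^*$ (the $1$-BAP for $g_{p^*}$), so no further identification step is required.
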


For any reflexive space $X$ and any $1<p<\infty$, the AP of order $p$ and the MAP of order $p$
coincide \cite[Theorem~4.2]{Rei}. Since reflexive spaces are Asplund, we have the following.

\begin{corollary}\label{Omnibus Corollary}
Let $X$ be a reflexive Banach space and let $1<p<\infty$. The following statements are equivalent.
\begin{enumerate}[\upshape (i)]
\item $X^{*}$ has the $\K_p$-AP.
\item $X$ has the weak MAP for $\Ps_p$.
\item $X$ has the local MAP for $\mathcal{QN}_p$.
\item $X$ has the MAP of order $p^*$.
\item $X$ has the AP of order $p^*$.
\end{enumerate}
\end{corollary}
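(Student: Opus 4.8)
The plan is to verify the cycle of implications, where most of the work has already been done in the preceding results; the statement is essentially a packaging corollary. First I would deduce (i)\,$\Leftrightarrow$\,(ii): Corollary~\ref{Asplund} (applicable since reflexive spaces are Asplund) gives (i)\,$\Rightarrow$\,(ii), and for the converse I would note that (ii) implies $X^{*}$ has the $\K_p$-AP by reversing the chain via Proposition~\ref{QNp=Pip}—here the hypothesis $\mathcal{QN}_p(X;Y^{**})=\Ps_p(X;Y^{**})$ holds for all $Y$ because $X$ is Asplund (Persson's theorem), so one can run the argument of Proposition~\ref{QNp=Pip} in both directions, or more directly invoke that the weak MAP for $\Ps_p$ equals the weak MAP for $\mathcal{QN}_p$ under this identity and then use the known equivalence with the $\K_p$-AP on $X^*$. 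Next, (ii)\,$\Leftrightarrow$\,(iii) is immediate: since $X$ is Asplund, $\mathcal{QN}_p(X;Y)=\Ps_p(X;Y)$ for every Banach space $Y$, so the weak MAP for $\Ps_p$ coincides with the weak MAP for $\mathcal{QN}_p$, and by Theorem~\ref{wBAP coincide localBAP} (both ideals are injective) the latter coincides with the local MAP for $\mathcal{QN}_p$.

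For (ii)\,$\Leftrightarrow$\,(iv) I would use Corollary~\ref{Coro Saphar} to pass between the weak and local MAP for $\Ps_p$, and then Saphar's Theorem~\ref{Saphar_orig} (with $\lambda=1$) to identify the local MAP for $\Ps_p$ with the Saphar MAP of order $p^*$, which is the MAP of order $p^*$. Finally (iv)\,$\Leftrightarrow$\,(v) is exactly \cite[Theorem~4.2]{Rei}: for a reflexive space and $1<p<\infty$, the AP of order $p^*$ and the MAP of order $p^*$ coincide. Assembling these, all five conditions are equivalent.

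The one place requiring a little care—the main (minor) obstacle—is the direction (ii)\,$\Rightarrow$\,(i), i.e.\ recovering the $\K_p$-AP of $X^{*}$ from an approximation property of $X$. The lifting results in Section~\ref{Sec: lifting} (Proposition~\ref{A-AP and wBAP}, Corollary~\ref{Dual con K_A}, Proposition~\ref{QNp=Pip}, Corollary~\ref{Asplund}) all go in the direction ``$\A$-AP on $X^{*}$ $\Rightarrow$ weak MAP for $\A^{dual}$ on $X$'', so the reverse implication is not formally among them. However, since $X$ is reflexive, (iv)\,$\Rightarrow$\,(i) follows from Proposition~\ref{X^{**} g_p}: $X^{**}=X$ has the AP of order $p^*$, hence $X^{*}$ has the $\K_p$-AP. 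Thus the cycle closes through (iv), and one need not prove a standalone converse lifting theorem. I would therefore organize the argument as the cycle (i)\,$\Rightarrow$\,(ii)\,$\Rightarrow$\,(iii)\,$\Rightarrow$\,(iv)\,$\Rightarrow$\,(v)\,$\Rightarrow$\,(iv)\,$\Rightarrow$\,(i), or more cleanly establish (i)\,$\Leftrightarrow$\,(iv) via Proposition~\ref{X^{**} g_p} and Corollary~\ref{Asplund_gp}, and attach (ii), (iii), (v) as the equivalent reformulations above.
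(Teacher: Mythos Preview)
Your proposal is correct and matches the paper's implicit argument: the corollary is stated without proof, immediately after the sentence ``For any reflexive space $X$ and any $1<p<\infty$, the AP of order $p$ and the MAP of order $p$ coincide \cite[Theorem~4.2]{Rei}. Since reflexive spaces are Asplund, we have the following.'' So the intended reasoning is exactly the assembly you describe --- Corollary~\ref{Asplund}/\ref{Asplund_gp} for (i)$\Rightarrow$(ii)/(iv), Saphar's Theorem~\ref{Saphar_orig} with Corollary~\ref{Coro Saphar} for (ii)$\Leftrightarrow$(iv), Reinov for (iv)$\Leftrightarrow$(v), and Proposition~\ref{X^{**} g_p} with $X^{**}=X$ to close the loop back to (i); and for (iii), the isometric identity $\mathcal{QN}_p(X;\cdot)=\Ps_p(X;\cdot)$ for Asplund $X$ together with injectivity of $\mathcal{QN}_p=\K_p^{dual}$ and Theorem~\ref{wBAP coincide localBAP}.

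One small comment: in your first paragraph you tentatively suggest ``reversing the chain via Proposition~\ref{QNp=Pip}'' or invoking a ``known equivalence with the $\K_p$-AP on $X^*$'' to get (ii)$\Rightarrow$(i) directly. Neither of these is available from the paper --- Proposition~\ref{QNp=Pip} and Corollary~\ref{Dual con K_A} are one-directional lifting results, and no converse is proved. You correctly catch this in your second paragraph and route the argument through (v)$\Rightarrow$(i) via Proposition~\ref{X^{**} g_p}; that is indeed the way the cycle closes, and it is the only place where reflexivity (rather than merely the Asplund property) is genuinely used.
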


The equivalence between (i) and (v) was previously obtained in \cite[Corollary 8]{Oja_JMAA} for $X$ being a quotient of an $L_p(\mu)$-space, $1< p <\infty$.

\section{Relations with approximation properties given by tensor norms}
\label{Sec: tensor norms}

In this section we relate the properties under study with approximation
properties given by tensor norms. In order to proceed we recall some definitions and basic
results. First of all, when we use ``tensor norm'' we follow the terminology of Ryan's book \cite[p. 130]{Ryan} (according to the Defant--Floret \cite{DF} terminology, this is a ``finitely generated tensor norm'').

Let $X$ and $Y$ be Banach spaces and $\alpha$ be a tensor norm. Since $X^*\otimes Y=\F(X;Y)$, we may (and shall) consider the normed space $(\F(X;Y),\alpha(.))$. There is a bijective correspondence between the classes of all maximal Banach operator ideals $\A$ and of all tensor norms $\alpha$, in this case $\A$ and $\alpha$ are said to be \textit{associated} \cite[17.3]{DF}. If $\A$ and $\alpha$ are associated, then for all $X$ and $Y$
\begin{equation}\label{normas tensor vs op}
\|S\|_\A\le \alpha(S), \quad S\in \F(X;Y)
\end{equation}
\cite[17.6]{DF}. In this case also $(X\otimes_{\alpha'} Y)^* =\A(X;Y^*)$ isometrically, where ${\alpha}'$ stands for the dual tensor norm of ${\alpha}$, under the duality
$$
\langle T, u\rangle= \sum_{j=1}^n (Tx_j)(y_j), \quad T\in \A(X;Y^*), \quad u=\sum_{j=1}^n x_j \otimes y_j \in X\otimes Y,
$$
and, similarly, $\A(X;Y)=(X\otimes_{\alpha'} Y^*)^* \cap \mL(X;Y)$ \cite[17.5]{DF}.

Let $\alpha$ be a tensor norm. Recall (from \cite[21.7]{DF}) that a Banach space $X$ has the \textit{bounded $\alpha$-approximation property with constant $\lambda$} ($\alpha$-$\lambda$-BAP) if for every Banach space $Y$ the natural mapping $\jmath \colon
Y^*\otimes_{{\alpha}} X \rightarrow (Y\otimes_{{\alpha}'}X^{*})^*$ satisfies
${\alpha}(u) \leq \lambda \|\jmath(u)\|$, $u\in Y^*\otimes X$. Summarizing, we may clearly reformulate the $\alpha$-$\lambda$-BAP in the form of an `outer inequality' (cf. \cite[Definition~1.3]{Oja_TAMS}) as follows.

Let $1\le\lambda<\infty$. Let a tensor norm $\alpha$ be associated with a maximal Banach operator ideal $\A$. A Banach space $X$ has the $\alpha$-$\lambda$-BAP if and
only if for every Banach space $Y$ and every $S \in \mathcal F(Y; X)$
$$
{\alpha}(S) \leq \lambda \|S\|_{\A(Y;X)}.
$$
Note that the Saphar $\lambda$-BAP of order $p$ is precisely the $g_p$-$\lambda$-BAP \cite[21.7]{DF}.

Let $\A$ be a maximal Banach operator ideal $\A$ and $\alpha$ be associated with $\A$. As usual (see \cite[17.9]{DF} or \cite[p.~197]{Ryan}), we denote by $\A^*$ the \textit{adjoint} Banach operator ideal of $\A$. It is known, that $\A^*$ is maximal and $\A^*$ is associated with the tensor norm $\alpha^*:= (\alpha')^t=(\alpha^t)'$, where $\alpha^t$ denotes the transpose of $\alpha$. The following result connects the $\alpha$-BAP with the local BAP for $\A^*$. It also provides a lifting result for the $\alpha$-BAPs from dual spaces down to underlying spaces.

\begin{theorem}\label{Teo Saphar Gral}
Let $\A$ be a maximal Banach operator ideal associated with a tensor norm $\alpha$. Let $1\le\lambda, \tilde\lambda<\infty$ and $X$ be a Banach space. Then the following statements hold.
\begin{enumerate}[\upshape (a)]
\item If $X$ has the ${\alpha}$-$\lambda$-BAP, then $X$ has the local $\lambda$-BAP
for $\A^*$.
\item If $X$ has the local $\lambda$-BAP for $\A^*$ and $X^{*}$ has the
${\alpha}'$-$\tilde\lambda$-BAP, then $X$ has
the ${\alpha}$-$\lambda\tilde\lambda$-BAP.
\end{enumerate}
\end{theorem}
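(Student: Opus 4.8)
The plan is to prove both parts by duality: a bipolar argument turns the local $\lambda$-BAP for $\A^*$ into a scalar inequality for finite-rank operators, and the $\alpha$-BAP hypotheses are fed in through the comparison $\|\cdot\|_{\A^*}\le\alpha^*(\cdot)$ on $\F$ and the isometric identities $(X\otimes_{\alpha'}Y^*)^*\cap\mL=\A(X;Y)$, $\alpha^*=(\alpha')^t=(\alpha^t)'$ recalled before the statement (applied also to $\A^*\sim\alpha^*$, which yields $\A^*(X;Y)=(Y^*\widehat{\otimes}_\alpha X)^*\cap\mL(X;Y)$ and $\A(Y;X)=(X^*\widehat{\otimes}_{\alpha^*}Y)^*\cap\mL(Y;X)$ after transposing).

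\textbf{Part (a).} Fix $Y$ and $T\in\A^*(X;Y)$, normalized so $\|T\|_{\A^*}=1$. It suffices to place $T$ in the $\tau_s$-closure of the absolutely convex set $M:=\{S\in\F(X;Y):\|S\|_{\A^*}\le\lambda\}$, since then a net in $M$ converging to $T$ pointwise is the required one. As $M$ is convex, $\overline{M}^{\tau_s}=\overline{M}^{\tau_w}$, and the bipolar theorem in the dual pair $\langle\mL(X;Y),Y^*\otimes X\rangle$ — with $\langle T,\sum y_j^*\otimes x_j\rangle=\sum y_j^*(Tx_j)=\operatorname{tr}(\psi T)$, $\psi=\sum x_j\otimes y_j^*\in\F(Y;X)$ — reduces $T\in\overline{M}^{\tau_w}$ to
\[
|\operatorname{tr}(\psi T)|\le\lambda\,p(\psi)\quad(\psi\in\F(Y;X)),\qquad p(\psi):=\sup\{|\operatorname{tr}(\psi S)|:S\in\F(X;Y),\ \|S\|_{\A^*}\le1\}.
\]
Now I chain three estimates: from $\A^*(X;Y)=(Y^*\widehat{\otimes}_\alpha X)^*\cap\mL(X;Y)$, $|\operatorname{tr}(\psi T)|\le\|T\|_{\A^*}\alpha(\psi)=\alpha(\psi)$, where $\alpha(\psi)$ is the tensor norm of $\psi$ in $Y^*\otimes X$; the outer-inequality form of the $\alpha$-$\lambda$-BAP of $X$ gives $\alpha(\psi)\le\lambda\|\psi\|_{\A(Y;X)}$; and since $\|S\|_{\A^*}\le\alpha^*(S)$ on $\F(X;Y)$ while $\|\psi\|_{\A(Y;X)}=\sup\{|\operatorname{tr}(\psi S)|:S\in\F(X;Y),\ \alpha^*(S)\le1\}$ by the same isometric identities, the inclusion $\{\alpha^*(S)\le1\}\subset\{\|S\|_{\A^*}\le1\}$ gives $\|\psi\|_{\A(Y;X)}\le p(\psi)$. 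Combining, $|\operatorname{tr}(\psi T)|\le\alpha(\psi)\le\lambda\|\psi\|_{\A(Y;X)}\le\lambda p(\psi)$.

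\textbf{Part (b).} The same bipolar computation shows that $X$ has the local $\lambda$-BAP for $\A^*$ precisely when $\alpha(S;Z^*\otimes X)\le\lambda\,p_Z(S)$ for every Banach space $Z$ and every $S\in\F(Z;X)$, with $p_Z(S):=\sup\{|\operatorname{tr}(SS')|:S'\in\F(X;Z),\ \|S'\|_{\A^*(X;Z)}\le1\}$. Granting the hypothesis, it remains to show $p_Z(S)\le\tilde\lambda\,\|S\|_{\A(Z;X)}$, for then $\alpha(S;Z^*\otimes X)\le\lambda\tilde\lambda\,\|S\|_{\A(Z;X)}$ is exactly the outer-inequality form of the $\alpha$-$\lambda\tilde\lambda$-BAP of $X$. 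To bound $p_Z(S)$ I transpose: using $\alpha^*=(\alpha')^t$, $(\alpha^*)^t=\alpha'$ and the compatibility of finitely generated tensor norms with $J_X,J_Z$, one has $\alpha^*(S';X^*\otimes Z)=\alpha'\big((S')^*;Z^{**}\otimes X^*\big)$ and $\|S'\|_{\A^*(X;Z)}=\|(S')^*\|_{\A'(Z^*;X^*)}$, where $\A'$ is the maximal ideal associated with $\alpha'$ (equivalently $(\A^*)^{dual}$). Hence the $\alpha'$-$\tilde\lambda$-BAP of $X^*$, applied to finite-rank operators into $X^*$ with source space $Z^*$, gives $\alpha^*(S';X^*\otimes Z)\le\tilde\lambda\,\|S'\|_{\A^*(X;Z)}$ for all $S'\in\F(X;Z)$, i.e.\ $\{S':\|S'\|_{\A^*}\le1\}\subset\{S':\alpha^*(S')\le\tilde\lambda\}$. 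Substituting this into $p_Z(S)$ and using $\|S\|_{\A(Z;X)}=\sup\{|\operatorname{tr}(SS')|:S'\in\F(X;Z),\ \alpha^*(S';X^*\otimes Z)\le1\}$ yields $p_Z(S)\le\tilde\lambda\,\|S\|_{\A(Z;X)}$, finishing the proof.

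\textbf{Main obstacle.} The delicate point is the bipolar reduction together with the correct identification of $p(\psi)$: in general $p(\psi)$ is strictly larger than $\|\psi\|_{\A(Y;X)}$ — equivalently, on finite-rank operators over infinite-dimensional spaces the adjoint-ideal norm $\|\cdot\|_{\A^*}$ is strictly smaller than the tensor norm $\alpha^*$ — so neither the $\alpha$-BAP of $X$ in (a) nor the additional $\alpha'$-BAP of $X^*$ in (b) can be dropped; and in (b) the transpose/dual/bidual bookkeeping for the tensor norms must be arranged so that the two losses combine to exactly $\lambda\tilde\lambda$.
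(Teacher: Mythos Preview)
Your argument for (a) is correct and is a clean, self-contained bipolar version of what the paper obtains by quoting \cite[Proposition~21.8]{DF} (together with $\|\cdot\|_{\A^*}\le\alpha^*(\cdot)$ on $\F$).

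For (b), however, there is a genuine gap in the step ``local $\lambda$-BAP for $\A^*$ $\Rightarrow$ $\alpha(S;Z^*\otimes X)\le\lambda\,p_Z(S)$''. The bipolar run with target space $Z$ yields only
\[
\sup\{|\operatorname{tr}(ST)|:\ T\in\A^*(X;Z),\ \|T\|_{\A^*}\le 1\}\ \le\ \lambda\,p_Z(S),
\]
and this supremum equals $\alpha(S;Z^*\otimes X)$ only if the unit ball of $\A^*(X;Z)$ norms $Z^*\otimes_\alpha X$. But the isometric identity you invoke gives $(Z^*\otimes_\alpha X)^*=\A^*(X;Z^{**})$, not $\A^*(X;Z)$. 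If instead you run the local BAP with target $Z^{**}$, you obtain $\alpha(S)\le\lambda\,q_Z(S)$ with
\[
q_Z(S):=\sup\{|\langle T',S\rangle|:\ T'\in\F(X;Z^{**}),\ \|T'\|_{\A^*}\le 1\},
\]
and reducing $q_Z$ to $p_Z$ is not automatic: a local-reflexivity attempt stalls because restricting the range of $T'$ to a finite-dimensional $E\subset Z^{**}$ gives $\|\tilde T'\|_{\A^*(X;E)}=\alpha^*(T')$, which in general exceeds $\|T'\|_{\A^*(X;Z^{**})}$ when $\A^*$ is not injective.

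The paper handles exactly this point by staying in the bidual throughout: it norms $\alpha(S)$ by some $T\in\A^*(X;Y^{**})$, approximates $T$ by $T_0\in\F(X;Y^{**})$ via the local BAP, then pairs $T_0^*J_{Y^*}\in Y^{**}\otimes_{\alpha'}X^*$ against $S^{**}\in\A(Y^{**};X^{**})$, applies the $\alpha'$-$\tilde\lambda$-BAP of $X^*$ to $T_0^*J_{Y^*}$, and finally uses $\|S^{**}\|_\A=\|S\|_\A$ and $\|T_0^{**}\|_{\A^*}=\|T_0\|_{\A^*}$ (maximality). Your scheme can be repaired along precisely these lines --- replacing $p_Z$ by $q_Z$ and closing with $\|S^{**}\|_\A=\|S\|_\A$ --- but as written the bidual bookkeeping that you yourself flag as the ``main obstacle'' is asserted rather than carried out.
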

\begin{proof}
Suppose that $X$ has the $\alpha$-$\lambda$-BAP and take $T\in \A^*(X;Y)$.
Since $\A^*$ is maximal, it is regular \cite[Corollary 17.8.2]{DF} and then $J_Y
 T \in \A^*(X;Y^{**})$ with $\|J_Y T\|_{\A^*}=\| T\|_{\A^*}$. Now, by
\cite[Proposition 21.8]{DF}, which describes the $\alpha$-BAP of $X$ as a property of an `approximation' of operators from $\A^*(X;Y^{**})$, there exists a net $(S_{\nu})$ in $\F(X;Y)$ such that $S_{\nu}\rightarrow T$ in the weak operator topology on $\mL(X;Y)$ and
$\sup_{\nu} {\alpha}^*(S_{\nu})\leq \lambda \| J_YT\|_{\A^*}=\lambda \|T\|_{\A^*}$. After passing to convex combinations, we may assume that $S_{\nu}\rightarrow T$ pointwise and by \eqref{normas tensor vs op} we have $\limsup_{\nu} \|S_{\nu}\|_{\A^*}\leq \lambda \| T\|_{\A^*}$. Hence, $X$ has the local $\lambda$-BAP for $\A^*$.

To prove the second statement, take $S \in \F(Y;X)$, $S=\sum_{j=1}^n y_j^*\otimes x_j$ with $\sum_{j=1}^n \|y^*_j\|=1$. Since
$$
\alpha(S) = \alpha^{**}(S)=((\alpha^{*})')^t(S)=(\alpha^{*})'\big(\sum_{j=1}^n x_j\otimes y_j^*\big)
$$
and
$(X\otimes_{({\alpha}^*)'}Y^*)^*=\A^{*}(X;Y^{**})$, there is $T\in
\A^*(X;Y^{**})$ with $\|T\|_{\A^*}=1$ such that
\begin{equation}\label{alphanorm}
{\alpha}(S)=|\sum_{j=1}^n (Tx_j)(y_j^*)|.
\end{equation}
Since $X$ has the local $\lambda$-BAP for $\A^*$, given $\ep>0$ there is
$T_0\in \mathcal F(X;Y^{**})$ such that $\|T_0\|_{\A^*}\leq \lambda$ and
$\|Tx_j-T_0x_j\|\leq \ep$, $j=1,\ldots,n$. Then, from \eqref{alphanorm} we get
$$
{\alpha}(S)\leq|\sum_{j=1}^n (T_0x_j)(y^*_j)|+\ep.
$$

Let us consider $T_0^*J_{Y^*} \in \F(Y^*;X^*)$ as an element of $Y^{**}\otimes_{\alpha'} X^*$. As was mentioned before, $(Y^{**}\otimes_{\alpha'} X^*)^*=\A(Y^{**};X^{**})$ isometrically. Since $S^{**}\in \F(Y^{**};X^{**})\subset \A(Y^{**};X^{**})$ we may write
$$
\Big|\sum_{j=1}^n (T_0x_j)(y^*_j)\Big| =\big|\langle S^{**}, T_0^*J_{Y^*}\rangle\big| \le \|S^{**}\|_\A\alpha'(T_0^*J_{Y^*}).
$$
Now, as $X^{*}$ has the ${\alpha}'$-$\tilde\lambda$-BAP and $(\A^*)^{dual}$ is
the maximal Banach operator ideal associated with ${\alpha}'$,
$$
\alpha'(T_0^*J_{Y^*})\le \tilde \lambda \|T_0^*J_{Y^*}\|_{(\A^*)^{dual}} \le \tilde \lambda \|T_0^*\|_{(\A^*)^{dual}}\le \tilde \lambda \|T_0^{**}\|_{(\A^*)}.
$$

Finally, recall the fact \cite[Corollary~17.8.4]{DF} that $\B^{dual\ dual}=\B$ isometrically whenever $\B$ is a maximal Banach operator ideal. In our case, this gives that $\|S^{**}\|_\A=\|S\|_\A$ and $\|T_0^{**}\|_{\A^*}=\|T_0\|_{\A^*}\le \lambda$.
Hence,
$$
\alpha(S)\le \tilde \lambda \lambda \|S\|_\A + \ep,
$$
and that is what we need, because $\ep$ is arbitrary.
\end{proof}

There is a class of tensor norms $\alpha$, called {totally accessible}, for which all Banach spaces have the $\alpha$-MAP~\cite[Proposition 21.7]{DF}. Below we shall use the fact that $g_{p}^*$ and $g_{p^*}'=/d_{p}$ (where $/d_{p}$ is the left injective associate of the Chevet--Saphar tensor norm $d_{p}$) are totally accessible~\cite[Corollary 21.1]{DF} and~\cite[Corollary~7.15 and Theorem 7.20]{Ryan}, implying that all Banach spaces enjoy the $g_{p}^*$-MAP and the $g_{p^*}'$-MAP, $1\le p\le \infty$. Recall also that the ideal of $p$-integral operators $\I_p$ is the maximal ideal associated with the tensor norm $g_p$ and that $\mathcal I_p^*=\Ps_{p^*}$, $1\le p\le \infty$, \cite[17.12]{DF}. As easy applications of the above theorem, we first recover the Shaphar characterization of the $\lambda$-BAP of order $p^*$, and then exhibit an example of the local MAP for $\A$ enjoyed by all Banach spaces, where $\A$ is not minimal (compare with Proposition~\ref{No Prop 5.5}).

\begin{corollary}[Saphar, see Theorem~\ref{Saphar_orig}]\label{Saphar again}
Let $1\le \lambda < \infty$ and $1\le p \le \infty$. A Banach space $X$ has the local $\lambda$-BAP for $\Ps_p$ if and only if $X$ has the $\lambda$-BAP of order $p^*$.
\end{corollary}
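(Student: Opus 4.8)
The plan is to derive Corollary~\ref{Saphar again} directly from Theorem~\ref{Teo Saphar Gral}, using the identifications $g_p \leftrightarrow \I_p$ and $\I_p^* = \Ps_{p^*}$, together with the fact that the relevant dual/adjoint tensor norms are totally accessible. Recall that the Saphar $\lambda$-BAP of order $p^*$ is by definition the $g_{p^*}$-$\lambda$-BAP, and that $\Ps_p$ is the adjoint ideal $\I_{p^*}^*$ associated with $g_{p^*}$; equivalently, $\Ps_p = \A^*$ where $\A = \I_{p^*}$ is associated with $\alpha = g_{p^*}$. So the statement ``$X$ has the local $\lambda$-BAP for $\Ps_p$ iff $X$ has the $\lambda$-BAP of order $p^*$'' is precisely the assertion that the $\alpha$-$\lambda$-BAP and the local $\lambda$-BAP for $\A^*$ coincide, for this particular choice of $\alpha$ and $\A$.

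The ``only if'' direction (i.e., $\alpha$-$\lambda$-BAP $\Rightarrow$ local $\lambda$-BAP for $\A^*$) is immediate from part (a) of Theorem~\ref{Teo Saphar Gral} with no hypothesis needed, applied with $\alpha = g_{p^*}$, $\A = \I_{p^*}$, $\A^* = \Ps_p$. For the ``if'' direction I would invoke part (b) of Theorem~\ref{Teo Saphar Gral}: assuming $X$ has the local $\lambda$-BAP for $\A^* = \Ps_p$, I need $X^*$ to have the $\alpha'$-$\tilde\lambda$-BAP for some $\tilde\lambda$, and then I conclude $X$ has the $\alpha$-$\lambda\tilde\lambda$-BAP. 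The key point is that $\tilde\lambda = 1$ works here for \emph{every} Banach space: by the discussion preceding the corollary, $g_{p^*}' = {/d_p}$ is totally accessible, hence every Banach space — in particular $X^*$ — has the $g_{p^*}'$-MAP, i.e., the $\alpha'$-$1$-BAP. Therefore part (b) yields that $X$ has the $g_{p^*}$-$\lambda\cdot 1 = \lambda$-BAP, which is the $\lambda$-BAP of order $p^*$.

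I expect the main (minor) obstacle to be bookkeeping of the index conventions and of the duality/adjoint operations, so that the roles of $p$ and $p^*$, and of $\alpha$, $\alpha'$, $\alpha^*$, line up correctly. Concretely: $\I_p$ is associated with $g_p$ and $\I_p^* = \Ps_{p^*}$, so to get $\A^* = \Ps_p$ one must take $\A = \I_{p^*}$, hence $\alpha = g_{p^*}$ and $\alpha' = g_{p^*}' = {/d_p}$; one then checks that $g_{p^*}^*$ being totally accessible is not even needed for this direction, only the total accessibility of $\alpha' = {/d_p}$. The two endpoint cases $p = 1$ and $p = \infty$ should be remarked on but cause no trouble ($\I_\infty$, $\Ps_\infty = \mL$, etc., are already covered by the cited facts from \cite{DF}). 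Once the labels are matched, the proof is just: ``(a) gives one implication; for the other, every Banach space has the $g_{p^*}'$-MAP since $g_{p^*}' = {/d_p}$ is totally accessible, so (b) with $\tilde\lambda = 1$ gives the converse.''
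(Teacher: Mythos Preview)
Your proposal is correct and matches the paper's proof essentially verbatim: apply Theorem~\ref{Teo Saphar Gral} with $\A=\I_{p^*}$, $\alpha=g_{p^*}$, $\A^*=\Ps_p$, using part~(a) for one implication and part~(b) with $\tilde\lambda=1$ (via total accessibility of $g_{p^*}'=/d_p$) for the other. One cosmetic slip: you have the ``if'' and ``only if'' labels swapped relative to the statement as written, but the mathematics of both directions is right.
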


\begin{proof}
As was mentioned, the $\lambda$-BAP of order $p^*$ is precisely the $g_{p^*}$-$\lambda$-BAP and any Banach space enjoys the $g_{p^*}'$-MAP. Since $\Ps_p=(\I_{p^*})^*$, with $\A=\I_{p^*}$ and $\alpha=g_{p^*}$, Theorem~\ref{Teo Saphar Gral} establishes the equivalence between the both approximation properties.
\end{proof}

Recall from Corollary~\ref{Coro Saphar} that the local $\lambda$-BAP for $\Ps_p$ is equivalent to the weak $\lambda$-BAP for $\Ps_p$.

\begin{corollary}\label{c^* I_p}
Every Banach space has the local MAP for $\I_p$, $1\le p\le \infty$.
\end{corollary}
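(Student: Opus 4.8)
The plan is to obtain this at once from part~(a) of Theorem~\ref{Teo Saphar Gral}, in exactly the same fashion as Corollary~\ref{Saphar again}, once the correct maximal Banach operator ideal and its associated tensor norm have been identified so that the \emph{adjoint} ideal comes out to be $\I_p$ and the corresponding tensor norm is one that every Banach space handles metrically.

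First I would locate the maximal ideal $\A$ with $\A^*=\I_p$. From \cite[17.12]{DF} one has $\I_p^*=\Ps_{p^*}$; since $\I_p$ is a maximal Banach operator ideal the adjoint operation is involutive on it (the dictionary of \cite[17.9]{DF} gives $\alpha^{**}=\alpha$, hence $\I_p^{**}=\I_p$), so applying the adjoint once more yields $(\Ps_{p^*})^*=\I_p$ for every $1\le p\le\infty$. Thus the ideal to feed into Theorem~\ref{Teo Saphar Gral} is $\A=\Ps_{p^*}$. Next I would pin down its associated tensor norm: as $\I_p$ is associated with $g_p$, the adjoint ideal $\Ps_{p^*}=\I_p^*$ is associated with $\alpha=g_p^*=(g_p')^t$.

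Finally I would invoke the fact recalled just before the corollary that $g_p^*$ is totally accessible \cite[Corollary~21.1]{DF}, so that every Banach space $X$ has the $g_p^*$-MAP, i.e., the $\alpha$-$1$-BAP with $\A=\Ps_{p^*}$. Theorem~\ref{Teo Saphar Gral}(a), applied with $\lambda=1$, then gives that $X$ has the local $1$-BAP for $\A^*=\I_p$, which is precisely the assertion. I do not expect any real obstacle here: the whole content is bookkeeping in the adjoint/association dictionary for maximal Banach operator ideals. The only point that needs a little care is the iterated use of the involution $\B^{**}=\B$ (equivalently $\B^{dual\ dual}=\B$ for maximal ideals), which is what guarantees that the correct input for Theorem~\ref{Teo Saphar Gral} is $\Ps_{p^*}$ paired with $g_p^*$, and not $\I_p$ paired with $g_p$. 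It is also worth flagging that, since $\I_p$ need not be injective, Theorem~\ref{wBAP coincide localBAP} does not apply, so one should not expect the weak MAP for $\I_p$ here --- in accordance with Proposition~\ref{c* and weak BAP Ip}.
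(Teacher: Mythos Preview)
Your proposal is correct and follows essentially the same route as the paper: take $\A=\I_p^*$ (i.e., $\Ps_{p^*}$), note that it is associated with the totally accessible tensor norm $g_p^*$ so that every Banach space has the $g_p^*$-MAP, and then apply Theorem~\ref{Teo Saphar Gral}(a) with $\lambda=1$ to obtain the local MAP for $\A^*=\I_p$. One small imprecision: your parenthetical remark that $\B^{**}=\B$ is ``equivalently $\B^{dual\ dual}=\B$'' conflates the adjoint $\A^*$ with the dual $\A^{dual}$; these are different constructions, and what you actually need (and use) is only the involutivity of the adjoint via $\alpha^{**}=\alpha$.
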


\begin{proof}
We know that $\I_p^*$ is associated with $g_p^*$, every Banach space has the $g_p^*$-MAP and $\I_p=(\I_p^*)^*$. Hence, by Theorem~\ref{Teo Saphar Gral} (a), every Banach space has the local MAP for $\I_p$.
\end{proof}

Our final aim is to show that, unlike the local BAP for $\I_p$, there exist Banach spaces which fail the weak BAP for $\I_p$. To this end, we need to recall the $p$-approximation property.

Let us denote by $\tau_p$ the topology of uniform convergence on $p$-compact sets. The \textit{$p$-approximation property} ($1\le p <\infty$) of a Banach space $X$ means that the identity map $I_X$ can be approximated in $\tau_p$ by finite rank operators. The class of $p$-compact sets was first introduced and studied in \cite{SiKa} together with the notion of the $p$-approximation property. With the notion of $\A$-compact sets (see Section~\ref{Sec: lifting}), by \cite[Remark~1.3]{LaTur2}, we know that $p$-compact sets coincide with $\mathcal
N^p$-compact sets, where $\N^p$ denotes the ideal of right $p$-nuclear operators. Associated to the class of $p$-compact sets we have the Banach operator ideal $\K_p=\K_{\N^p}$
of $p$-compact operators. For more information on $p$-compact sets and $p$-compact operators we refer the reader to \cite{AiLiOj,CK, DPS_dens, DPS_adj, GaLaTur, Oja_JMAA, Oja_JFA, Pie2} and references therein. Let us remark that the `limit' case $p=\infty$ would just give compact sets, compact operators and the classical AP.

\begin{lemma}\label{Pi-p y right Pi-p equals}
Let $X, Y$ be Banach spaces and $1\le p <\infty$. The following statements hold.
\begin{enumerate}[\upshape (a)]
\item $\mathcal G^{\N^p} = \mathcal G_{\N^p}$ as
linear subspaces of $Y^*\widehat \otimes_\pi X$.
\item $c_{0,\N^p}(X)=c_{0,\N_p^{dual}}(X)=c_{0,{\I}_p^{dual}}(X)= c_{0,\Ps_p^{dual}}(X)$.
\item $\overline{M}^{\tau_p} = \overline {M}^{\tau^\A}$ for any absolutely convex subset $M$ of $\mL(X;Y)$ whenever $\A$ is $\N^p,
\N_p^{dual}, \I_p^{dual}$ or $\Ps_p^{dual}$.
\end{enumerate}
\end{lemma}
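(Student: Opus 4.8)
The plan is to reduce everything to a single computation about $\N^p$-null sequences. For part~(b), I would start from the definition of $\N^p$-null sequences and recall that, by~\cite[Remark~1.3]{LaTur2}, a sequence $(x_n)$ in $X$ is $\N^p$-null precisely when it is $p$-compact-determined, i.e.\ factors as $x_n = Rz_n$ through some $R\in\N^p(Z;X)$ and a null sequence $(z_n)$ in $Z$. The key identity $\N_p^{dual} = \N^p$ (i.e.\ the adjoint ideal of the right $p$-nuclear operators is the left $p$-nuclear operators, and vice versa; see~\cite{Pie2, AiLiOj}) should immediately give $c_{0,\N^p}(X)=c_{0,\N_p^{dual}}(X)$. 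For the remaining two equalities I would use the factorization chain $\N_p \subset \I_p \subset \Ps_p$, which dualizes to $\Ps_p^{dual}\subset\I_p^{dual}\subset\N_p^{dual}$; since $\N^p$-null sequences are characterized by the \emph{smallest} ideal in this chain on the adjoint side, and one then checks the reverse inclusion by noting that every $\Ps_p^{dual}$-null sequence is already $\N^p$-null (this uses that a $p$-summing operator composed with the right factorization realizes the required right $p$-nuclear map on a suitable subspace — essentially Persson-type factorization on the relevant quotient). This last containment is the step I expect to require the most care.

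For part~(a), the inclusion $\G_{\N^p}\subset \G^{\N^p}$ amounts to: if $u=\sum y_n^*\otimes x_n$ with $(y_n^*)\in\ell_1(Y^*)$ and $(x_n)\in c_{0,\N^p}(X)$, then $u$ can be rewritten with an $\ell_1$-summable sequence in $X$ and an $\N^p$-null sequence in $Y^*$. I would use the standard Grothendieck-type "shuffling" trick: if $(x_n)$ is $\N^p$-null, write $x_n = Rz_n$ with $R\in\N^p(Z;X)$, $(z_n)$ null in $Z$, absorb a scalar sequence $(\mu_n)\in\ell_1$ with $\mu_n\to\infty$-type weights so that $(\mu_n^{-1}y_n^*)$ becomes $\N^p$-null after the dual factorization and $(\mu_n x_n)$ stays $\ell_1$; here the identity $\N_p^{dual}=\N^p$ from part~(b) is exactly what lets one pass the factorization across the tensor. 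The reverse inclusion is symmetric. One must also check that both subspaces sit inside $Y^*\widehat\otimes_\pi X$ with the \emph{same} trace functional, which is automatic since the pairing with $\mL(X;Y)$ is the one from~\eqref{Groth charact}.

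For part~(c), recall that $(\mL(X;Y),\tau^\A)^* = \G^\A$ and, for the topology $\tau_p$ of uniform convergence on $p$-compact sets, a Hahn--Banach/bipolar argument identifies $(\mL(X;Y),\tau_p)^*$ with exactly the functionals $u=\sum y_n^*\otimes x_n$ where $(y_n^*)\in\ell_1(Y^*)$ and $(x_n)$ ranges over a $p$-compact set, i.e.\ with $\G_{\N^p}$ (using the description of $p$-compact sets as absolutely convex hulls of $\N^p$-null sequences, \cite[Theorem~1.1]{CaSt}). By parts~(a) and~(b), for $\A\in\{\N^p,\N_p^{dual},\I_p^{dual},\Ps_p^{dual}\}$ we have $\G^\A=\G_{\N^p}$ as subspaces of $Y^*\widehat\otimes_\pi X$ carrying the same duality, so $\tau_p$ and $\tau^\A$ are locally convex topologies with the \emph{same dual}. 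The bipolar theorem then gives $\overline{M}^{\tau_p}=\overline{M}^{\tau^\A}$ for every absolutely convex $M\subset\mL(X;Y)$. The only subtlety worth spelling out is verifying that the identification $(\mL(X;Y),\tau_p)^*=\G_{\N^p}$ is correct (both $\subset$ and $\supset$): the inclusion $\supset$ is by definition, and $\subset$ follows because a $\tau_p$-continuous functional is bounded on the polar of some $p$-compact set, which by the Carl--Stephani characterization lands it in $\G_{\N^p}$.
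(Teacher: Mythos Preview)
Your argument for part~(b) contains two concrete errors. First, taking the dual of an operator ideal does \emph{not} reverse inclusions: if $\A\subset\B$ then $T^*\in\A$ implies $T^*\in\B$, so $\A^{dual}\subset\B^{dual}$. Thus $\N_p\subset\I_p\subset\Ps_p$ gives $\N_p^{dual}\subset\I_p^{dual}\subset\Ps_p^{dual}$, the opposite of what you wrote. Second, the identity ``$\N_p^{dual}=\N^p$'' that you invoke is not true in general; what holds (see \cite[Theorem~18.1.6]{Pie}) is $(\N^p)^{reg}=\N_p^{dual}$, so one only has the inclusion $\N^p\subset\N_p^{dual}$. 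With the inclusions oriented correctly one obtains
\[
c_{0,\N^p}(X)\subset c_{0,\N_p^{dual}}(X)\subset c_{0,\I_p^{dual}}(X)\subset c_{0,\Ps_p^{dual}}(X),
\]
and the substantive step is the reverse inclusion $c_{0,\Ps_p^{dual}}(X)\subset c_{0,\N^p}(X)$. Your sketch for this (``Persson-type factorization on the relevant quotient'') is too vague to constitute a proof; the paper closes the loop by citing \cite[Corollary~3.4]{AO2}, which is a nontrivial result about $(p,r)$-null sequences, not something that drops out of a routine factorization.

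Your plan for part~(a) also leans on the false identity $\N_p^{dual}=\N^p$, so the ``pass the factorization across the tensor'' step is not justified as written. The paper instead defers to the argument of \cite[Theorem~2.7]{CK}, which handles the reshuffling directly for $\N^p$ without needing any dual-ideal identity. Your approach to part~(c) --- same dual space, hence same closed absolutely convex sets via the bipolar theorem --- is correct and matches the paper's reasoning; the paper simply cites \cite[p.~73]{DelPin2} for the identification of $\tau_p$-continuous functionals rather than redoing the Hahn--Banach argument you outline.
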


\begin{proof}
Statement (a) can be proved following the proof of \cite[Theorem~2.7]{CK}. Let us prove (b). It is well known that $\N_p \subset \I_p\subset \Ps_p$. Hence, $\N_p^{dual} \subset \I_p^{dual} \subset \Ps_p^{dual}$. We also have the inclusion $\N^p \subset \N_p^{dual}$. Indeed, by definition, $\N^p=\N_{(p,1,p)}$ and $\N_p=\N_{(p,p,1)}$, particular cases of general $(u,s,t)$-nuclear operators \cite[18.1.1]{Pie}. Since $\N_{(p,1,p)}^{reg}=\N_{(p,p,1)}^{dual}$ \cite[Theorem~18.1.6]{Pie}, $\N^p \subset \N_p^{dual}$ as claimed. The above inclusions immediately yield that
$$
c_{0,\N^p}(X)\subset c_{0,\N_p^{dual}}(X)\subset c_{0,{\I}_p^{dual}}(X)\subset c_{0,\Ps_p^{dual}}(X).
$$
The missing link $c_{0,\N^p}(X)=c_{0,{\Ps}_p^{dual}}(X)$ is provided by~\cite[Corollary~3.4]{AO2}. Finally, to show (c) we appeal to \cite[p.~73]{DelPin2} implying that the classes of $\tau_p$- and $\tau_{\N^p}$-continuous functionals on $\mathcal L(X;Y)$
coincide. Therefore, the result follows from (a) and (b).
\end{proof}

\begin{proposition}\label{weak BAP and p-AP sika}
Let $\A$ be $\N_p, \I_p$ or $\Ps_p$, $1\le p <\infty$. If a Banach space $X$ has the weak BAP for $\A$, then $X$ has the $p$-approximation property.
\end{proposition}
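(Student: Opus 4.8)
The plan is to route everything through the ``dual'' topology $\tau^{\A^{dual}}$ supplied by the omnibus characterization, and then cash this in via part (c) of Lemma~\ref{Pi-p y right Pi-p equals}, which identifies the $\tau^{\A^{dual}}$-closure of an absolutely convex set with its $\tau_p$-closure. First I would assume that $X$ has the weak $\lambda$-BAP for $\A$ for some $1\le\lambda<\infty$. By Theorem~\ref{Omnibus Thm} (the implication (i) $\Rightarrow$ (iv)), $X$ then has the $\lambda$-BAP for $\A$ and $\tau^{\A^{dual}}$. Applying Definition~\ref{def: BAP for A and tau} to any operator in $\A(X;Y)$ -- for instance to $Y=X$ and the zero operator, or to any nonzero rank-one operator, noting that $\F\subseteq\A$ for every operator ideal -- yields a net $(S_\alpha)$ in $\F(X)$ with $S_\alpha\to I_X$ in $\tau^{\A^{dual}}$. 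Hence $I_X\in\overline{\F(X)}^{\,\tau^{\A^{dual}}}$; the norm bound $\limsup_\alpha\|TS_\alpha\|_\A\le\lambda\|T\|_\A$ plays no role here, since the $p$-AP carries no constant.

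Next I would invoke Lemma~\ref{Pi-p y right Pi-p equals}(c). Since $\A$ is one of $\N_p, \I_p, \Ps_p$, the dual ideal $\A^{dual}$ is one of $\N_p^{dual}, \I_p^{dual}, \Ps_p^{dual}$, i.e.\ one of the ideals for which that lemma applies; and $\F(X)$, being a linear subspace of $\mL(X)$, is absolutely convex. Therefore $\overline{\F(X)}^{\,\tau^{\A^{dual}}}=\overline{\F(X)}^{\,\tau_p}$, so $I_X\in\overline{\F(X)}^{\,\tau_p}$. This is exactly the statement that $I_X$ can be approximated in $\tau_p$ by finite rank operators, that is, that $X$ has the $p$-approximation property, which finishes the proof.

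The one point where one genuinely needs more than the bare definition of the weak BAP is the passage to $\tau^{\A^{dual}}$-convergence. The definition only provides $\tau_s$-convergence, and since $\tau_s\subset\tau^{\A^{dual}}$, the $\tau_s$-closure of $\F(X)$ is a priori strictly larger than its $\tau^{\A^{dual}}$-closure; so one could not hope to land $I_X$ in $\overline{\F(X)}^{\,\tau_p}$ starting from pointwise convergence alone after applying Lemma~\ref{Pi-p y right Pi-p equals}(c). That is precisely the extra strength contributed by Theorem~\ref{Omnibus Thm}, equivalence (i) $\Leftrightarrow$ (iv). Everything else is routine.
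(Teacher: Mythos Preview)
Your proof is correct and essentially identical to the paper's: both invoke Theorem~\ref{Omnibus Thm} to pass from the weak $\lambda$-BAP for $\A$ to the $\lambda$-BAP for $\A$ and $\tau^{\A^{dual}}$, take $T=0$ in Definition~\ref{def: BAP for A and tau} to obtain $I_X\in\overline{\F(X)}^{\,\tau^{\A^{dual}}}$, and then apply Lemma~\ref{Pi-p y right Pi-p equals}(c) to replace $\tau^{\A^{dual}}$ by $\tau_p$.

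One small correction to your closing heuristic remark (which does not affect the proof itself): the inclusion $\tau_s\subset\tau^{\A^{dual}}$ is not what holds here --- the paper records $\tau_w\subset\tau^{\A^{dual}}\subset\tau_c$, and $\tau_s$ is not in general comparable to $\tau^{\A^{dual}}$. The point you want is rather that $\tau_w$ and $\tau_s$ have the same closed convex sets, so the $\tau_s$-closure of $\F(X)$ equals its $\tau_w$-closure, which (because $\tau_w\subset\tau^{\A^{dual}}$) may well be strictly larger than the $\tau^{\A^{dual}}$-closure. That is indeed why the passage through Theorem~\ref{Omnibus Thm} is genuinely needed.
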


\begin{proof}
Suppose that $X$ has the weak $\lambda$-BAP for $\A$ for some $\lambda$. By Theorem~\ref{Omnibus Thm}, $X$ has the $\lambda$-BAP for $\A$ and $\tau^{\A^{dual}}$. Then, $I_X\in \overline{\F(X)}^{\tau^{\A^{dual}}}$ (to see this just take $T=0$ in Definition~\ref{def: BAP for A and tau}). By the above lemma, $I_X \in \overline{\F(X)}^{\tau_p}$, which means that $X$ has the $p$-approximation property.
\end{proof}

As a consequence of the above proposition, for $p>2$ the local BAP for $\I_p$ differs from the weak BAP for $\I_p$. The same happens with the ideal $\N_p$.

\begin{proposition}\label{c* and weak BAP Ip}
Let $2<p<\infty$. There is a Banach space which has the local MAP for $\I_p$ and the
local MAP for $\N_p$, but lacks the weak BAP for $\I_p$ and the weak BAP for $\N_p$.
\end{proposition}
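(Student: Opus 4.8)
The plan is to exhibit a single Banach space witnessing all four claims, by combining Corollary~\ref{c^* I_p} with the failure of the $p$-approximation property from Proposition~\ref{weak BAP and p-AP sika}. For the local MAP part there is nothing to prove: by Corollary~\ref{c^* I_p}, \emph{every} Banach space has the local MAP for $\I_p$, and since $\N_p \subset \I_p$ with $\|\cdot\|_{\I_p} \le \|\cdot\|_{\N_p}$ (indeed $\N_p$ is minimal), Proposition~\ref{No Prop 5.5} gives that every Banach space also has the local MAP for $\N_p$. So the whole content lies in producing a space that fails both the weak BAP for $\I_p$ and the weak BAP for $\N_p$.

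By Proposition~\ref{weak BAP and p-AP sika}, it suffices to find a Banach space that fails the $p$-approximation property; such a space automatically lacks the weak BAP for $\I_p$ and for $\N_p$ simultaneously. The plan is therefore to invoke a known construction: for $2<p<\infty$ there exists a Banach space \emph{with} the classical approximation property that fails the $p$-approximation property. This is exactly the phenomenon behind the remark earlier in the paper (after Proposition~\ref{No Prop 5.5}) that the AP does not imply the local BAP for $\Ps_p$ when $p\ne 2$, which rests on Reinov's example \cite[Corollary~3.1]{Rei} of a space with the AP lacking the approximation property of order $q$ for $q\ne 2$; via Saphar's Theorem~\ref{Saphar_orig} and the $\tau_p$-description of the $p$-approximation property, such a space fails the $p$-AP. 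I would state this explicitly, cite the relevant source (Reinov, or alternatively the $p$-compact literature cited in the excerpt such as \cite{DPS_dens} or \cite{SiKa}), and then conclude.

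The argument then runs: let $X$ be such a space, failing the $p$-approximation property for the given $p\in(2,\infty)$. By the contrapositive of Proposition~\ref{weak BAP and p-AP sika} applied with $\A=\I_p$ and with $\A=\N_p$, $X$ has neither the weak BAP for $\I_p$ nor the weak BAP for $\N_p$. On the other hand, $X$ has the local MAP for $\I_p$ by Corollary~\ref{c^* I_p}, and the local MAP for $\N_p$ by Proposition~\ref{No Prop 5.5} (as $\N_p$ is a minimal Banach operator ideal). This gives the desired separation and completes the proof.

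The main obstacle is purely a matter of locating and correctly citing a space that fails the $p$-approximation property for every fixed $p\in(2,\infty)$; once that input is in hand, everything else is an immediate assembly of results already proved in the paper. One subtlety to be careful about: the cited existence result should either work for each individual $p>2$, or one should note that failing the approximation property of order $q$ for the relevant $q$ forces failure of the $p$-approximation property via the identification of $p$-compact sets with $\N^p$-compact sets and the $\tau_p$-reformulation used in Lemma~\ref{Pi-p y right Pi-p equals} and Proposition~\ref{weak BAP and p-AP sika}. I would spell out that bridge in one sentence rather than leaving it implicit.
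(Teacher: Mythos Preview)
Your overall structure matches the paper's proof exactly: local MAP for $\I_p$ from Corollary~\ref{c^* I_p}, local MAP for $\N_p$ from Proposition~\ref{No Prop 5.5} (since $\N_p$ is minimal), and failure of the weak BAP for both ideals from Proposition~\ref{weak BAP and p-AP sika} once you have a space lacking the $p$-approximation property.

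The one genuine weakness is in how you source that space. The paper cites Reinov~\cite[Theorem~5.3,1]{Rei3}, which \emph{directly} produces, for each $p>2$, a Banach space failing the Sinha--Karn $p$-approximation property; no bridge is needed. Your proposed route through \cite[Corollary~3.1]{Rei} gives a space failing the Saphar approximation property of order $q$ for $q\ne 2$, but this is a different notion from the $p$-approximation property (uniform approximation of $I_X$ on $p$-compact sets), and the ``one sentence'' you promise to write linking them is not available from the results in the paper: Saphar's Theorem~\ref{Saphar_orig} connects the BAP of order $p^*$ to the local BAP for $\Ps_p$, not to the $\tau_p$-density of $\F(X)$, and Lemma~\ref{Pi-p y right Pi-p equals} does not supply that implication either. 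So either cite \cite{Rei3} (or an equivalent direct source from the $p$-compact literature) as the paper does, or be prepared to actually prove that failure of the AP of order $q$ forces failure of the $p$-approximation property for the relevant indices --- which is not a throwaway remark.
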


\begin{proof}
Fix $2<p<\infty$. Reinov's result \cite[Theorem~5.3,1]{Rei3} clearly implies that there exists
a Banach space $X$ that fails the $p$-approximation property. Now, by Proposition~\ref{weak BAP and p-AP sika}, $X$ fails to have the weak BAP for $\I_p$ and the weak BAP for $\N_p$. On the other hand, by Corollary~\ref{c^* I_p} and Proposition~\ref{No Prop 5.5}, $X$ has the local MAP for $\I_p$ and local MAP for $\N_p$ for all $p$.
\end{proof}

\subsection*{Acknowledgements} We are grateful to Joe Diestel for helpful and valuable conversations. S. Lassalle and P. Turco were supported in part by CONICET PIP 0624 and PIP 0483, PICT 2011-1456, UBACyT 1-746 and 1-474. The research of E. Oja was partially supported by Estonian Science Foundation Grant 8976 and by institutional research funding IUT20-57 of the Estonian Ministry of Education and Research.

\end{document}